\newcommand{\adef}{\begin{defn}}
\newcommand{\zdef}{\end{defn}}
\newtheorem{defn}{Definition}
\newcommand {\N}{\mathbb N}
\newcommand {\R}{\mathbb R}
\newcommand {\C}{\mathbb C}
\newcommand {\U}{\mathscr U}
\newtheorem{theorem}{Theorem}[section] % 1st argument is your name for it
\newtheorem{lemma}[theorem]{Lemma}     % 2nd argument is what is printed
\newtheorem{corollary}[theorem]{Corollary}
\newtheorem{proposition}[theorem]{Proposition}
\newtheorem{defi} [theorem]{Definition}
\newtheorem{fact}[theorem]{Fact}
\newcommand\dist{\mathop{\mathrm{dist}}\nolimits}
\newcommand{\aproof}{\begin{proof}}
\newcommand{\zproof}{\end{proof}}
\title[On UFO Banach spaces]{On Uniformly finitely extensible Banach spaces}
\author{Jes\'us M. F. Castillo, Valentin Ferenczi and Yolanda Moreno}
\thanks{46B03, 46B07, 46B08, 46M18, 46B25, 46B42.}
\thanks{This research has been supported in part by project
MTM2010-20190-C02-01 and the program Junta de Extremadura GR10113
IV Plan Regional I+D+i, Ayudas a Grupos de Investigaci\'on, and
also by CNPQ projeto 455687/2011-0 }
\begin{document}

\maketitle

\begin{abstract} We continue the study of Uniformly Finitely
Extensible Banach spaces (in short, UFO) initiated in
Moreno-Plichko, \emph{On automorphic Banach spaces}, Israel J.
Math. 169 (2009) 29--45 and Castillo-Plichko, \emph{Banach spaces
in various positions.} J. Funct. Anal. 259 (2010) 2098-2138. We
show that they have the Uniform Approximation Property of Pe\l
czy\'nski and Rosenthal and are compactly extensible. We will also
consider their connection with the automorphic space problem of
Lindenstrauss and Rosenthal --do there exist automorphic spaces
other than $c_0(I)$ and $\ell_2(I)$?-- showing that a space all
whose subspaces are UFO must be automorphic when it is
Hereditarily Indecomposable (HI), and a Hilbert space when it is
either locally minimal or isomorphic to its square. We will
finally show that most HI --among them, the super-reflexive HI
space constructed by Ferenczi-- and asymptotically $\ell_2$ spaces
in the literature cannot be automorphic.
\end{abstract}

\maketitle

\section{Introduction and preliminaries}

For all the unexplained notation and terms see the second half of this section.
In this paper we continue the study of Uniformly Finitely
Extensible Banach spaces (in short, UFO) initiated in
\cite{castplic,moreplic} and their connection with the automorphic
space problem \cite{castmoreisr,lindrose,avimore}. Following \cite{castmoreisr}, a Banach space is said to be
\emph{automorphic} if  every isomorphism between two subspaces
such that the corresponding quotients have the same density
character can be extended to an automorphism of the whole space.
Equivalently, if for every closed subspace $E\subset X$ and every
into isomorphism $\tau:E\to X$ for which $X/E$ and $X/\tau E$ have
the same density character, there is an automorphism $T$ of $X$
such that $T|_E=\tau$. The motivation for such definition is in
the Lindenstrauss-Rosenthal theorem \cite{lindrose} asserting that
$c_0$ is automorphic and the extension presented in
\cite{moreplic} -- for every $\Gamma$ the space $c_0(\Gamma)$ is
automorphic--. This leads to the generalization of the still open
problem set by Lindenstrauss and Rosenthal \cite{lindrose}\\

\noindent \textbf{Automorphic space problem}: Does there exist an
automorphic space different from $c_0(I)$ or
 $\ell_2(I)$?\\

The papers \cite{avimore,moreplic} and \cite{castplic} considered
different aspects of the automorphic space problem. In particular,
the following two groups of notions were isolated:

\adef  A couple $(Y, X)$ of Banach spaces is said to be ({\it compactly}) {\it extensible} if for
every subspace $E \subset Y$ every (compact) operator $\tau: E \to X$ can be
extended to an operator $T: Y \to X$. If there is a $\lambda>0$
such that some extension exists
verifying $\|T\|\leq \lambda\|\tau\|$ then we will
say that $(Y, X)$ is $\lambda$-(compactly) extensible. The space $X$ is said to be
({\it compactly}) {\it extensible} if $(X, X)$ is ({\it compactly}) extensible and {\it uniformly} ({\it compactly}) {\it extensible} if it is $\lambda$-(compactly) extensible for some $\lambda$.\zdef

It is not known whether there exist separable extensible spaces
different from $c_0$ and $\ell_2$. Neither it is known if an
extensible space must be uniformly extensible, although some
partial results have already been obtained in \cite{moreplic} and
\cite{castplic}; precisely, that an extensible space isomorphic to
its square is uniformly extensible.

\adef A couple $(Y,X)$ of Banach spaces is said to be a
$\lambda$-{\it UFO pair} if for every finite dimensional subspace
$E$ of $Y$ and every linear operator $\tau:E\to X$, there exists a
linear continuous extension $T:Y\to X$ with $\|T\|\leq
\lambda\|\tau\|$. A couple $(Y,X)$ of Banach spaces is said to be
an {\it UFO pair} if it is a $\lambda$-UFO pair for some
$\lambda$. A Banach space $X$ is said to be {\it Uniformly
Finitely Extensible} (an UFO, in short) if $(X,X)$ is an UFO pair.
It is said to be a $\lambda$-UFO if $(X,X)$ is a $\lambda$-UFO
pair.\zdef

It is clear that every
$\mathcal L_{\infty, \lambda}$-space is a $\lambda$-UFO. Recall
that a subspace $Y$ of a Banach space $X$ is said to be locally
complemented if $Y^{**} = Y^{\perp\perp}$ is complemented in
$X^{**}$. Some acquaintance with ultraproduct theory will be
required: Let $I$ be a set, $\U$ be an ultrafilter on $I$, and
$(X_i)$ a family of Banach spaces. Then $ \ell_\infty(X_i)$
endowed with the supremum norm, is a Banach space, and $
c_0^\U(X_i)= \{(x_i) \in \ell_\infty(X_i) : \lim_{\U(i)}
\|x_i\|=0\} $ is a closed subspace of $\ell_\infty(X_i)$. The
ultraproduct of the spaces $X_i$ following $\U$ is defined as the
quotient
$$
[X_i]_\U = {\ell_\infty(X_i)}/{c_0^\U(X_i)}.
$$
We denote by $[(x_i)]$ the element of $[X_i]_\U$ which has the
family $(x_i)$ as a representative. It is not difficult to show
that $ \|[(x_i)]\| = \lim_{\U(i)} \|x_i\|. $ In the case $X_i = X$
for all $i$, we denote the ultraproduct by $X_\U$, and call it the
ultrapower of $X$ following $\U$. The following lemma gathers the
basic results on UFO spaces from \cite{moreplic} (results (i, ii))
 and \cite{castplic} (results (iii, iv).

\begin{lemma}\label{minga} $\;$ \begin{description}
\item[i)] Every compactly extensible space is an UFO. \item[ii)] Every
$\lambda$-UFO that is $\mu$-complemented in its bidual is
$\lambda\mu$-extensible. \item[iii)] A locally complemented subspace of
an UFO is an UFO. \item[iv)] Ultrapowers of $\lambda$-UFO are
$\lambda$-UFO; consequently, biduals of $\lambda$-UFO are
$\lambda$-extensible.
\end{description}
\end{lemma}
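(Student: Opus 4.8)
For (i), the extensibility part is immediate: any operator with finite dimensional domain has finite rank, hence is compact, so the compact extensibility of $(X,X)$ already guarantees that every $\tau\colon E\to X$ with $\dim E<\infty$ extends to some operator on $X$. The substance is the \emph{uniform} bound, and I would obtain it by contradiction. If no constant worked, pick finite dimensional $E_n\subseteq X$ and $\tau_n\colon E_n\to X$ with $\|\tau_n\|\le 1$ and $\rho_n=\inf\{\|T\|:T\colon X\to X,\ T|_{E_n}=\tau_n\}\to\infty$. Choosing weights $w_n\to0$ with $w_n\rho_n\to\infty$ (say $w_n=\rho_n^{-1/2}$), I would glue the rescaled data into a single compact operator $\sigma\colon E\to X$ on $E=\overline{\lin}\bigcup_n E_n$ with $\sigma|_{E_n}=w_n\tau_n$. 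Compact extensibility then yields $S\colon X\to X$ extending $\sigma$, whence $w_n^{-1}S|_{E_n}$ extends $\tau_n$ with norm $\le w_n^{-1}\|S\|=o(\rho_n)$, contradicting the definition of $\rho_n$. The main obstacle is precisely the construction of $\sigma$: one must arrange that the weighted pieces assemble into a \emph{well-defined compact} operator even though the $E_n$ need not be in direct sum inside $X$, and it is here --- rather than in the mere existence of extensions --- that the compactness hypothesis does the essential work.

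For (ii), fix $\tau\colon E\to X$ on an arbitrary closed subspace $E\subseteq X$, normalised to $\|\tau\|=1$, and let $P\colon X^{**}\to X$ be a projection with $\|P\|\le\mu$. The plan is first to manufacture, from the $\lambda$-UFO property, a norm-$\le\lambda$ extension of $\tau$ taking values in the bidual, and then to project back. Direct the finite dimensional subspaces $F\subseteq E$ by inclusion; for each such $F$, view $\tau|_F\colon F\to X$ as an operator with finite dimensional domain inside $X$ and use the $\lambda$-UFO property to extend it to $T_F\colon X\to X$ with $\|T_F\|\le\lambda$. Along an ultrafilter refining the order filter on these $F$'s, let $\widetilde T(x)$ be the weak-$*$ limit of $T_F(x)$ in $X^{**}$ for each $x\in X$; this defines a linear operator $\widetilde T\colon X\to X^{**}$ with $\|\widetilde T\|\le\lambda$, and for $x\in E$ one has $T_F(x)=\tau(x)$ for all $F\ni x$, so $\widetilde T$ extends $\tau$ and in particular maps $E$ into $X$. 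Finally $T:=P\circ\widetilde T\colon X\to X$ satisfies $\|T\|\le\mu\lambda$ and $T|_E=P\circ\tau=\tau$, because $\tau(E)\subseteq X$ and $P$ restricts to the identity on $X$. There is no serious obstacle here beyond checking that the pointwise weak-$*$ limit is linear and bounded, which is routine.

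For (iii), let $Y$ be locally complemented in the $\lambda$-UFO space $X$, so that $Y^{\perp\perp}=Y^{**}$ is complemented in $X^{**}$; fix a projection $\pi\colon X^{**}\to Y^{**}$ with $\|\pi\|\le\nu$, and note that its restriction $\pi|_X\colon X\to Y^{**}$ fixes $Y$ pointwise. Given a finite dimensional $E\subseteq Y$ and $\tau\colon E\to Y$, regard $\tau$ as an operator into $X$ and extend it, using the $\lambda$-UFO property of $X$, to $T\colon X\to X$ with $\|T\|\le\lambda\|\tau\|$. Then $\pi|_X\circ T|_Y\colon Y\to Y^{**}$ is an extension of $\tau$ of norm $\le\nu\lambda\|\tau\|$ whose values on $E$ already lie in $Y$; this shows that $(Y,Y^{**})$ is a $\nu\lambda$-UFO pair. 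To descend from $Y^{**}$ to $Y$ I would invoke the principle of local reflexivity on each finite dimensional piece of the range together with the fact that the prescribed values on the fixed subspace $E$ already sit in $Y$, correcting the bidual-valued extension into a genuinely $Y$-valued one at the cost of an arbitrarily small factor $1+\epsilon$; since the UFO constant is allowed to be any uniform number, this yields that $Y$ is a $\nu\lambda$-UFO. This descent is the main obstacle, and it is exactly here that local (rather than global) complementation must be used to full strength.

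For (iv), take a finite dimensional $E\subseteq X_\U$ and $\tau\colon E\to X_\U$ with $\|\tau\|\le 1$, and fix $\epsilon>0$. Using that finite dimensional subspaces of an ultrapower are finitely representable in $X$ (choosing representatives of a basis of $E$ and of the images $\tau e_k$), I would produce, for $\U$-almost every $i$, finite dimensional $E_i\subseteq X$ and operators $\tau_i\colon E_i\to X$ with $\|\tau_i\|\le 1+\epsilon$ whose coordinatewise assembly recovers $E$ and $\tau$ in the quotient $[X]_\U$. Applying the $\lambda$-UFO property of $X$ coordinatewise gives $T_i\colon X\to X$ with $T_i|_{E_i}=\tau_i$ and $\|T_i\|\le\lambda(1+\epsilon)$, and the rule $T([x_i])=[T_ix_i]$ defines a bounded operator $T\colon X_\U\to X_\U$ of norm $\le\lambda(1+\epsilon)$ extending $\tau$; letting $\epsilon\to0$ (absorbing the slack into the $\U$-limit) yields $\|T\|\le\lambda$, so $X_\U$ is $\lambda$-UFO. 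For the consequence, recall that $X^{**}$ is isometric to a norm-one complemented subspace of a suitable ultrapower $X_\U$. Since $X_\U$ is $\lambda$-UFO and a $1$-complemented subspace of a $\lambda$-UFO is again $\lambda$-UFO (extend in the large space and compose with the norm-one projection), $X^{**}$ is $\lambda$-UFO; being a dual space it is $1$-complemented in its own bidual, so part (ii) applied with $\mu=1$ shows that $X^{**}$ is $\lambda$-extensible. The main technical point in (iv) is the faithful coordinatewise representation of $E$ and $\tau$ with exact agreement in the quotient, for which the principle of local reflexivity in ultrapowers is the right tool.
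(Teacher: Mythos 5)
Your parts (ii) and (iv) are essentially correct and match the standard route. The paper does not in fact prove Lemma \ref{minga} in-text (it cites \cite{moreplic} for (i)--(ii) and \cite{castplic} for (iii)--(iv)); its only displayed argument is the sketch of Lemma \ref{extbidual}, which is precisely your (ii): restrict $\tau$ to the directed family of finite-dimensional subspaces, extend each restriction with norm $\le\lambda\|\tau\|$, take the weak$^*$ limit along an ultrafilter refining the order filter, and compose with a norm-$\mu$ projection $X^{**}\to X$. Your (iv) is also sound: the coordinatewise representation of $(E,\tau)$ needs only a finite $\epsilon$-net of the sphere of $E$ and the convergence of norms along $\mathscr U$ (the principle of local reflexivity is not needed for this part, only for the bidual consequence), and to remove the $(1+\epsilon)$ slack one should say explicitly that the representatives can be chosen with $\lim_{\mathscr U}\|\tau_i\|=\|\tau\|$, so that $\|T\|\le\lim_{\mathscr U}\|T_i\|\le\lambda\lim_{\mathscr U}\|\tau_i\|=\lambda\|\tau\|$. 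The reduction of ``biduals are $\lambda$-extensible'' to (ii) with $\mu=1$, via the fact that $X^{**}$ is $1$-complemented in a suitable ultrapower, is correct and is the argument of \cite{castplic}.

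Part (i) has a genuine gap: the step you yourself flag as ``the main obstacle'' is the entire content of the result, and you do not carry it out. To glue the rescaled bad pairs into one compact operator you must (a) show that failure of the UFO property survives cutting $E_n$ down by the finitely many functionals already used at earlier stages --- a rank-one-correction argument shows the extension constant degrades by a factor depending on the codimension, which forces you to pass to a subsequence with $\rho_n$ growing fast enough \emph{after} the degradation is fixed --- and (b) arrange the resulting subspaces to form a finite-dimensional decomposition of their closed span with uniformly bounded projections $P_n$, so that $\sigma=\sum_n w_n\tau_n P_n$ is well defined, choosing $(w_n)$ summable against the decomposition constant so that $\sigma$ is a norm limit of finite-rank operators, hence compact. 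None of this is routine; it is exactly the machinery behind \cite[Thm.~4.4]{moreplic}, which the present paper invokes elsewhere for the same purpose, and without it ``glue into a single compact operator'' is a statement of the problem rather than a proof.

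Part (iii) also fails as written at the descent step. ``Invoke the principle of local reflexivity on each finite dimensional piece of the range'' presupposes that the $Y^{**}$-valued extension has finite-dimensional range, but the extension $T:X\to X$ furnished by the UFO property is merely bounded, so $\pi|_X\circ T|_Y$ may have infinite-dimensional range in $Y^{**}$, and PLR does not apply to it. The repair is the compression device the paper uses in Lemma \ref{genprop}: given $\tau:E\to Y$, pick norming functionals for an $\epsilon$-net of the sphere of $E$, set $H=\bigcap_i\ker f_i$ (finite codimension in $Y$, $E\cap H=0$, projection $p:E\oplus H\to E$ of norm $\le(1-\epsilon)^{-1}$), and extend $\tau p$ --- using that $(Y,X)$ is trivially a $\lambda$-UFO pair, hence $(Y,X^{**})$ is $\lambda$-extensible by your own ultrafilter argument from (ii) --- to an operator $Y\to X^{**}$ that vanishes on $H$ and therefore has finite rank; after projecting into $Y^{**}$ by local complementation, PLR legitimately yields a $Y$-valued extension of $\tau$ of norm at most $(1+\epsilon)(1-\epsilon)^{-1}\nu\lambda\|\tau\|$. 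With this insertion your (iii) becomes correct; without it, the place where you say local complementation ``must be used to full strength'' is exactly where the argument is missing.
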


The spaces $Y$ for which $(Y,\ell_p)$ is an UFO pair were
investigated in \cite{CasazzaNi} under the name $M_p$-{\it
spaces}, and Maurey's extension theorem (see e.g. \cite{Z}) can be
reformulated in this language as: \emph{Each type 2 space is
$M_2$}. Related to this is the so-called \emph{Maurey extension
property} (in short MEP): a Banach space $X$ has MEP if every
operator $t: E\to \ell_2$ from any subspace $E$ of $X$ admits an
extension to $T:X\to \ell_2$. The equivalence between $M_2$ and
MEP should be known but we have been unable to find any explicit
reference. The result follows from the following generalization(s)
of Lemma 1.1 (ii):

\begin{lemma}\label{extbidual}
If the pair $(Y, X)$ is $\lambda$-UFO then $(Y, X^{**})$ is
$\lambda$-extensible. Therefore, if $X$ is complemented in its
bidual, the pair $(Y, X)$ is an UFO if and only if $(Y, X)$ is
extensible.
\end{lemma}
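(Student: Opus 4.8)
The essential content is the first sentence; the stated equivalence is a short corollary. The plan is to reduce, at the level of finite-dimensional subspaces, an operator into $X^{**}$ to one into $X$ by the principle of local reflexivity, to extend the latter by the $\lambda$-UFO hypothesis, and then to recover an $X^{**}$-valued extension by a weak$^{*}$ limit along an ultrafilter.

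Fix a closed subspace $E\subseteq Y$ and an operator $\tau:E\to X^{**}$; after rescaling assume $\|\tau\|=1$. Let $\mathcal I$ be the set of triples $\alpha=(F,N,\eps)$ with $F\subseteq E$ and $N\subseteq X^{*}$ finite dimensional and $\eps>0$, partially ordered by $(F,N,\eps)\le(F',N',\eps')$ when $F\subseteq F'$, $N\subseteq N'$ and $\eps\ge\eps'$; this order is directed. For each $\alpha$ apply local reflexivity to the finite dimensional space $\tau(F)\subseteq X^{**}$, to $N\subseteq X^{*}$ and to $\eps$: this yields an operator $S_\alpha:\tau(F)\to X$ with $\|S_\alpha\|\le 1+\eps$ and $g(S_\alpha\xi)=\xi(g)$ for all $\xi\in\tau(F)$ and $g\in N$. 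Then $S_\alpha\circ\tau|_F:F\to X$ has norm at most $1+\eps$, so the $\lambda$-UFO property of $(Y,X)$ provides an extension $T_\alpha:Y\to X$ of $S_\alpha\circ\tau|_F$ with $\|T_\alpha\|\le\lambda(1+\eps)$.

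Fix an ultrafilter $\U$ on $\mathcal I$ refining the order filter, so that every tail $\{\beta:\beta\ge\alpha\}$ lies in $\U$, and set $T(y)=\lim_{\U(\alpha)}T_\alpha(y)$, the limit taken in the weak$^{*}$ topology of $X^{**}$. Along $\U$ the net $(T_\alpha y)$ eventually lies in a fixed ball of $X^{**}$, which is weak$^{*}$ compact, so the limit exists; linearity of $T$ follows from that of each $T_\alpha$ and of weak$^{*}$ limits. Weak$^{*}$ lower semicontinuity of the norm together with $\eps\to 0$ along $\U$ gives $\|T(y)\|\le\lim_{\U}\|T_\alpha y\|\le\lambda\|y\|$, so $\|T\|\le\lambda$. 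To check that $T$ extends $\tau$, fix $x\in E$ and $g\in X^{*}$; for every $\alpha$ with $x\in F$ and $g\in N$ (a tail, hence in $\U$) we have $T_\alpha x=S_\alpha(\tau x)$, whence $g(T_\alpha x)=(\tau x)(g)$ by the duality clause of local reflexivity. Passing to the $\U$-limit yields $(Tx)(g)=(\tau x)(g)$, and since $g$ is arbitrary, $Tx=\tau x$. Hence $(Y,X^{**})$ is $\lambda$-extensible.

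For the equivalence, let $X$ be $\mu$-complemented in $X^{**}$ by a projection $P$. If $(Y,X)$ is $\lambda$-UFO, then for any $\tau:E\to X\subseteq X^{**}$ the extension $\widetilde T:Y\to X^{**}$ produced above composes with $P$ to give $P\widetilde T:Y\to X$, which extends $\tau$ (as $P$ fixes $X\supseteq\tau(E)$) with norm at most $\lambda\mu\|\tau\|$; thus $(Y,X)$ is $\lambda\mu$-extensible, in particular extensible. Conversely, an extensible pair is in particular compactly extensible, hence an UFO by Lemma \ref{minga}(i). The crux of the argument is the first assertion, and within it the delicate point is that the finite dimensional surrogate $S_\alpha\circ\tau|_F$ of $\tau|_F$ must be compatible with the weak$^{*}$ limit; this is exactly what the relation $g(S_\alpha\xi)=\xi(g)$ secures, and it is what forces $T$ to restrict to $\tau$ rather than to some other operator.
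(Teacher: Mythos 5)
Your proof is correct, and its skeleton coincides with the paper's: restrict the operator to finite-dimensional subspaces of $E$, extend each restriction by the $\lambda$-UFO hypothesis, and glue the pieces through a weak$^{*}$ ultrafilter limit in $X^{**}$. But you prove more than the paper's sketch does. The paper starts from an operator $\phi\colon E\to X$, so its argument literally establishes only that $X$-valued operators on subspaces of $Y$ admit $X^{**}$-valued extensions with constant $\lambda$; what the statement ``$(Y,X^{**})$ is $\lambda$-extensible'' formally requires, and what you prove, is the extension of arbitrary operators $\tau\colon E\to X^{**}$. The extra ingredient is your use of the principle of local reflexivity on $\tau(F)$: the maps $S_\alpha$ furnish $X$-valued surrogates of $\tau|_F$ to which the UFO hypothesis can be applied, and the duality clause $g(S_\alpha\xi)=\xi(g)$ is exactly what forces the weak$^{*}$ limit to restrict to $\tau$ on $E$ rather than to a perturbation of it. (The paper does use local reflexivity, but later and in the opposite direction --- in Lemmas \ref{genprop} and \ref{casoBAP}, to descend from $X^{**}$ back into $X$; for those applications the $X$-valued case of Lemma \ref{extbidual} suffices, which is presumably why the sketch is content with it.) Your treatment of the equivalence is also the paper's, including the same small liberty: Lemma \ref{minga}(i) is stated for spaces, i.e.\ for pairs $(X,X)$, so invoking it for the pair $(Y,X)$ is strictly an appeal to ``the same proof works for pairs'' --- which is precisely the appeal the paper itself makes.
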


\noindent \emph{Sketch of proof}. Assume that $(Y, X)$ is
$\lambda$-UFO. Let $E$ be an infinite dimensional subspace of $Y$
and $\phi: E\to X$ an operator. For each finite dimensional
subspace $F\subset E$ let $\phi_F$ be the restriction of $\phi$ to
$F$ and then let $\Phi_F$ be its extension to $Y$ verifying
$\|\Phi_F\|\leq \lambda\|\phi_F\|$, which exists by hypothesis.
Let FIN the partially ordered set of all finite dimensional
subspaces of $E$ and let $\mathscr U$ be an ultrafilter refining
the Fr\'echet filter. Let $\psi: Y\to X^{**}$ be defined as
$$\psi(x) =\mathrm{weak}^*-\lim_{\mathscr U(F)} \Phi_F(x)).$$
The proof that if $(Y, X)$ is extensible then $(Y, X)$ is an UFO
pair is similar to that of Lemma 1.1 (i).\hfill $\square$

\begin{corollary} Properties $M_2$ and $MEP$ are equivalent.
\end{corollary}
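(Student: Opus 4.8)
The plan is to recognize that the corollary is a direct specialization of Lemma~\ref{extbidual} to the case $X=\ell_2$, combined with the fact that $\ell_2$ is reflexive (hence trivially complemented in its bidual). The only real content is to match up the definitions: $M_2$ means that $(Y,\ell_2)$ is an UFO pair for every space $Y$, while $MEP$ means that $(Y,\ell_2)$ is extensible for every $Y$, where here $Y$ ranges over all Banach spaces (or equivalently, $X$ has $MEP$ iff $(X,\ell_2)$ is extensible, reading the property intrinsically). So the corollary is asking us to show these two conditions coincide.

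First I would apply Lemma~\ref{extbidual} with $X=\ell_2$. Since $\ell_2$ is reflexive, it is $1$-complemented in its bidual $\ell_2^{**}=\ell_2$, so the hypothesis ``$X$ complemented in its bidual'' is automatically satisfied. The lemma then yields directly that $(Y,\ell_2)$ is an UFO pair if and only if $(Y,\ell_2)$ is extensible. This is precisely the equivalence of $M_2$ and $MEP$ once the terminology is unwound: being $M_2$ is being an UFO pair into $\ell_2$, and having $MEP$ is being extensible into $\ell_2$.

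The one point requiring a word of care is the quantifier over subspaces versus the quantifier over the ambient space $Y$. In the definition of $M_p$-space (and of $MEP$) the property is stated as: \emph{every operator from a subspace $E$ of $X$ into $\ell_2$ extends}. In the UFO-pair language this is the statement that $(X,\ell_2)$ is an UFO pair, and extensibility of $(X,\ell_2)$ is exactly $MEP$ for $X$. Lemma~\ref{extbidual} is stated for a fixed pair $(Y,X)$, so applying it with $Y=X$ (the space under consideration) and $X=\ell_2$ gives the equivalence for that single space. Since this holds for an arbitrary Banach space, the two classes of spaces coincide.

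The main (indeed only) obstacle is a presentational one rather than a mathematical one: one must confirm that the reflexivity of $\ell_2$ supplies the ``$\mu$-complemented in its bidual'' hypothesis with $\mu=1$, and that there is no hidden gap between the uniform bound appearing in the UFO/$\lambda$-UFO definition and the non-uniform extensibility demanded by $MEP$. Since Lemma~\ref{extbidual} already absorbs the passage from the finite-dimensional uniform bound to extensibility via the ultrafilter weak${}^*$-limit argument, and reflexivity collapses $X^{**}$ back to $X$, no uniformity is lost and the equivalence is immediate.
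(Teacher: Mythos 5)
Your proposal is correct and is exactly the paper's argument: the corollary is stated as an immediate consequence of Lemma~\ref{extbidual}, applied with target space $\ell_2$, whose reflexivity supplies the ``complemented in its bidual'' hypothesis, so that $(X,\ell_2)$ is an UFO pair if and only if it is extensible. Your unwinding of the definitions of $M_2$ and $MEP$ into the UFO-pair/extensible-pair language matches the paper's intent precisely.
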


The contents of the paper are as follows. \\

In section 2 we show that UFO spaces have the Uniform
Approximation Property, as a consequence of the following general
principle: If there is a constant $C$ so that all $X$-valued norm
one finite-rank operators defined on subspaces of $X$ admit
extensions to $X$ with norm at most $C$ then for every
$\varepsilon> 0$ they admit finite-rank extensions with norm at
most $(C+\varepsilon)$. As a consequence we show that when $X$ is
an UFO, $X$-valued compact operators defined on its subspaces can
be uniformly extended to the whole space, and the extension
operator can even be chosen to be compact. This solves in the
affirmative the question posed in \cite{moreplic} of whether UFO
and compactly extensible spaces coincide. The corresponding
question for all operators (i.e., whether extensible implies
uniformly extensible) is open. The following diagram displays the basic implications:

\begin{eqnarray*} \mathrm{automorphic} \Rightarrow \mathrm{unif.} \; \mathrm{extensible} \Rightarrow \mathrm{extensible} \Rightarrow \mathrm{compactly} \; \mathrm{extensible}\\
\Leftrightarrow\\
\mathrm{unif.} \; \mathrm{compactly} \; \mathrm{extensible}\\
\Leftrightarrow\\
\mathrm{UFO.}
\end{eqnarray*}

It was obtained in \cite{castplic} --see also Theorem \ref{dicho} below-- the dichotomy principle asserting that an UFO must be either i) an
$\mathcal L_\infty$-space or ii) a B-convex near-Hilbert space (i.e.,
\cite{castplic}, a space $X$ such that $p(X)=q(X)=2$ where $p(X)=\sup\{p:X\;{\rm
is\,of\,type}\;p\}\;\; \mathrm{and}\;\; q(X)=\inf\{q:X\;{\rm
is\,of\,cotype}\;q\}$) with the Maurey Extension Property, called B-UFO from now on. In section 3 we focus on classifying B-UFO in the presence of some
additional properties; mainly: to be isomorphic to its square or
to be Hereditarily Indecomposable (HI from now on), which somehow
are properties at the two ends of the spectrum. For instance, we
will show that a space all whose subspaces are UFO must be
automorphic when it is HI, and a Hilbert space when it is either
isomorphic to its square or locally minimal. Among other stability
properties of the class of UFO spaces, we prove that $X\oplus
\ell_2$ is B-UFO when $X$ is B-UFO, however it still remains
unsolved the question whether the product of two B-UFO spaces must
be be B-UFO. Actually, a positive answer to that question would
imply that every hereditarily UFO is Hilbert. Although the
quotient of two UFO does not have to be UFO, we prove that when
$Y$
is locally complemented in an UFO space $X$ then $X/Y$ is UFO. The following table could help the reader:\\

 \begin{tabular}{|l|l|l|}

    \hline

    Type       & $\mathcal L_\infty$                             & $B$-convex                                 \\

    \hline

    UFO                    & basis    &   near Hilbert, MEP, UAP\\

   UFO +  HI                   & exist    & ? \\

 UFO minimal & $c_0$    &   superreflexive \\

    \hline

    HUFO & do not exist  & exist \\

 HUFO locally minimal & do not exist  & Hilbert \\

 HUFO isomorphic to square  & do not exist & Hilbert\\
    \hline
  \end{tabular}

  \vspace{0,5cm}

%The positive contributions to the automorphic space problem we present consist
%is showing that hereditarily UFO which are either locally minimal
%or isomorphic to their square are Hilbert spaces. We will also
%show that HI spaces which are moreover UFO are ``close" to be
%automorphic.

Section 4 examines possible counterexamples to the automorphic
space problem. Indeed, while the papers \cite{moreplic} (resp.
\cite{avimore}) showed that most of the currently known Banach
spaces (resp. $C(K)$-spaces) cannot be automorphic, a work
continued in \cite{castplic}, in this paper we turn our attention
to HI or asymptotically $\ell_2$ spaces to show that most of the
currently known examples cannot be automorphic. Special attention
is paid to the only HI space which is candidate to be automorphic: the
super-reflexive HI space $\mathcal F$ constructed by the second
author in \cite{F1}, for which we show it is not UFO since it is
not near-Hilbert (see Prop. \ref{type}). The concluding
Section 5 contains the technical results required to prove this last assertion.\\

\section{Compactly extensible spaces}

It is known that extensible spaces are UFO (it follows from Lemma
1.1 (i)) and that UFO spaces are not necessarily extensible (say,
$C[0,1]$). The question of whether UFO spaces must be compactly
extensible was posed in \cite{moreplic} and will be affirmatively
solved in Proposition \ref{compext} below. Moreover, we will show
that compactly extensible spaces are uniformly compactly
extensible. This is remarkable since, as we have already
mentioned, it is an open question whether an extensible space must
be uniformly extensible. The following general principle is the
key:

\begin{lemma}\label{genprop} If $(Y, X)$ is a $\lambda$-UFO pair then,
for every $\varepsilon > 0$, every $X$-valued operator $t$ defined
on a finite-dimensional subspace of $Y$ admits a finite-rank
extension to the whole of $Y$ with norm at most $(\lambda
+\varepsilon)\|t\|$.
\end{lemma}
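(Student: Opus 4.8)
The plan is to take the extension guaranteed by the hypothesis and then do the real work, namely to squeeze it down to finite rank while conceding only $\varepsilon$ in the norm. After normalizing so that $\|t\|=1$, applying the $\lambda$-UFO property to $E$ produces $T\colon Y\to X$ with $T|_E=t$ and $\|T\|\le\lambda$. A finite-rank extension certainly exists in principle---choosing an Auerbach basis of $E$ and extending its coordinate functionals by Hahn--Banach gives one of rank $\dim E$---but its norm is only controlled by $\dim E$, not by $\lambda$; meanwhile $T$ has the right norm yet, failing to be compact, cannot be norm-approximated by finite-rank operators at all. So neither object suffices by itself, and the uniformity of the constant $\lambda$ over \emph{all} finite-dimensional subspaces (not merely the existence of one good extension) will have to be used.

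I would recast the statement as the computation of a quotient norm. Put $\gamma=\inf\{\|S\|: S\colon Y\to X \text{ finite rank},\ S|_E=t\}$; then finite-rank extensions of norm $\le\lambda+\varepsilon$ exist for every $\varepsilon>0$ exactly when $\gamma\le\lambda$. By trace duality, $\gamma$ is the supremum of $|\sum_j\langle t a_j,x_j^*\rangle|$ over finite tensors $\psi=\sum_j a_j\otimes x_j^*$ with $a_j\in E$ and $x_j^*\in X^*$, normalized in the dual of the space of approximable operators. Because $a_j\in E$ we may replace $t a_j$ by $T a_j$, so the quantity to bound is $|\sum_j\langle T a_j,x_j^*\rangle|$. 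The catch is that a single bounded $T$ only estimates this pairing through the projective operator-ideal norm of $\psi$, whereas the normalization entering $\gamma$ is weaker (an integral-type norm); it is precisely this gap that invalidates the naive estimate and that the Enflo phenomenon shows cannot be closed without extra hypotheses.

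The heart of the proof is therefore to estimate this pairing by $\lambda$ using the \emph{whole family} of extensions rather than one of them, and this is where I expect the main obstacle. The mechanism I would pursue is: represent the relevant functional through its action on larger and larger finite-dimensional subspaces $W$ with $E\subseteq W\subseteq Y$; on each $W$ extend the pertinent finite-rank data with the uniform bound $\lambda$; assemble these into a norm-$\le\lambda$ extension with values in a bidual (or an ultrapower) of $X$, as in Lemma~\ref{extbidual}; and finally invoke the principle of local reflexivity to pull a finite-dimensional slice of this extension back into $X$, obtaining a genuine $X$-valued finite-rank operator that still restricts to $t$ on $E$ and whose norm has grown by at most $\varepsilon$. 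Local reflexivity is what supplies finite rank, since it is applied to finite-dimensional data where the unit ball is compact, and the uniform constant $\lambda$ is what survives the limiting passage; making these two requirements cohere---extracting finite rank while transporting the norm estimate---is the step I expect to be delicate.
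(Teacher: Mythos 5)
Your proposal assembles the right tools (the bidual extension of Lemma~\ref{extbidual} and the principle of local reflexivity), but it is missing the one idea that makes them fit together, and the step you yourself flag as ``delicate'' is a genuine gap. If you apply Lemma~\ref{extbidual} to $t$ (or to data on intermediate subspaces $W$), the resulting extension $\psi\colon Y\to X^{**}$ with $\|\psi\|\le\lambda\|t\|$ will in general have infinite-dimensional range. Local reflexivity acts on finite-dimensional subspaces of $X^{**}$; it cannot convert an infinite-rank $X^{**}$-valued extension into a finite-rank $X$-valued one. To ``pull a finite-dimensional slice back into $X$'' while still extending $t$ you would need a finite-rank map $X^{**}\to X$ of norm close to $1$ fixing $t(E)$, which is essentially the bounded approximation property of $X$ --- not available here, and assuming it would be circular, since the lemma is used precisely to prove that UFO spaces have the UAP (the paper's Lemma~\ref{casoBAP} is the separate variant where BAP \emph{is} assumed and this composition trick does work). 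The trace-duality reformulation via $\gamma$ does not close the gap either: as you yourself observe, the pairing is normalized by an integral-type norm that a single bounded extension cannot estimate, and you never specify the mechanism that would; the proposal then reverts to the bidual-plus-local-reflexivity plan, which stalls as above.

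The paper's proof inserts exactly the missing idea \emph{before} invoking Lemma~\ref{extbidual}: it first enlarges the domain of $t$. Take a finite $\varepsilon$-net $(y_n)$ of the unit sphere of $E$, pick norming functionals $f_n$ with $f_n(y_n)=\|y_n\|$, and set $H=\bigcap_n\ker f_n$. Then $E\cap H=0$ and the projection $p\colon E\oplus H\to E$ along $H$ has norm at most $(1-\varepsilon)^{-1}$. One extends not $t$ but $tp\colon E\oplus H\to X$, an operator defined on a \emph{finite-codimensional} subspace and vanishing on $H$. Its extension $T\colon Y\to X^{**}$ given by Lemma~\ref{extbidual} then automatically has finite rank, because $T$ vanishes on $H$ and hence factors through the finite-dimensional quotient $Y/H$. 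Only at this point does local reflexivity enter, and for the opposite purpose to the one you assign it: not to create finite rank (that is already done), but to move the finite-dimensional range $T(Y)\subset X^{**}$ back into $X$ via a map $Q$ of norm at most $(1-\varepsilon)^{-1}$ fixing $T(Y)\cap X\supseteq t(E)$, so that $QT$ is a finite-rank extension of $t$ with $\|QT\|\le(1-\varepsilon)^{-2}\lambda\|t\|$, which is $(\lambda+\varepsilon')\|t\|$ for suitable $\varepsilon$.
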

\begin{proof}
Let $F$ be a finite-dimensional subspace of $Y$ and let $t: F\to X$ be an
operator. Take $(y_n)$ to be a finite $\varepsilon$-net of the unit sphere of $F$,
with $\varepsilon<1$, then pick norm one functionals
$(f_n)$ so that $f_n(y_n)=\|y_n\|$, and form the
finite-codimensional subspace of $Y$

$$H = \bigcap_n \ker f_n.$$

It is clear that $F\cap H=0$: Indeed, if all $f_n$ vanish on some norm one
element $y\in F$ then take $y_k$ in the unit sphere of $F$ with $
\|y-y_k\| \leq \varepsilon$ and thus $\|y_k\|= f_k(y_k)=
f_k(y_k-y) + f_k(y) \leq \varepsilon$, which is impossible. Since
$F$ is finite-dimensional, $F+H$ is closed so $F+H=F\oplus H$.
Given $y$ of norm $1$ in $F$ and $h \in H$, and if $y_k$ is
defined as above, then
$$\|y+h\| \geq f_k(y+h)=f_k(y)=f_k(y-y_k)+f_k(y_k) \geq 1-\varepsilon.$$
This means that the natural projection $p:F\oplus H \to F $
actually has norm at most $(1-\varepsilon)^{-1}$. Then $tp: F
\oplus H\to X$ is a finite-rank operator with norm at most
$(1-\varepsilon)^{-1}\|t\|$. By Lemma \ref{extbidual}, $tp$ admits
an extension $T:Y\to X^{**}$, with norm
$(1-\varepsilon)^{-1}\lambda\|t\|$, which moreover has
finite-dimensional range since $T_{|_{H}} = 0$. So, by the
principle of local reflexivity,  there is an operator $Q: T(Y)\to
X$ with norm at most $(1-\varepsilon)^{-1}$ so that $Q(u)= u$ for
all $u\in T(Y)\cap X$. The operator $QT: Y\to X$ has finite range
and $QT(f) = Qtp(f) = Qt(f) = t(f)$ for all $f \in F$. Moreover
$\|QT\| \leq (1-\varepsilon)^{-2}\lambda\|t\|$.
\end{proof}

Recall that a Banach space $X$ is said to have the \emph{Bounded
Approximation Property} (BAP in short) if for some $\lambda$ and
each finite dimensional subspace $F\subset X$ there is a finite
rank operator $T:X\to X$ such that $\|T\|\leq \lambda$ and
$T(y)=y$ for each $y\in F$ (in which case it is said to have to $\lambda$-BAP). The corresponding notion in local
theory is the \emph{Uniform Approximation Property} (UAP in short)
introduced by Pe\l czy\'nski and Rosenthal \cite{pelcrose} by
asking the existence of a function $f:\N\to \N$ so that  the
choice above can be performed verifying $ \mathrm{rank} T \leq
f(\dim F)$. It is easy to see that $X$ has the UAP if and only if
every ultrapower of $X$ has the BAP.

\begin{proposition}\label{UAP} If $X$ is a $\lambda$-UFO then, for any $\varepsilon>0$, it has the ($\lambda+\varepsilon$)-UAP.
\end{proposition}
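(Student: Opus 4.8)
The plan is to exploit the characterization recorded just above the statement: $X$ has the UAP precisely when every ultrapower $X_\U$ has the BAP. Accordingly, I would first show that, for every ultrafilter $\U$ and every $\varepsilon>0$, the ultrapower $X_\U$ has the $(\lambda+\varepsilon)$-BAP, and then transfer this back to a uniform approximation property of $X$ with the same constant. By Lemma \ref{minga}(iv) every ultrapower of a $\lambda$-UFO is again a $\lambda$-UFO, so $(X_\U,X_\U)$ is a $\lambda$-UFO pair. Applying Lemma \ref{genprop} to the inclusion $t\colon F\hookrightarrow X_\U$ of an arbitrary finite-dimensional subspace $F$ (for which $\|t\|=1$) produces a finite-rank operator $T\colon X_\U\to X_\U$ with $\|T\|\le\lambda+\varepsilon$ and $T|_F=\mathrm{id}_F$. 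Since $F$ was arbitrary, $X_\U$ has the $(\lambda+\varepsilon)$-BAP for every $\varepsilon>0$.

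The delicate point, and the main obstacle, is the \emph{quantitative} transfer: passing from the $(\lambda+\varepsilon)$-BAP of every ultrapower to the $(\lambda+\varepsilon)$-UAP of $X$ without losing the constant. The qualitative equivalence is the one quoted, but to pin the constant one must run its proof with care. Concretely, if $X$ failed the $(\lambda+\varepsilon)$-UAP there would be an integer $n$ and $n$-dimensional subspaces $F_m\subset X$ whose minimal approximating rank (at norm $\le\lambda+\varepsilon$) tends to infinity; assembling Auerbach bases of the $F_m$ into an $n$-dimensional subspace $G=\mathrm{span}\{[(e^m_j)_m]\}$ of a suitable $X_\U$ and applying the $(\lambda+\varepsilon)$-BAP of $X_\U$ yields a finite-rank $S$ with $S|_G=\mathrm{id}$, which one then descends to operators on $X$ of the same rank and nearly the same norm on almost all $F_m$, a contradiction. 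The technical cost here is that finite-rank operators on $X_\U$ need not be ultraproducts of operators on $X$ (since $(X_\U)^*\ne(X^*)_\U$ in general), so the descent of the defining functionals must be effected through the principle of local reflexivity.

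To sidestep this transfer altogether and fix the constant on the nose, I would instead extract a \emph{rank bound} directly from the proof of Lemma \ref{genprop}, applied to the inclusion $F\hookrightarrow X$ itself. There the finite-rank extension factors through $Y/H$, where $H=\bigcap_{k}\ker f_k$ is the intersection of the kernels of the functionals attached to a $\delta$-net $(y_k)$ of the unit sphere of $F$; hence its rank is at most the cardinality $N$ of that net, and composing with the local-reflexivity operator $Q$ does not increase rank. A standard volumetric estimate gives $N\le(1+2/\delta)^{\dim F}$, a bound depending only on $\dim F$ and $\delta$. Choosing $\delta$ so small that $(1-\delta)^{-2}\lambda\le\lambda+\varepsilon$ then yields, for every finite-dimensional $F\subset X$, a finite-rank $T\colon X\to X$ with $T|_F=\mathrm{id}_F$, $\|T\|\le\lambda+\varepsilon$, and $\mathrm{rank}\,T\le f(\dim F)$ where $f(n)=\lceil(1+2/\delta)^n\rceil$. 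This is exactly the $(\lambda+\varepsilon)$-UAP, obtained without any appeal to ultrapowers, which is why I expect this route to be the safest for controlling the constant.
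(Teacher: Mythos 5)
Your proof is correct, and the route you finally adopt is genuinely different from the paper's official proof, though not from the paper as a whole. The paper proves Proposition \ref{UAP} in two lines by exactly your first paragraph: Lemma \ref{genprop} gives the $(\lambda+\varepsilon)$-BAP, Lemma \ref{minga} (iv) makes every ultrapower of $X$ a $\lambda$-UFO, hence every ultrapower has the $(\lambda+\varepsilon)$-BAP, and ``the result is clear'' via the quoted characterization of UAP through ultrapowers. Your second paragraph puts its finger on what that brevity hides: the characterization as quoted is qualitative, and transferring the constant $(\lambda+\varepsilon)$ back from the ultrapowers to $X$ requires running the Heinrich-type argument you sketch (descent of finite-rank operators via the principle of local reflexivity, precisely because $(X_{\U})^*\neq (X^*)_{\U}$ in general), which the paper does not spell out. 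Your third paragraph then sidesteps this entirely by extracting the rank bound directly from the proof of Lemma \ref{genprop}: the extension vanishes on $H=\bigcap_k\ker f_k$, hence factors through $Y/H$, so its rank is at most the cardinality $N$ of the $\delta$-net; composing with the local-reflexivity operator does not increase rank; $N\le(1+2/\delta)^{\dim F}$ by volume estimates; and choosing $\delta$ with $(1-\delta)^{-2}\lambda\le\lambda+\varepsilon$ fixes the constant. This is exactly the content of the Remark the paper places immediately after the proposition (there $f(n)=v(n,\varepsilon)$, with $v$ the minimal net cardinality and $v(n,\varepsilon)\le(1+2/\varepsilon)^n$ cited from Pisier's book), so the authors record your argument but relegate it to a remark rather than using it as the proof proper. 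As for what each approach buys: the ultrapower route is shorter and conceptually clean, granted the quantitative form of the ultrapower characterization of UAP; your direct route is self-contained, avoids ultrapowers and the constant-transfer issue altogether, pins the constant on the nose, and yields an explicit (exponential) rank function, at the modest price of inspecting the proof of Lemma \ref{genprop} rather than merely quoting its statement.
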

\begin{proof} Lemma \ref{genprop} immediately implies that a $\lambda$-UFO must have the $(\lambda+\varepsilon)$-BAP for all $
\varepsilon>0$. Since ultrapowers of $\lambda$-UFO are
$\lambda$-UFO \cite{castplic}, the result is clear.\end{proof}

\noindent \textbf{Remarks.} The previous proof provides an
estimate for the function $f:\N\to\N$ that defines the UAP. From
the proof it follows that if $v: \N\times \R^+\to \N$ is the
function for which $v(n,\varepsilon)$ is the infimum of the $N$
such that every $n$-dimensional subspace of an UFO space $X$
admits an $\varepsilon$-net for its unit sphere with $N$ points,
then it is possible to check the UAP property in $X$ with $f(n) =
v(n,\varepsilon)$ for any choice of $\varepsilon<1$. Volume
estimates indicate that the unit ball of a real space of dimension
$n$ cannot be covered by less than $(\frac{1}{\varepsilon})^n$
balls of radius $\varepsilon$, and a similar estimate holds for
$v(n,\epsilon)$. This is essentially optimal, since actually
$v(n,\varepsilon) \leq (1+2/\varepsilon)^n$ (see \cite{PiVol},
Lemma 4.10).
%In the complex case, we obtain $v(n,\varepsilon) \leq (36/\varepsilon^2)^n$, by seeing
%$F$ as a $2n$-dimensional real space.
Therefore if a real space is UFO, then the UAP property may be verified with, for example, $f(n) = 4^n$, which is clearly a bad estimate.  Johnson and Pisier \cite{johnpisi} proved that a Banach space is a weak Hilbert space if and only if $f(n) = O(n).$\\

We now refine Lemma \ref{genprop} in order to show compact
extensibility and UFO are equivalent.

%\begin{lemma}\label{genpropbis} Let $(Y, X)$ be a $\lambda$-UFO pair with $X$ having $\mu$-BAP. Then, for every $\varepsilon>0$, every finite range $X$-valued operator $t$ defined on a subspace of $Y$ admits a finite range extension to the whole $Y$ with norm at most $(\lambda\mu +\varepsilon)\|t\|$.\end{lemma}

%\begin{prop}If $X$ is $\lambda$-UFO then, for every $\varepsilon > 0$, every finite-rank operator defined on a subspace of $X$ admits a finite-rank extensions to $X$ with norm at most $\lambda^2 + \varepsilon$.\end{prop}
%Even the estimate of the norm could be improved in case $X$ had $\mu$-UAP with $\mu < \lambda$.

%\begin{cor}If the pair $(Y, X)$ is $\lambda$-UFO and $X$ has $\mu$-BAP then $(Y, X)$ is $\lambda \mu$-compactly extensible.\end{cor}

%\begin{prop}If $(Y, X)$ is a $\lambda$-UFO pair and $X$ has $\mu$-BAP then every $X$-valued compact operators defined on a subspace of $Y$ admits a compact extension $T$ to $X$ with $\|T\|\leq \lambda\mu \|t\|$. In particular, $(Y, X)$ is an UFO pair if and only if $(Y, X)$ is uniformly compactly extensible.\end{prop}

%\begin{cor}$X$ is $\lambda$-UFO then $X$ is $\lambda^2$-compactly extensible.\end{cor}

\begin{lemma}\label{casoBAP}
Let $X$ be a space with BAP. The following statements are equivalent:
\begin{enumerate}
\item The pair $(Y, X)$ is UFO. \item There exists $C \geq 1$ such
that for every subspace $E\subset Y$, every finite-rank operator
$t: E\to X$ admits a finite-rank extension with $\|T\|\leq C
\|t\|$.
%\item For every subspace $E\subset Y$, every compact operator $t: E \to X$ admits a compact extension to $Y$.
\item The pair $(Y, X)$ is compactly extensible. \item There
exists $C \geq 1$ such that for every subspace $E\subset Y$, every
compact operator $t: E\to X$ admits a compact extension with
$\|T\|\leq C \|t\|$.
\end{enumerate}
\end{lemma}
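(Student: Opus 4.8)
The plan is to prove the four conditions equivalent by running the cycle $(1)\Rightarrow(2)\Rightarrow(4)\Rightarrow(3)\Rightarrow(1)$, since two of these arrows are essentially free: $(4)\Rightarrow(3)$ is immediate because a compact extension is in particular an extension, and $(3)\Rightarrow(1)$ only has to supply the uniform bound, as a compactly extensible pair certainly extends every finite-rank (hence compact) operator defined on a finite-dimensional subspace. The work is concentrated in $(1)\Rightarrow(2)$, which upgrades the bidual-valued extension of Lemma \ref{extbidual} to a genuinely finite-rank, $X$-valued one by exploiting the BAP of $X$, and in the last arrow, where the uniform constant must be extracted from unquantified extensibility.

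For $(1)\Rightarrow(2)$, assume $(Y,X)$ is $\lambda$-UFO and let $t\colon E\to X$ be finite-rank on an arbitrary subspace $E\subset Y$. First I would extend $t$, viewed as an operator into $X\subseteq X^{**}$, by Lemma \ref{extbidual}: this yields $T\colon Y\to X^{**}$ with $T_{|E}=t$ and $\|T\|\leq\lambda\|t\|$. This $T$ need not be finite-rank, so the key step is to compress it back into $X$. Writing $G=\mathrm{im}\,t$, a finite-dimensional subspace of $X$, the BAP furnishes a finite-rank operator $R\colon X\to X$ with $\|R\|\leq\mu$ (the BAP constant) and $R_{|G}=\mathrm{id}_G$. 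Then $R^{**}\colon X^{**}\to X^{**}$ has range equal to the finite-dimensional space $\mathrm{im}\,R\subseteq X$, so $R^{**}T\colon Y\to X$ is finite-rank with norm at most $\mu\lambda\|t\|$; and for $e\in E$ we have $R^{**}T(e)=R^{**}t(e)=Rt(e)=t(e)$ since $t(e)\in G$. Thus $R^{**}T$ is the required finite-rank extension, and $(2)$ holds with $C=\mu\lambda$, a constant independent of the rank of $t$.

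For $(2)\Rightarrow(4)$, let $t\colon E\to X$ be compact. Using the BAP operators $R_n$ (finite-rank, $\|R_n\|\leq\mu$, converging to the identity uniformly on compact sets) and the compactness of $t$, I would obtain $R_n t\to t$ in operator norm, and so, after passing to a subsequence, write $t=\sum_k u_k$ with each $u_k$ finite-rank, $\|u_1\|\leq(\mu+1)\|t\|$ and $\|u_k\|\leq 2^{-k}\|t\|$ for $k\geq 2$. Extending each $u_k$ by $(2)$ to a finite-rank $U_k\colon Y\to X$ with $\|U_k\|\leq C\|u_k\|$, the series $T=\sum_k U_k$ converges in operator norm because $\sum_k\|U_k\|\leq C\sum_k\|u_k\|<\infty$; hence $T$ is compact, restricts to $t$ on $E$, and satisfies $\|T\|\leq C'\|t\|$. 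This proves $(4)$, and $(4)\Rightarrow(3)$ is trivial.

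The last arrow $(3)\Rightarrow(1)$ is where I expect the real difficulty. Extendability of operators on finite-dimensional subspaces follows for free, but producing a single $\lambda$ valid for all of them requires a uniform boundedness argument in the spirit of Lemma \ref{minga}(i). The plan is to argue by contradiction: if no uniform constant exists, select finite-dimensional $E_n\subset Y$ and operators $\tau_n\colon E_n\to X$ with $\|\tau_n\|\to 0$ fast (say $\|\tau_n\|\le 2^{-n}$) whose minimal extension norms nevertheless tend to infinity, assemble them into a single compact operator $\tau=\sum_n\tau_n\pi_n$ on $E=\overline{\sum_n E_n}$, and apply $(3)$ to extend $\tau$ to a bounded $T\colon Y\to X$; since $T_{|E_n}=\tau_n$, the fixed finite number $\|T\|$ would dominate extension norms that were arranged to diverge, a contradiction. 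The technical obstacle is that the $E_n$ must be made sufficiently independent for the well-definedness and compactness of $\tau$, which I would secure by a gliding-hump perturbation passing to a basic finite-dimensional decomposition before forming $\tau$; this is precisely the point handled as in Lemma \ref{minga}(i).
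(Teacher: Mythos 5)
Your proof is correct and follows essentially the same architecture as the paper's: the same cycle of implications, the bidual extension of Lemma \ref{extbidual} for $(1)\Rightarrow(2)$, a telescoping series of finite-rank extensions summed into a norm-convergent (hence compact) extension for $(2)\Rightarrow(4)$, and a deferral to Lemma \ref{minga}(i) (i.e.\ to the Moreno--Plichko argument) for extracting the uniform constant in $(3)\Rightarrow(1)$ --- exactly as the paper does, which likewise simply cites that lemma for this step. The one genuine divergence is inside $(1)\Rightarrow(2)$: where the paper, after forming $\tau^{**}T$, invokes the principle of local reflexivity to produce an operator $Q\colon \tau^{**}T(Y)\to X$ of norm at most $1+\varepsilon$ fixing $\tau^{**}T(Y)\cap X$, you instead observe that the biadjoint of a finite-rank operator $R=\sum_i f_i\otimes x_i$ on $X$ satisfies $R^{**}\xi=\sum_i \xi(f_i)\,x_i\in X$ for every $\xi\in X^{**}$, so that $R^{**}T$ already takes values in $X$. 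This is valid, shortens the argument, and even sharpens the constant from the paper's $(1+\varepsilon)\lambda\mu$ to a clean $\lambda\mu$; indeed, for the same reason the paper's appeal to local reflexivity in this particular implication is redundant (unlike in Lemma \ref{genprop}, where the extension $T$ genuinely has range in $X^{**}$ before any finite-rank compression). The only quantitative difference in $(2)\Rightarrow(4)$ is that your constant comes out as roughly $C(\mu+2)$ rather than the paper's $C+\varepsilon$, obtained there by making the telescoping tails of total size $\varepsilon\|t\|/C$; since statement (4) only asserts the existence of some uniform constant, this is harmless, though you could recover the sharper bound by starting the telescoping from a first term $t_1$ with $\|t_1-t\|$ small rather than from a BAP compression of norm up to $\mu\|t\|$.
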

\begin{proof} That (iv) implies (iii) and (ii) implies (i) are
obvious. And that (iii) implies (i) is Lemma 1.1 (i) and was
proved in \cite{moreplic} without the need of asking $X$ to
have the BAP.\\

(i) implies (ii): we prove that if  $(Y, X)$ is $\lambda$-UFO and
$X$ has the $\mu$-BAP, then (ii) holds with $C=\lambda
\mu+\varepsilon$, for all $\epsilon>0$. Let $t: E\to X$ be a
finite-rank operator from a subspace $E \subset Y$ and let $F=tE$.
By Lemma \ref{extbidual}, there exists an extension $T: Y\to
X^{**}$ of $t$ with $\|T\|\leq \lambda \|t\|$. Now, since $X$ has
$\mu$-BAP, there is a finite range operator $\tau: X\to X$ such
that $\tau(u) = u$ for every $u\in F$ and $\|\tau\|\leq \mu$.
Consider the bi-adjoint operator $\tau^{**}$ and take the finite
dimensional subspace $\tau^{**}T(Y)\subset X^{**}$. By the
principle of local reflexivity \cite{lindtzaf}, there exists an
operator $Q: \tau^{**}T(Y) \to X$ of norm at most $1+\varepsilon$
such that $Q(x) = x$ for every $x\in \tau^{**}T(Y)\cap X$. The
operator $Q\tau^{**}T : Y\to X$ has finite range and for every
$e\in E$, $Q\tau^{**}T(e) = Q\tau t(e) = \tau (t e) = t e$.
Moreover $\|Q\tau^{**}T\| \leq (1+\varepsilon) \mu\lambda\|t\|$.\\

(ii) implies (iv): we prove that if $X$ has the AP and (2) holds
with $C$, then the pair $(Y,X)$ is $(C+\varepsilon)$-compactly
extensible for any $\varepsilon>0$, with compact extension.
Consider any subspace $E\subset Y$ and a compact operator $t: E
\to X$, since $X$ has the AP, $t$ can be uniformly approximated by
finite rank operators $t_n: E \to X$, that is $t = \|\cdot\|-\lim
t_n$, and we may assume $\|t_n\|=\|t\|$ for all $n$. Let
$\varepsilon >0$, without lost of generality (passing to a
subsequence if necessary) we may assume that $\|t_1 - t\|\leq
\varepsilon$ and   $\|t_n-t_{n-1}\| \leq \varepsilon_n$, for
$n\geq 2$, with $(\varepsilon_n)$ a given sequence of positive
numbers such that $\sum_{n = 2}^{+\infty}\varepsilon_n \leq
\varepsilon\|t\|/C$. Consider for every $n\geq 2$ finite-rank
extensions $T_n: Y\to X$ of $\tau_n = t_{n}-t_{n-1}$ with
$\|T_n\|\leq C  \|\tau_n\|$ and let $T_1$ be an extension of $t_1$
such that $\|T_1\|\leq C  \|t_1\|$. The operator $T = \sum T_n$ is
a well defined compact operator which extends $t$ and such that
$\|T\|\leq (C+\varepsilon)\|t\|$. \end{proof}

\begin{proposition}\label{compext}
The following statements are equivalent:
\begin{enumerate}
\item $X$ is UFO. \item $X$ is compactly extensible. \item $X$ is
uniformly compactly extensible.
\end{enumerate}
Moreover the extension operators in (ii) and (iii) may be chosen
compact.\end{proposition}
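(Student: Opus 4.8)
The plan is to reduce the statement to Lemma~\ref{casoBAP}, whose four conditions are equivalent for any space possessing the BAP. The only obstacle between that lemma and the present proposition is that Lemma~\ref{casoBAP} assumes the BAP outright, whereas here we begin with nothing beyond the UFO hypothesis. The decisive observation is that a UFO space automatically enjoys the BAP: this is already contained in Proposition~\ref{UAP}, since the UAP (for whatever admissible function $f$) is \emph{a fortiori} the BAP. Once the BAP is secured, Lemma~\ref{casoBAP} applies with $Y=X$ and the three equivalences of the proposition, together with the compactness of the extensions, drop out immediately.

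I would arrange the argument as the cycle (i)~$\Rightarrow$~(iii)~$\Rightarrow$~(ii)~$\Rightarrow$~(i). The implication (iii)~$\Rightarrow$~(ii) is trivial, uniform compact extensibility being merely a strengthening of compact extensibility, and (ii)~$\Rightarrow$~(i) is precisely Lemma~\ref{minga}~(i), which requires no approximation property whatsoever. The substantive step is (i)~$\Rightarrow$~(iii). Assuming $X$ is $\lambda$-UFO, Proposition~\ref{UAP} furnishes the $(\lambda+\varepsilon)$-UAP, and in particular a finite BAP constant $\mu$ for $X$. I may then invoke Lemma~\ref{casoBAP} with $Y=X$: its implication $(1)\Rightarrow(4)$ converts the UFO property of $(X,X)$ into a single constant $C$ (of the shape $\lambda\mu+\varepsilon$) together with compact extensions valid for every compact operator defined on any subspace, which is exactly the assertion that $X$ is uniformly compactly extensible.

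The ``moreover'' clause then comes for free from the very same invocation: the extensions delivered by clause~(4) of Lemma~\ref{casoBAP} are compact---its proof assembles each such extension as a norm-convergent series of finite-rank operators---so the witnessing extensions for~(ii) and~(iii) may be chosen compact. I expect the only genuine subtlety to lie in the bootstrapping step UFO~$\Rightarrow$~BAP; every subsequent step is a direct transcription of Lemma~\ref{casoBAP}, and the sole role of Proposition~\ref{UAP} is to upgrade the \emph{a priori} weak UFO hypothesis to the BAP needed to enter that lemma. One should merely confirm that the constant $C=\lambda\mu+\varepsilon$ is finite, which it is precisely because a finite BAP constant $\mu$ is now available.
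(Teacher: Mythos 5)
Your proposal is correct and follows essentially the same route as the paper, which leaves Proposition~\ref{compext} as an immediate consequence of Lemma~\ref{casoBAP} once one notes that a UFO space has the BAP (via Lemma~\ref{genprop}/Proposition~\ref{UAP}), closing the cycle with Lemma~\ref{minga}~(i). Your identification of the bootstrapping step UFO~$\Rightarrow$~BAP as the only nontrivial point, and the compactness of the extensions coming from clause~(4) of Lemma~\ref{casoBAP}, matches the paper's intended argument exactly.
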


When $Y$ is a fixed subspace of the Banach space $X$, and $E$ is a
Banach space it is a direct consequence from the open mapping
theorem that if all (compact) operators $Y\to E$ can be extended
to operators $X\to E$ then they can be uniformly extended. Also,
$\mathcal L_\infty$-spaces have the property when acting as target
spaces $E$ that all $E$-valued compact operators can be
(uniformly) extended \cite{lindmemo} and admit compact extensions.

\section{B-convex UFO}
The following dichotomy result was obtained
in \cite{castplic}:

\begin{theorem}[\textbf{Dichotomy principle}]\label{dicho} An UFO Banach space is either an $\mathcal
L_\infty$-space or a $B$-convex near-Hilbert space with MEP.
\end{theorem}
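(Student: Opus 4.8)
The plan is to organize everything around the fork given by B-convexity, reading the UFO hypothesis as a form of \emph{uniform local injectivity}: by definition every operator from a finite-dimensional subspace into $X$ extends to $X$ with a loss of at most the factor $\lambda$. Two reductions are available from the outset. By Lemma~\ref{minga}(iv) each ultrapower $X_\U$ is again $\lambda$-UFO, and by Lemma~\ref{extbidual} the pair $(X,X^{**})$ is $\lambda$-extensible; this lets me manufacture finite-dimensional obstructions, transport them to the ultrapower or to the bidual, and then pull the conclusion back to $X$ by the principle of local reflexivity exactly as in the proof of Lemma~\ref{genprop}. Recall finally that $X$ is B-convex iff it has some nontrivial type $p>1$, iff it does not contain the $\ell_1^n$ uniformly, and that B-convexity is self-dual.

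Suppose first that $X$ is B-convex, so $1<p(X)$ and $q(X)<\infty$; I must show $p(X)=q(X)=2$ and MEP. The structural input is the Maurey--Pisier theorem, which produces subspaces of $X$ uniformly isomorphic to $\ell_{p(X)}^n$ and to $\ell_{q(X)}^n$. Feeding the formal identifications between these blocks into the $\lambda$-UFO extension inequality, and comparing the resulting uniformly bounded operators with the genuine Banach--Mazur distances $d(\ell_{p(X)}^n,\ell_2^n)$ and $d(\ell_{q(X)}^n,\ell_2^n)$ --- which grow without bound unless the exponent is $2$ --- forces $p(X)=q(X)=2$. Thus $X$ is near-Hilbert and, in particular, contains the $\ell_2^n$ uniformly. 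For MEP I would then show that the pair $(X,\ell_2)$ is UFO: given $t\colon E\to\ell_2$ on a finite-dimensional $E\subset X$, one routes $t$ through an almost-isometric copy of $\ell_2^{\dim t(E)}$ sitting inside $X$, extends the resulting $X$-valued operator by the hypothesis, and corrects it back onto $\ell_2$ by the local-reflexivity scheme of Lemma~\ref{genprop}. Since $\ell_2$ is complemented in its bidual, Lemma~\ref{extbidual} and the equivalence of $M_2$ with MEP then yield MEP.

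Now suppose $X$ is not B-convex; I must show $X$ is an $\mathcal L_\infty$-space. Here the philosophy is that uniform local injectivity is incompatible with ``$\ell_1$-type'' local structure, so the failure of B-convexity can only be accommodated by genuine $\ell_\infty$-blocks. Concretely, since B-convexity is self-dual, $X^*$ also contains the $\ell_1^n$ uniformly, and duality together with local reflexivity places $\ell_\infty^n$-blocks uniformly in $X^{**}$; as $(X,X^{**})$ is extensible and $\ell_\infty^n$ is $1$-injective, the extensions furnished by the hypothesis can be assembled into uniformly bounded projections onto ever larger $\ell_\infty^k$-subspaces. This realizes every finite-dimensional subspace of $X^{**}$, up to the constant $\lambda$, inside an $\ell_\infty^k$, which is the definition of an $\mathcal L_{\infty,\lambda}$-space; since being $\mathcal L_\infty$ passes between $X$ and $X^{**}$, conclusion (i) follows.

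The step I expect to be the main obstacle is the passage, in the B-convex case, from the \emph{one-sided} extension hypothesis to the \emph{two-sided} conclusion $p(X)=q(X)=2$. The UFO property only asserts that operators extend \emph{into} $X$, and so transmits one side of the local geometry more readily than the other; pinning \emph{both} critical exponents to $2$ requires playing the hypothesis back through $X^{*}$ and $X^{**}$ (both again UFO) and invoking the self-duality of B-convexity, while keeping the extension constant uniform across this duality and ruling out the degenerate configuration $p(X)<2<q(X)$. By comparison, the $\mathcal L_\infty$ conclusion and the derivation of MEP are relatively routine once the correct local $\ell_\infty$- and $\ell_2$-blocks have been produced and the local-reflexivity correction of Lemma~\ref{genprop} is in place.
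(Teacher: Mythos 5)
A preliminary remark on the comparison: the paper does not prove Theorem~\ref{dicho} at all --- it quotes it from \cite{castplic} --- so your attempt has to be measured against the argument given there, parts of which are recalled elsewhere in this paper. Your case split on $B$-convexity is of course the right skeleton, but both halves of your proposal contain steps that fail. In the non-$B$-convex half, your production of uniform $\ell_\infty^n$-blocks in $X^{**}$ rests on a false duality step: uniform copies of $\ell_1^n$ in $X^*$ dualize to uniform \emph{quotient} maps $X^{**}\to\ell_\infty^n$, not to embeddings of $\ell_\infty^n$ into $X^{**}$. Since at that point you invoke only ``duality together with local reflexivity'' and not the UFO hypothesis, you are deriving a statement that is false in that generality: $\ell_1$ and $\ell_1^*=\ell_\infty$ both contain the $\ell_1^n$ uniformly, yet $\ell_1^{**}$ is an $\mathcal L_1$-space of cotype $2$ and contains no uniform copies of $\ell_\infty^n$. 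The $\ell_\infty$-structure has to be extracted from the extension property itself, and \cite{castplic} does this by an entirely different route: the uniform $\ell_1^n$'s yield a copy of $\ell_1$ inside an ultrapower/bidual of $X$, which is extensible by Lemma~\ref{minga}; an \emph{extensible space containing $\ell_1$ is separably injective} (the paper recalls precisely this feature of the proof in Section 4), separably injective spaces are $\mathcal L_\infty$, and $\mathcal L_\infty$-ness passes back to $X$ because $X$ is locally complemented in its ultrapowers and biduals. Your final assembly step is also insufficient: well-complemented $\ell_\infty^k$-subspaces of $X^{**}$ are automatic from the $1$-injectivity of $\ell_\infty^k$ once they exist, but their existence says nothing about an \emph{arbitrary} finite-dimensional $F\subset X^{**}$ being contained in one, which is what the $\mathcal L_{\infty,\lambda}$ definition demands.

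In the $B$-convex half, the near-Hilbert step --- which you rightly single out as the crux --- is not an argument: extending the formal identities between the Maurey--Pisier blocks produces uniformly bounded operators on $X$, and the existence of such operators contradicts nothing. For instance, in $\ell_p\oplus\ell_2$ with $p<2$ the formal identities from the $\ell_p^n$-blocks to the $\ell_2^n$-blocks extend with norm one, the distances $d(\ell_p^n,\ell_2^n)$ tend to infinity, and the space is far from near-Hilbert; so the mechanism you describe (bounded extensions versus growing Banach--Mazur distances) cannot produce a contradiction. What the UFO hypothesis actually gives is a complementation transfer (\cite[Thm.~4.4]{moreplic}, \cite[Lemma~6.11]{castplic}, both used elsewhere in this paper): uniformly isomorphic families of finite-dimensional subspaces of an UFO must have uniformly comparable projection constants, and one plays the uniformly complemented $\ell_2^n$'s, which every $B$-convex space possesses (see the proof of Proposition~\ref{eledos}), against suitably placed badly complemented copies. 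Your proposed repair --- ``playing the hypothesis back through $X^*$ and $X^{**}$ (both again UFO)'' --- rests on a false claim: biduals of UFO are UFO by Lemma~\ref{minga}, but duals are not; $c_0$ is an UFO while $\ell_1$, being neither $\mathcal L_\infty$ nor $B$-convex, is not one, by the very dichotomy under discussion. Finally, in the MEP step, local reflexivity cannot ``correct back onto $\ell_2$'': after extending $ut\colon E\to X$ to $T\colon X\to X$ you need a uniformly bounded projection of $X$ onto the chosen copy of $\ell_2^k$, i.e.\ once more the uniformly complemented Euclidean sections; granted those, the efficient route is the one the paper uses elsewhere --- the ultrapower $X_\U$ contains $\ell_2$ complemented, hence $(X,\ell_2)$ is an UFO pair, and Lemma~\ref{extbidual} together with $\ell_2=\ell_2^{**}$ yields MEP. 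This last gap is repairable; the previous two are not, since the steps proposed there are either false or vacuous.
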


This dichotomy provides an affirmative answer to a question of
Galego \cite{gale}: Is an automorphic subspace of $c_0(\Gamma)$
isomorphic to some $c_0(I)$? Indeed, any infinite dimensional
closed subspace of $c_0(\Gamma)$ contains $c_0$, hence every UFO
subspace of $c_0(\Gamma)$ must be an $\mathcal L_\infty$-space;
and $\mathcal L_\infty$-subspaces of $c_0(\Gamma)$ are isomorphic
to $c_0(I)$ (cf. \cite{gkl}). We will now focus  our attention on
UFO which are not $\mathcal L_\infty$-spaces.

\adef A Banach space is said to be a \emph{B-UFO} if it is a
B-convex UFO. \zdef

Of course the first open question is: Do there exist non-Hilbert
B-UFO spaces? The only candidate currently known seems to be
Tsirelson's 2-convexified space $\mathcal T_2$ \cite{casshu},
which is near-Hilbert, superreflexive and, having type 2, also
enjoys MEP \cite[p.~127]{casshu}. It is not known whether
$\mathcal T_2$ is an UFO (the answer would be negative if
$\mathcal T_2$ would, for instance, contain an uncomplemented copy
of itself since no automorphism of the space can send a complemented subspace into an uncomplemented one. The space $\mathcal T_2$ moreover is \emph{minimal}
--a Banach space $X$ is minimal when every infinite-dimensional
closed subspace contains a copy of $X$--. Now observe that by the
Gowers dichotomy \cite{gowedicho}, every Banach space contains
either an Hereditarily Indecomposable subspace or a subspace with
unconditional basis, and spaces with unconditional basis must be
either reflexive, contain $\ell_1$ or $c_0$
\cite{james}. Thus, if $X$ is a minimal UFO that contains
an HI subspace, $X$ itself must be HI; but HI spaces cannot be
minimal since they are not isomorphic to their proper subspaces.
If, however, $X$ contains either $\ell_1$ or $c_0$, it must be itself be a
subspace of either $\ell_1$ or $c_0$ and an $\mathcal
L_\infty$-space, so it must be $c_0$. All this means that a
minimal UFO must be either $c_0$ or a reflexive near-Hilbert
space with MEP. This can be improved using the local version of minimality. Recall that a Banach space $X$ is said to be \emph{locally minimal} (\cite{fererose}) if there is
some $K \geq 1$ such that every finite-dimensional subspace
$F\subset X$ can be $K$-embedded into any infinite-dimensional
subspace $Y\subset X$. This notion is much weaker than classical
minimality; for instance, every $c_0$-saturated Banach space is
locally minimal.

\begin{proposition} A minimal UFO must be either $c_0$ or a superreflexive near-Hilbert
space with MEP.
\end{proposition}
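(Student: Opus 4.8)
The plan is to upgrade the preceding paragraph, which already establishes that a minimal UFO must be either $c_0$ or a reflexive near-Hilbert space with MEP, by replacing ordinary reflexivity with superreflexivity. The argument in the text shows that if a minimal UFO $X$ is not $c_0$, then by the Gowers dichotomy it cannot contain an HI subspace (since a minimal space embeds in all its subspaces, while an HI space would then force $X$ itself to be HI, contradicting minimality) and cannot contain $\ell_1$ or $c_0$ (else $X$ would be an $\mathcal{L}_\infty$-space and hence $c_0$); therefore $X$ has an unconditional basis and, by James' theorem, is reflexive. So the only thing I would add here is the step from reflexive to superreflexive.

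The key observation is that the Dichotomy principle (Theorem \ref{dicho}) already tells us that a non-$\mathcal{L}_\infty$ UFO is $B$-convex. By the classical James--Enflo theory, $B$-convexity is equivalent to having nontrivial type, and a Banach space has nontrivial type if and only if it is $B$-convex; crucially, a $B$-convex space is automatically superreflexive. Thus the reflexivity obtained from James' theorem can be strengthened for free: since $X$ is a $B$-UFO by the Dichotomy principle, it is $B$-convex, and $B$-convexity directly yields superreflexivity. First I would invoke Theorem \ref{dicho} to place $X$ (when it is not $c_0$) in the $B$-convex near-Hilbert-with-MEP alternative, and then cite the standard fact that $B$-convex spaces are superreflexive to conclude.

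I do not anticipate a serious obstacle, as the heavy lifting has been done in the paragraph preceding the statement and in the Dichotomy principle. The one point requiring a little care is making sure the logical structure is clean: the preceding paragraph dispatches the case where $X$ contains $\ell_1$ or $c_0$ (forcing $X=c_0$), so in the remaining case $X$ is reflexive with an unconditional basis, and the Dichotomy principle identifies it as $B$-convex; superreflexivity then follows from $B$-convexity. The near-Hilbert and MEP conclusions are immediate restatements of the Dichotomy principle. Hence the proof reduces to citing Theorem \ref{dicho} together with the fact that $B$-convexity implies superreflexivity, superseding the weaker reflexivity recorded just above.
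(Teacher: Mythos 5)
Your handling of the near-Hilbert and MEP clauses (restating Theorem \ref{dicho}) and of the $c_0$ alternative (the paragraph preceding the statement) is fine, but the one step that is genuinely new in your argument --- the upgrade from reflexive to superreflexive --- rests on a false claim. It is \emph{not} a standard fact that $B$-convexity implies superreflexivity; the implication runs the other way. Superreflexive spaces are uniformly convexifiable and hence $B$-convex, but James constructed nonreflexive $B$-convex spaces, and later even nonreflexive spaces of type $2$, so $B$-convexity does not imply reflexivity, let alone superreflexivity. Nor does combining $B$-convexity with the reflexivity already recorded in the preceding paragraph close the gap: if $Z$ is a nonreflexive space of type $2$ and $(F_n)$ is an increasing sequence of finite-dimensional subspaces of $Z$ with dense union, then $\big(\sum_n F_n\big)_{\ell_2}$ is reflexive and $B$-convex, yet $Z$ is finitely representable in it, so it fails superreflexivity. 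Consistently with this, the paper itself lists as an \emph{open} question whether every B-UFO is reflexive; if your claimed implication were true, that question (and much more) would be settled trivially. So as written your proof yields nothing beyond the reflexive statement that the preceding paragraph already contains, and the superreflexivity --- which is the entire content of the proposition --- is unproved.

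The paper's actual proof uses minimality a second time, in its local form, rather than trying to deduce superreflexivity from isomorphic invariants of the space alone. By Gowers' dichotomy and minimality, in the non-$c_0$ case one is placed inside a space with unconditional basis, to which Johnson's local version of James' trichotomy \cite{johnson} applies: a space with local unconditional structure either contains the $\ell_1^n$ uniformly complemented, or contains the $\ell_\infty^n$ uniformly complemented, or is superreflexive. Since minimal implies locally minimal, the first two alternatives force the minimal UFO to contain $\ell_1^n$'s uniformly, hence (by Theorem \ref{dicho}) to be an $\mathcal L_\infty$-space, and a minimal $\mathcal L_\infty$ UFO must be $c_0$; a reflexive space cannot be $\mathcal L_\infty$, so in the remaining case the space is superreflexive. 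In short, superreflexivity is extracted from Johnson's trichotomy combined with (local) minimality; to repair your argument you need some tool of this kind, not the reversed implication from $B$-convexity.
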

\begin{proof} By Johnson's local version of James' trichotomy for spaces with unconditional basis
\cite{johnson} we get that a Banach space with local unconditional structure must contain uniformly complemented $\ell_1^n, \ell_\infty^n$ or to be superreflexive. Since minimal implies locally minimal, a minimal UFO containing either $\ell_1^n$ or $\ell_\infty^n$ must be $c_0$; since a reflexive space space cannot be $\mathcal L_\infty$, it must be superreflexive.
\end{proof}

\adef A Banach space is said to be \emph{hereditarily UFO} (a HUFO
in short \cite{castplic}) if each of its closed subspaces is an
UFO.\zdef

In \cite{castplic} it was shown that a HUFO spaces must be
\emph{asymptotically $\ell_2$}, i.e., there is a constant $C>0$
such that for every $n$ there is a finite-codimensional subspace
$X_n$ all whose $n$-dimensional subspaces $G$ verify
$\dist(G,\ell_2^{\dim G})\leq C$. Asymptotically $\ell_2$ spaces
are reflexive (see \cite[p.~220]{PiVol}), and reflexive UFO are
extensible. So, no $\mathcal L_\infty$ space can be HUFO, which
means that every HUFO is (hereditarily) B-UFO.

\begin{proposition}\label{locallyminimal} A locally minimal HUFO is isomorphic to a Hilbert space.
\end{proposition}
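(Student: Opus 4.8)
The plan is to fuse the two hypotheses into a single dimension-free local statement and then invoke Kwapie\'n's theorem. Since $X$ is a HUFO, it was recalled just above that $X$ is \emph{asymptotically $\ell_2$}: fix a constant $C>0$ so that for every $n$ there is a finite-codimensional subspace $X_n\subset X$ all of whose $n$-dimensional subspaces $G$ satisfy $\dist(G,\ell_2^{n})\leq C$. Let $K\geq 1$ be the constant witnessing local minimality, so that every finite-dimensional subspace of $X$ $K$-embeds into every infinite-dimensional subspace of $X$. We may assume $X$ is infinite-dimensional (otherwise there is nothing to prove), so each $X_n$, being finite-codimensional, is itself infinite-dimensional.

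First I would show that every finite-dimensional subspace of $X$ is uniformly Euclidean. Fix any $n$-dimensional subspace $F\subset X$. By local minimality applied with the infinite-dimensional subspace $X_n$, there is an into isomorphism $v\colon F\to X_n$ with $\|v\|\,\|v^{-1}\|\leq K$; its image $G=v(F)$ is an $n$-dimensional subspace of $X_n$, hence $\dist(G,\ell_2^{n})\leq C$ by the choice of $X_n$. Composing the two isomorphisms gives $\dist(F,\ell_2^{n})\leq CK$. Since neither $C$ nor $K$ depends on $n$, this proves $\dist(F,\ell_2^{\dim F})\leq CK$ for every finite-dimensional subspace $F\subset X$.

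It then remains to pass from this uniform local estimate to a global Hilbertian isomorphism. The standard route is through type and cotype. Given finitely many vectors $x_1,\dots,x_k\in X$, they span a finite-dimensional subspace that is $CK$-isomorphic to a Euclidean space via some $u$; transporting the Hilbertian identity $\mathbb{E}\|\sum_i \varepsilon_i u x_i\|^2=\sum_i\|u x_i\|^2$ through $u$ and $u^{-1}$ yields both the type $2$ and the cotype $2$ inequalities with constant at most $\|u\|\,\|u^{-1}\|\leq CK$. Thus $X$ has type $2$ and cotype $2$ with constants at most $CK$, and Kwapie\'n's theorem then gives that $X$ is isomorphic to a Hilbert space.

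I expect the only real subtlety to be bookkeeping: one must check that the $K$-embedding preserves dimension, so that $G$ is genuinely $n$-dimensional and the asymptotically $\ell_2$ estimate for $X_n$ applies, and that the resulting constant $CK$ is independent of $n$. Beyond this the conceptual content is carried entirely by the uniform local characterization of Hilbert spaces (equivalently, by Kwapie\'n's theorem), so no delicate estimate is required.
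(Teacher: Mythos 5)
Your proposal is correct, and its heart is exactly the paper's argument: use local minimality to $K$-embed an arbitrary finite-dimensional subspace of $X$ into the finite-codimensional subspaces $X_n$ coming from the asymptotically $\ell_2$ property of HUFO spaces, thereby bounding $\dist(F,\ell_2^{\dim F})$ by $CK$ uniformly. The two proofs differ only in organization and in the final ``globalization'' step. The paper argues contrapositively: if $X$ were not Hilbert, there would be finite-dimensional subspaces $H_n$ with $\dist(H_n,\ell_2^{\dim H_n})\to\infty$ --- justified by the remark that otherwise $X$ would embed into the Hilbert space obtained as an ultraproduct of its finite-dimensional subspaces (and a closed subspace of a Hilbert space is Hilbert) --- and such $H_n$ could not $K$-embed into the $X_m$. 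You instead argue directly and close with Kwapie\'n: the uniform bound $CK$ transports the parallelogram identity to give type $2$ and cotype $2$ with constant $CK$, whence $X$ is isomorphic to $\ell_2$. Both closings are standard and correct; the ultraproduct route is slightly more self-contained given the paper's toolkit (ultrapowers are already in play throughout), while your route makes the quantitative dependence on $CK$ explicit and avoids any appeal to finite representability or local reflexivity. Your ``bookkeeping'' worries are indeed harmless: an into isomorphism preserves dimension, and since the constants $C$ and $K$ are dimension-free by definition, the bound $CK$ is uniform.
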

\begin{proof} Let $X$ be a locally minimal HUFO space. On one side every HUFO must be asymptotically $\ell_2$; while, on the other
hand, the hypothesis of local minimality  means that there is some
$K \geq 1$ so that for every infinite-dimensional subspace
$Y\subset X$ and every finite-dimensional subspace $F\subset X$
there is a subspace $F_Y\subset Y$ such that $\dist(F, F_Y)\leq
K$. Now, if $X$ is not a Hilbert space, there is a sequence
$(H_n)$ of finite-dimensional subspaces such that $\lim \dist(H_n,
\ell_2^{\dim H_n})= \infty$
---otherwise $X$ would be a subspace of the Hilbert space formed as the ultraproduct
of all the finite-dimensional subspaces of $X$---. So $H_n$ cannot
$K$-embed into the finite-codimensional subspaces $X_m$ of the
definition of asymptotically $\ell_2$ for large $m$.
\end{proof}

In \cite{fererose} a dichotomy is proved opposing local minimality
to {\em tightness with constants} (one formulation of this
property is that a space $X$ with a basis is tight with constants
if no subspace of $X$ is crudely finitely representable into all
tail subspaces of $X$). The proof of Proposition
\ref{locallyminimal} indicates that a HUFO space with a basis is
either tight with constants or Hilbert. One also has:

\begin{proposition}\label{stable} HUFO spaces isomorphic to their square are isomorphic to Hilbert spaces.
\end{proposition}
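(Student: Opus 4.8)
The plan is to argue by contradiction: assume $X$ is HUFO, $X\cong X\oplus X$, and $X$ is \emph{not} isomorphic to a Hilbert space, and then extract a contradiction from the structure theory already assembled. First recall what HUFO buys us. The space $X$ is asymptotically $\ell_2$ (hence reflexive and $B$-convex) and, every closed subspace being an UFO and reflexive, Theorem \ref{dicho} excludes the $\mathcal L_\infty$ alternative and makes every subspace near-Hilbert with MEP. Two consequences will be used. On one hand, uniform MEP forces every finite-dimensional $C$-Euclidean subspace $E\subset X$ to be uniformly complemented: extend the isomorphism $E\to\ell_2^{\dim E}$ to an operator $X\to\ell_2$ of controlled norm and compose with its inverse. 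On the other hand, by the Ferenczi--Rosenthal dichotomy \cite{fererose} opposing local minimality to tightness with constants, together with the Remark preceding this statement, a HUFO carrying an (unconditional) finite-dimensional decomposition is either tight with constants or Hilbert.

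Next I would reformulate \emph{tight with constants} for our space. For an asymptotically $\ell_2$ space this property is equivalent to the absence of an infinite-dimensional subspace isomorphic to $\ell_2$: if some subspace $Y$ were crudely finitely representable in all tail subspaces, then matching each finite-dimensional piece of $Y$ against the Euclidean tail structure shows that every finite-dimensional subspace of $Y$ is uniformly Euclidean, so $Y$ is Hilbert; conversely, any Hilbert subspace is crudely finitely representable in every tail. Thus, once $X$ is known to carry such a decomposition, the whole problem reduces to exhibiting a single infinite-dimensional Hilbert subspace of $X$: this rules out tightness with constants and leaves only the Hilbert alternative of the dichotomy, which (via the local-minimality side and Proposition \ref{locallyminimal}) forces $X$ itself to be Hilbert.

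This is where $X\cong X\oplus X$ must do the real work. Iterating the square isomorphism produces $X\cong X^{2^k}$ and a supply of mutually independent complemented copies of $X$; inside them I would select uniformly complemented $C$-Euclidean blocks $E_1,E_2,\dots$ of increasing dimension and glue the associated MEP extension operators into an isomorphism of $\overline{\mathrm{span}}\,\bigcup_k E_k$ onto a subspace of $\ell_2$. The hard part, and the step I expect to be the genuine obstacle, is precisely this construction, because asymptotic $\ell_2$-ness by itself never yields a Hilbert subspace: Tsirelson's space $\mathcal T_2$ is asymptotically $\ell_2$, near-Hilbert and has MEP, yet contains no copy of $\ell_2$. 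Hence the local Euclidean data is not enough, and the lower $\ell_2$ estimate on the chosen blocks (the cotype-$2$-type control that $\mathcal T_2$ lacks) cannot be read off from types and cotypes; it has to be extracted from the interplay between the square decomposition and the \emph{hereditary} extension property, that is, from the fact that every subspace here is an UFO whereas in $\mathcal T_2$ some subspace is not. Securing this lower estimate, together with the preliminary reduction to a space carrying a finite-dimensional decomposition so that the tightness dichotomy applies to $X$ itself, is the crux on which the whole argument turns.
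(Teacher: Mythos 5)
Your proposal is not a complete proof: the step you yourself single out as ``the crux'' --- extracting an infinite-dimensional Hilbert subspace of $X$ (equivalently, a lower $\ell_2$ estimate on the blocks chosen inside the copies of $X$ supplied by $X\simeq X\oplus X$) --- is exactly where an argument is required, and none is given. Your own observation about $\mathcal T_2$ shows why no soft argument from the local data you assembled (asymptotically $\ell_2$, near-Hilbert, MEP, uniformly complemented Euclidean blocks) can ever produce such a subspace; you correctly sense that the \emph{hereditary} UFO property must be exploited, but you do not say how. Moreover, even granting the Hilbert subspace, your closing step is shaky: the dichotomy of \cite{fererose} and the remark preceding Proposition \ref{stable} concern spaces \emph{with a basis} (and are obtained by passing to subspaces), so the ``preliminary reduction'' of $X$ --- known only to be reflexive with the UAP --- to a space carrying a basis or FDD, in a way that lets you conclude that $X$ itself rather than some subspace of it is Hilbert, is a second unaddressed gap.

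The paper's proof runs in the opposite direction and avoids all of this. It simply quotes the result of \cite{castplic} that if a space contains a subspace of the form $A\oplus A$ with $A$ not isomorphic to $\ell_2$, then it contains a subspace that is \emph{not} UFO. Roughly: non-Hilbertness of $A$ yields finite-dimensional subspaces $G_k\subset A$ with $\dist(G_k,\ell_2^{\dim G_k})\to\infty$, and inside $A\oplus A$ one builds a subspace with FDD of the form $\sum (G_k\oplus F_k)$, where the $F_k\subset A$ contain badly complemented copies uniformly isomorphic to the $G_k$; such a space fails to be UFO by \cite[Thm. 4.4]{moreplic}, since it contains uniformly isomorphic but not uniformly complemented finite-dimensional subspaces. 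Taking $A=X$ and using that $X\oplus X\simeq X$ sits inside $X$, a non-Hilbert $X$ would contain a non-UFO subspace, contradicting HUFO. In other words, the square hypothesis is used to manufacture a \emph{bad} subspace violating hereditary extensibility, not a \emph{good} ($\ell_2$) subspace --- which is precisely the mechanism your plan lacks and could not supply.
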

\begin{proof} This follows from \cite{castplic} where it was
proved that if $X$ contains a subspace of the form $A\oplus A$
with $A\neq \ell_2$ then $X$ contains a non-UFO
subspace.\end{proof}

A basic open question is whether the product of two B-UFO is a
B-UFO. It is well known that the product of two $\mathcal
L_\infty$-spaces is an $\mathcal L_\infty$-space, and it is clear
that $c_0\oplus \ell_2$ cannot be an UFO (it is not $B$-convex or $\mathcal L_\infty$); so the question
above is the only case that has to be elucidated.

\begin{proposition}\label{eledos} If $X$ is B-UFO then $\ell_2\oplus X$ is B-UFO.
\end{proposition}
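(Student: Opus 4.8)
The plan is to show two things about $\ell_2\oplus X$: that it is a B-convex near-Hilbert space (routine) and, the real content, that it is UFO. I will keep in mind that being B-convex, near-Hilbert and having MEP is \emph{not} known to force the UFO property --- that is precisely the open problem embodied by the candidate $\mathcal{T}_2$ --- so the dichotomy of Theorem \ref{dicho} cannot be run backwards and the extension operators must be produced by hand. The easy part first: type and cotype of a finite direct sum are the extreme values of those of the summands, so $p(\ell_2\oplus X)=\min(2,p(X))=2$ and $q(\ell_2\oplus X)=\max(2,q(X))=2$; hence $\ell_2\oplus X$ is B-convex and near-Hilbert, and only the UFO property remains.

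To prove that $\ell_2\oplus X$ is UFO I would first reduce the target. Since an operator into a direct sum is the pair of its coordinate operators, $(Z,A\oplus B)$ is an UFO pair exactly when $(Z,A)$ and $(Z,B)$ are, with comparable constants. Thus it suffices to show that $(\ell_2\oplus X,\ell_2)$ and $(\ell_2\oplus X,X)$ are UFO pairs, the first of these being the assertion that $\ell_2\oplus X$ is an $M_2$-space, equivalently that it has MEP, by the Corollary above.

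Both pairs I would treat by the same device, exploiting the Hilbert first coordinate (taking, without loss of generality, the $\ell_2$-sum $\oplus_2$). Fix a finite-dimensional $E\subset\ell_2\oplus_2 X$ and an operator $\tau:E\to Z$, with $Z=\ell_2$ or $Z=X$. Set $E_H=E\cap\ell_2$ and choose an \emph{orthogonal} section of $P_X|_E:E\to V:=P_X(E)\subset X$, writing each $e\in E$ as $e=(\eta+\sigma v,v)$ with $\eta\in E_H$, $v\in V$, and $\sigma:V\to\ell_2$ a linear ``tilt'' whose range is orthogonal to $E_H$, so that $\|e\|^2=\|\eta\|^2+\|\sigma v\|^2+\|v\|_X^2$. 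I then look for an extension $T=A\,P_{\ell_2}+B\,P_X$ with $A:\ell_2\to Z$ and $B:X\to Z$. On $E_H$ the map $A$ is forced to equal $\tau(\cdot,0)$, and since a Hilbert subspace is $1$-complemented the pair $(\ell_2,Z)$ is a $1$-UFO pair for any $Z$, so this piece of $A$ extends to all of $\ell_2$ for free. The only genuine freedom left is, for each $v\in V$, how to split the vector $\tau(\sigma v,v)$ between $A(\sigma v)$, whose cost is measured in the Hilbert norm $\|\sigma v\|$, and $B(v)$, whose cost is measured in $\|v\|_X$. Once this splitting is linear and uniformly bounded, $A$ is completed by the free Hilbert extension while $B:V\to Z$ is extended either by MEP of $X$ (when $Z=\ell_2$) or by the UFO property of $X$ (when $Z=X$), in each case with a constant depending only on $X$.

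The main obstacle is exactly this splitting lemma: to produce linear maps $\check A:\sigma(V)\to Z$ and $B_0:V\to Z$ with $\tau(\sigma v,v)=\check A(\sigma v)+B_0(v)$, $\|\check A\|\le C\|\tau\|$ and $\|B_0\|_{X\to Z}\le C\|\tau\|$, with $C$ independent of $E$. The difficulty is the \emph{coupling}: a tilted $E$ can make a direction simultaneously small in $\|v\|_X$ and large in $\|\sigma v\|$, and any naive coordinatewise extension blows up. I would resolve it by decomposing according to the singular values of $\sigma$ relative to the Hilbert structure it induces on $V$, charging the large-$\sigma$ directions to $\check A$ (cheap against $\|\sigma v\|$) and the rest to $B_0$; the uniform bound then follows from $\|\tau(\sigma v,v)\|\le\|\tau\|\,(\|\sigma v\|^2+\|v\|_X^2)^{1/2}$ together with a Gaussian (Maurey) domination estimate in which the $\ell_2$ coordinate contributes with the optimal type-$2$ constant and the $X$ coordinate is absorbed by its MEP. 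This is precisely the step where Hilbertness of the summand is indispensable and where a general B-UFO summand would fail --- which is exactly why the product of two arbitrary B-UFO spaces remains open.
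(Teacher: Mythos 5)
Your peripheral reductions are correct: an operator into a direct sum is the pair of its coordinate operators, so it does suffice to show that $(\ell_2\oplus X,\ell_2)$ and $(\ell_2\oplus X,X)$ are UFO pairs; likewise $(\ell_2,Z)$ is a $1$-UFO pair for every $Z$, and the bookkeeping with $E_H=E\cap\ell_2$ and the orthogonal tilt $\sigma$ is sound. The genuine gap sits exactly where you place ``the main obstacle'': the splitting lemma is never proved, and it cannot be waved through, because for a graph subspace $E=\{(\sigma v,v):v\in V\}$ it is not a lemma on the way to Proposition \ref{eledos} but a restatement of it. Indeed, an extension $T:\ell_2\oplus X\to Z$ of $\tau$ is the same thing as the pair $\check A=T|_{\ell_2}$, $B_0=T|_X$ satisfying $\tau(\sigma v,v)=\check A(\sigma v)+B_0(v)$ with uniform bounds; so producing the splitting \emph{is} producing the extension, and your argument reorganizes the problem rather than solving it. The sketch offered for the splitting does not close this circle: (a) ``singular values of $\sigma$ relative to the Hilbert structure it induces on $V$'' is undefined --- the only inner product available on $V$ is the one pulled back by $\sigma$, with respect to which $\sigma$ is an isometry (all its singular values equal $1$), and there is no Euclidean structure on $(V,\|\cdot\|_X)$ to diagonalize against, while substitutes such as the John ellipsoid cost a factor $\sqrt{\dim V}$ and destroy uniformity; (b) any ``large-$\sigma$/small-$\sigma$'' decomposition $V=V_b\oplus V_s$ transfers estimates only if the projection $v\mapsto v_s$ is uniformly bounded in $\|\cdot\|_X$, but the large/small condition defines a cone, not a subspace, and finite-dimensional decompositions have in general arbitrarily bad projection constants; (c) the ``Gaussian (Maurey) domination'' has no foothold here: Maurey's theorem requires type $2$ of the space containing the \emph{domain}, and $\ell_2\oplus X$ has type $2$ only if $X$ does, whereas a B-UFO is merely near-Hilbert; its MEP comes from Theorem \ref{dicho}, concerns subspaces of $X$ rather than of $\ell_2\oplus X$, and in any case says nothing for the half of your reduction with target $Z=X$.

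For contrast, the paper's proof is entirely soft and constructs no extension operator: B-convexity gives $\ell_2^n$ uniformly complemented in $X$, so a suitable ultrapower $X_{\mathscr U}$ contains $\ell_2$ complemented, whence $X_{\mathscr U}\simeq\ell_2\oplus Z\simeq\ell_2\oplus\ell_2\oplus Z\simeq\ell_2\oplus X_{\mathscr U}$; by Lemma \ref{minga} (iv) $X_{\mathscr U}$ is UFO, hence so is $\ell_2\oplus X_{\mathscr U}$, and $\ell_2\oplus X$ is locally complemented in $\ell_2\oplus X_{\mathscr U}$, hence UFO by Lemma \ref{minga} (iii). If you want a hands-on proof you must actually prove your splitting lemma; the ultrapower route suggests that the uniformity of the constant should come from soft local-complementation and finite-representability arguments rather than from Gaussian estimates.
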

\begin{proof} Since $X$ is B-convex, it
contains $\ell_2^n$ uniformly complemented and hence, for some
ultrafilter $\mathscr U$, the ultraproduct $X_{\mathscr U}$
contains $\ell_2$ complemented. Hence $X_{\mathscr U} \simeq
\ell_2 \oplus Z \simeq \ell_2 \oplus \ell_2 \oplus Z \simeq \ell_2
\oplus X_{\mathscr U}$ is an B-UFO, as well as its locally
complemented subspace $\ell_2\oplus X$.\end{proof}

%A nice related question is whether $\ell_2(X)$ must be an $UFO$
%when $X$ is B-UFO. An affirmative answer would imply that a Banach
%space $X$ for which both $X$ and $X^*$ are UFO must be a Hilbert
%space.

\begin{corollary} If for some sequences of scalars $p_n\neq 2$ and  naturals $k_n$ a Banach space
$X$ contains finite dimensional $l_{p_n}^{k_n}$  uniformly
complemented then it is not an UFO.\end{corollary}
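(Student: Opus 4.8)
The plan is to derive a contradiction from the assumption that such an $X$ is UFO by combining the Dichotomy Principle (Theorem \ref{dicho}) with stability of the UFO property under passage to locally complemented subspaces and ultrapowers (Lemma \ref{minga}, parts (iii) and (iv)). The guiding idea is that if $X$ contains uniformly complemented copies of $\ell_{p_n}^{k_n}$ with $p_n\neq 2$, then after passing to a suitable ultrapower these should coalesce into a genuine complemented copy of an infinite-dimensional $\ell_p$-type space with $p\neq 2$, which cannot live inside a near-Hilbert space.

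First I would invoke Theorem \ref{dicho}: if $X$ is UFO then $X$ is either an $\mathcal L_\infty$-space or a $B$-convex near-Hilbert space with MEP. I would dispose of the two alternatives separately, since the obstruction to each is of a different flavor. If $X$ were an $\mathcal L_\infty$-space, then it would contain $\ell_\infty^n$ uniformly complemented and be of cotype in only a trivial sense; more directly, an $\mathcal L_\infty$-space cannot contain $\ell_p^{k_n}$ uniformly complemented for $p_n$ bounded away from $\infty$, so if the $p_n$ include values forcing nontrivial type this case is excluded. The substantive case is the near-Hilbert one.

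Next, for the $B$-convex near-Hilbert alternative, I would use an ultrapower to manufacture a complemented infinite-dimensional non-Hilbertian subspace. By hypothesis there are projections $P_n\colon X\to \ell_{p_n}^{k_n}$ of uniformly bounded norm onto uniformly complemented copies. Passing to an ultrafilter $\mathscr U$ refining the Fr\'echet filter on the index set and taking $k_n\to\infty$ along $\mathscr U$ (choosing subsequences so that $p_n\to p$ for some fixed $p\neq 2$, which is possible after refining), the ultrapower $X_{\mathscr U}$ contains a complemented subspace isometric to the ultraproduct $[\ell_{p_n}^{k_n}]_{\mathscr U}$, which is an infinite-dimensional $\mathcal L_p$-space and in particular contains $\ell_p$ complemented. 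By Lemma \ref{minga}(iv), $X_{\mathscr U}$ is again a $\lambda$-UFO, hence (by the Dichotomy again, applied to the ultrapower, or directly because near-Hilbertness of $X$ is a local property preserved by ultrapowers) $X_{\mathscr U}$ must itself be near-Hilbert. But $\ell_p$ with $p\neq 2$ is not near-Hilbert, and a complemented subspace of a near-Hilbert space is near-Hilbert, yielding the contradiction.

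The main obstacle I anticipate is ensuring that the uniformly complemented finite-dimensional copies genuinely assemble, along $\mathscr U$, into a \emph{complemented} copy of the ultraproduct inside $X_{\mathscr U}$ with uniformly controlled projection norm, and that the limiting exponent $p$ can be arranged to be different from $2$ rather than drifting to $2$. The first point requires the observation that the family $(P_n)$ induces an operator into $X_{\mathscr U}$ whose restriction to $[\ell_{p_n}^{k_n}]_{\mathscr U}$ is the identity, so the uniform bound on $\|P_n\|$ transfers to the ultrapower projection; this is the standard ultraproduct-of-operators argument. The second point is handled by first refining to a subsequence on which $p_n$ converges to some $p_\infty\neq 2$ (such a subsequence exists precisely because the hypothesis excludes $p_n=2$, though one must rule out the degenerate possibility $p_n\to 2$ by a separate argument: if $p_n\to 2$ one instead uses that $\dist(\ell_{p_n}^{k_n},\ell_2^{k_n})\to\infty$ as $k_n\to\infty$ whenever $p_n$ stays a fixed distance from $2$, and otherwise extracts the relevant failure of near-Hilbertness directly from the type/cotype estimates forced by the $\ell_{p_n}^{k_n}$). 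Once the limiting space is pinned down as a non-Hilbertian $\mathcal L_p$-space, the near-Hilbert property fails and the contradiction is complete.
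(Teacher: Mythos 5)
Your proposal takes a genuinely different route from the paper's, but it contains a gap that cannot be repaired with the tools you chose. The paper's proof is purely local and does not pass through the dichotomy: it notes that $\ell_2\oplus X$ contains $\ell_2^n\oplus\ell_{p_n}^{k_n}$ uniformly complemented, uses \cite[Lemma 6.11]{castplic} to manufacture from these copies of $\ell_{p_n}^{k_n}$ which are \emph{not} uniformly complemented, and then applies \cite[Thm. 4.4]{moreplic}: an UFO cannot contain the same sequence of finite-dimensional spaces in both a uniformly complemented and a non-uniformly complemented position, since uniform extensions of the finite-dimensional isomorphisms would transport the projections. That argument works at dimension $k_n$, where $d(\ell_{p_n}^{k_n},\ell_2^{k_n})=k_n^{|1/p_n-1/2|}$ is large. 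Your argument instead funnels the hypothesis through isomorphic invariants of $X$ and $X_{\mathscr U}$ (near-Hilbertness, type and cotype), and this provably cannot handle the case $p_n\to 2$. Observe that the corollary has content only when the distances $k_n^{|1/p_n-1/2|}$ are unbounded (otherwise $\ell_2$ itself satisfies the hypothesis), and this is compatible with $p_n\to 2$ provided $k_n$ grows fast enough. In that regime: (i) near-Hilbertness of $X$ is \emph{not} violated, since for each fixed $t<2$ the type-$t$ constant of $\ell_{p_n}^{k_n}$ is of order $k_n^{(1/p_n-1/2)-(1/t-1/2)}$, which stays bounded as soon as $1/p_n-1/2<1/t-1/2$, no matter how large $k_n$ is (and similarly for cotype); (ii) the ultrapower route fails even more decisively: by the classical estimate for subspaces of $L_p$, every $m$-dimensional subspace of $\ell_{p_n}^{k_n}$ is within $m^{|1/p_n-1/2|}$ of $\ell_2^m$, and finite-dimensional subspaces of an ultraproduct are limits along $\mathscr U$ of coordinatewise subspaces, so when $p_n\to2$ the space $[\ell_{p_n}^{k_n}]_{\mathscr U}$ is \emph{isometrically} a Hilbert space; the complemented subspace you build in $X_{\mathscr U}$ then carries no obstruction whatsoever. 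Your own patch for this case (``if $p_n\to 2$ one instead uses that $\dist(\ell_{p_n}^{k_n},\ell_2^{k_n})\to\infty$ \dots whenever $p_n$ stays a fixed distance from $2$'') is self-contradictory as written, and by (i) and (ii) no type/cotype computation can replace it: the complementation hypothesis has to be exploited through a projection-transfer argument, which is exactly what \cite[Thm. 4.4]{moreplic} provides.

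A secondary gap is your treatment of the $\mathcal L_\infty$ alternative, which is disposed of by assertion. The fact you need is true, but not for the reason you hint at: cotype is blind to complementation, and $\mathcal L_\infty$-spaces have no nontrivial cotype to be contradicted, so no ``trivial cotype'' argument can close this case. One genuinely needs the projections, e.g.\ via the Grothendieck--Maurey theorem: a projection $P$ from an $\mathcal L_\infty$-space onto a copy of $\ell_{p_n}^{k_n}$ with $p_n\le 2$ satisfies $\pi_2(P)\le C\|P\|$, while $\pi_2(\mathrm{id}_E)=\sqrt{\dim E}$ for every finite-dimensional $E$, a contradiction when $k_n\to\infty$ (with a $(q,2)$-summing variant for $p_n>2$); alternatively, one can invoke the fact that infinite-dimensional complemented subspaces of $\mathcal L_\infty$-spaces are again $\mathcal L_\infty$-spaces. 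The paper's proof requires no such case distinction, since it never invokes Theorem \ref{dicho}. Finally, in the regime where your scheme does apply --- $p_n$ converging along $\mathscr U$ to some $p\neq 2$ with $k_n\to\infty$ --- it is essentially correct, and indeed shows that complementation is superfluous there; but that is a strictly weaker statement than the corollary.
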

\begin{proof} The hypothesis means that $\ell_2\oplus X$ contains
$\ell_2^n\oplus\ell_{p_n}^{k_n}$ uniformly complemented. Using
\cite[Lemma 6.11]{castplic} we get copies of $\ell_{p_n}^{k_n}$
which are not uniformly complemented in $X$. An appeal to
\cite[Thm. 4.4]{moreplic} shows that $X$ is not an UFO.
\end{proof}

This improves the analogous result in \cite[Cor. 4.6]{moreplic}
for fixed $p_n=p \neq 2$. Another intriguing partial result is the
following.

\begin{lemma} If the product of two B-UFO is always an B-UFO then every HUFO is isomorphic to  a Hilbert space.\end{lemma}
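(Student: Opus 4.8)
The plan is to deduce the statement from the result of \cite{castplic} quoted in the proof of Proposition \ref{stable}: if a space contains a subspace of the form $A\oplus A$ with $A\neq \ell_2$, then it contains a non-UFO subspace. Let $X$ be a HUFO space. Since $X\oplus X$ contains itself as a subspace, and this is of the form $A\oplus A$ with $A=X$, it will suffice to prove that $X\oplus X$ is again a HUFO space. Indeed, a HUFO space can contain no non-UFO subspace, so by the quoted result it can contain no $A\oplus A$ with $A\neq\ell_2$; applying this to the copy $A=X$ forces $X$ to be isomorphic to a Hilbert space, which is exactly the conclusion.

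So the real task is to show, under the hypothesis that the product of two B-UFO spaces is B-UFO, that $X\oplus X$ is hereditarily UFO. Let $W\subseteq X\oplus X$ be an arbitrary closed subspace and let $\pi_1,\pi_2$ be the two coordinate projections. I would set $V=\overline{\pi_1(W)}\oplus\overline{\pi_2(W)}$. The spaces $\overline{\pi_i(W)}$ are closed subspaces of the HUFO space $X$, hence are themselves HUFO and in particular B-convex and UFO, i.e. B-UFO; by the standing hypothesis their product $V$ is therefore B-UFO. Moreover $W\subseteq V$, since every $w\in W$ satisfies $w=(\pi_1 w,\pi_2 w)\in\overline{\pi_1(W)}\oplus\overline{\pi_2(W)}$.

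It remains to pass from ``$W$ is a subspace of the B-UFO space $V$'' to ``$W$ is itself UFO'', and this is the delicate point, since UFO is not inherited by arbitrary subspaces; this is precisely where the product hypothesis must do its work. Here I would invoke the characterization of UFO spaces through uniform complementation of finite-dimensional $\ell_p^n$-spaces from \cite[Thm. 4.4]{moreplic}: a B-convex space is UFO if and only if every family of $\ell_{p_n}^{k_n}$ with $p_n\neq 2$ that embeds uniformly is uniformly complemented. If some such family embeds uniformly in $W$, then it embeds uniformly in the larger space $V$, which being UFO forces uniformly bounded projections $P_n\colon V\to\ell_{p_n}^{k_n}$; restricting each $P_n$ to $W$ yields projections $P_n|_W\colon W\to\ell_{p_n}^{k_n}$ of the same norms, because $\ell_{p_n}^{k_n}\subseteq W$. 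Hence every uniformly embedded family of $\ell_{p_n}^{k_n}$ ($p_n\neq 2$) in $W$ is uniformly complemented in $W$, so $W$ is UFO. As $W$ was arbitrary, $X\oplus X$ is HUFO, and the reduction of the first paragraph finishes the proof. The main obstacle is thus the non-hereditary nature of UFO; the device that overcomes it is the observation that a projection onto a subspace lying inside $W$ may be produced in the ambient B-UFO space $V$ and then simply restricted to $W$.
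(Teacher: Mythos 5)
Your first two paragraphs are sound: subspaces of a HUFO space are HUFO and B-convex, so $V=\overline{\pi_1(W)}\oplus\overline{\pi_2(W)}$ is B-UFO under the standing hypothesis, and $W\subseteq V$. The gap is the final step. The ``characterization'' you attribute to \cite[Thm. 4.4]{moreplic} --- that a B-convex space is UFO \emph{if and only if} every uniformly embedded family of $\ell_{p_n}^{k_n}$, $p_n\neq 2$, is uniformly complemented --- is not what that theorem says, and no such equivalence is available. As used throughout this paper, Theorem 4.4 of \cite{moreplic} is a tool for \emph{disproving} UFO-ness: in an UFO, two families of uniformly isomorphic finite-dimensional subspaces must be uniformly complemented simultaneously; it has no converse giving UFO-ness from complementation. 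Worse, the direction you need runs against the paper's own Corollary following Proposition \ref{eledos}, which asserts that a Banach space containing $\ell_{p_n}^{k_n}$ uniformly \emph{complemented} with $p_n\neq 2$ is \emph{not} an UFO. So your restriction-of-projections device, which manufactures uniformly complemented copies of $\ell_{p_n}^{k_n}$ inside $W$, would if anything certify that $W$ fails to be UFO rather than that it is one. The passage from ``$W$ sits inside the B-UFO space $V$'' to ``$W$ is UFO'' is exactly the non-hereditary difficulty you yourself flag, and the proposed device does not overcome it; in fact proving that $X\oplus X$ is HUFO whenever $X$ is HUFO is not something the paper ever attempts.

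The paper's proof avoids this difficulty entirely by never passing to arbitrary subspaces of $X\oplus X$. It uses the specific structure of the bad subspace produced by the argument of \cite{castplic}: if $X$ is HUFO but not Hilbert, then $X\oplus X$ contains a non-UFO subspace of the particular form $\sum G_k\oplus\sum F_k$, where $\sum G_k\subset X$ and $\sum F_k\subset X$, the non-UFO-ness coming precisely from \cite[Thm. 4.4]{moreplic} because the $F_k$ contain badly complemented subspaces uniformly isomorphic to the well-complemented $G_k$. Since $X$ is HUFO, both $\sum G_k$ and $\sum F_k$ are B-UFO, so the product hypothesis makes their direct sum B-UFO --- a direct contradiction. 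In other words, the hypothesis is applied to the two halves of one concrete bad subspace, which happens to be a direct sum of a subspace of the first copy of $X$ and a subspace of the second; no hereditary passage, and no characterization of UFO-ness by complementation, is needed. To salvage your scheme you would have to restrict attention to subspaces $W$ of exactly that product form, at which point your $V$ equals $W$ and the argument collapses to the paper's.
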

\begin{proof} Let $X$ be HUFO; then $X\oplus X$
is UFO. If $X$ is not Hilbert, following the proof  in
\cite{castplic}, $X\oplus X$ must contain a non-UFO subspace with
FDD of the form $\sum (G_k\oplus F_k)$, where $G_k\subset X$,
$F_k\subset X$ and the $F_k$ containing badly complemented
subspaces uniformly isomorphic to $G_k$. The space $\sum G_k
\oplus \sum F_k \subset X\oplus X$ is not UFO by \cite[Th.
4.4]{moreplic}. But since $X$ is HUFO, both $\sum G_k$ and $\sum
F_k$ must be UFO.
\end{proof}

To be an UFO is by no means a 3-space property (see
\cite{castgonz}) because non-trivial twisted Hilbert spaces
--i.e., spaces $Z$ containing an uncomplemented copy of $\ell_2$
for which $Z/\ell_2$ is isomorphic to $\ell_2$-- cannot be B-UFO
since the MEP property would make every copy of $\ell_2$
complemented. Such twisted sum spaces exist: see \cite{castgonz}
for general information and examples. The quotient of two
$\mathcal L_\infty$ spaces is an $\mathcal L_\infty$-space, hence
UFO. Nevertheless, it was shown in \cite{castplic,accgm} that
$\ell_\infty/c_0$ is not extensible, which implies that the
quotient of two extensible spaces does not have to be extensible.
Let us show that the quotient of two UFO is not necessarily an
UFO.
\begin{lemma} The space $\ell_\infty/\ell_2$ is not UFO:
\end{lemma}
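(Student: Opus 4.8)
The plan is to invoke the Dichotomy Principle (Theorem \ref{dicho}): any UFO is either an $\mathcal L_\infty$-space or a $B$-convex near-Hilbert space with MEP. Hence it suffices to show that $X := \ell_\infty/\ell_2$ is \emph{neither} an $\mathcal L_\infty$-space \emph{nor} $B$-convex; the second of these already kills the whole near-Hilbert horn of the dichotomy. Here $\ell_2$ is regarded as a closed subspace of $\ell_\infty$ via any isomorphic embedding $j\colon \ell_2\to\ell_\infty$ with closed range (such embeddings exist since $\ell_\infty$ is isometrically universal for separable spaces); as will be clear, both arguments use only that $\ell_2$ is reflexive and that $\ell_\infty^\ast$ is an $L_1(\mu)$-space, so the conclusion is independent of the chosen embedding.

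First I would show $X$ is not $B$-convex. Since reflexivity is a three-space property and $\ell_\infty$ is not reflexive while $\ell_2$ is, the quotient $X$ cannot be reflexive; consequently $X^\ast=\ell_2^\perp$ is a non-reflexive closed subspace of $\ell_\infty^\ast$. As $\ell_\infty^\ast$ is isometrically an $L_1(\mu)$-space, the Kadec--Pe\l czy\'nski dichotomy forces every non-reflexive subspace of it to contain an isomorphic copy of $\ell_1$, so $X^\ast$ contains $\ell_1^n$ uniformly and is not $B$-convex. Since $B$-convexity is self-dual, $X$ is not $B$-convex either, which eliminates the second alternative of Theorem \ref{dicho}.

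Next I would show $X$ is not an $\mathcal L_\infty$-space, which is the crux. The idea is to exploit the extension sequence
\[
0\longrightarrow \ell_2\stackrel{j}\longrightarrow \ell_\infty \stackrel{q}\longrightarrow X\longrightarrow 0 .
\]
If $X$ were an $\mathcal L_\infty$-space then, by the local projectivity of $\mathcal L_\infty$-spaces (Kalton), this sequence would be locally split; that is, $\ell_2=\ell_2^{\perp\perp}$ would be complemented in $\ell_\infty^{\ast\ast}$. But $\ell_\infty^{\ast\ast}=L_\infty(\mu)$ has the Dunford--Pettis property, whereas $\ell_2$ is not isomorphic to a complemented subspace of any space with the Dunford--Pettis property: a projection onto a copy of $\ell_2$ would be weakly compact, hence completely continuous, contradicting the fact that the identity of $\ell_2$ sends the weakly null sequence $(e_n)$ to a non-norm-null sequence. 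This contradiction shows $X$ is not $\mathcal L_\infty$, and together with the previous paragraph and Theorem \ref{dicho} it proves that $X$ is not UFO.

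The main obstacle is precisely this last step: the failure of $B$-convexity is soft (three-space reflexivity plus Kadec--Pe\l czy\'nski), but ruling out the $\mathcal L_\infty$ alternative requires the structural input that $\mathcal L_\infty$ quotients yield locally split sequences, together with the non-complementation of $\ell_2$ in an $L_\infty(\mu)$-space. An alternative route for this step, should the local-splitting input be unavailable, is Grothendieck's theorem (every operator from an $\mathcal L_\infty$-space into $\ell_2$ is $2$-summing): one would then only need to produce a single non-$2$-summing operator $X\to\ell_2$, for instance by locating a complemented copy of $\ell_2$ in $X$ and projecting onto it.
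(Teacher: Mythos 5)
Your first step (ruling out $B$-convexity) is correct, though much heavier than needed: since $B$-convexity is itself a three-space property, the $B$-convexity of $\ell_2$ and of $\ell_\infty/\ell_2$ would force $\ell_\infty$ to be $B$-convex, which it is not; this one-line argument (the one the paper uses) avoids Kadec--Pe\l czy\'nski and the self-duality of $B$-convexity altogether. The fatal problem is your second step. The principle you invoke --- ``if the quotient $Z$ of $0\to Y\to X\to Z\to 0$ is an $\mathcal L_\infty$-space, then the sequence locally splits'' --- is false; you have dualized Kalton's results the wrong way. What is true is that $\mathcal L_\infty$-spaces are locally \emph{injective} (they are locally complemented as \emph{subspaces} of any superspace), while it is $\mathcal L_1$-spaces that are locally \emph{projective} (sequences whose \emph{quotient} is $\mathcal L_1$ locally split, because $\ell_1^n$'s lift almost isometrically through quotient maps; $\ell_\infty^n$'s do not). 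A concrete counterexample to your principle: take any quotient map $q:\ell_1\to c_0$ and the sequence $0\to \ker q\to \ell_1\stackrel{q}\to c_0\to 0$, whose quotient is even $c_0$. If it locally split, the dual sequence $0\to \ell_1\stackrel{q^*}\to\ell_\infty\to(\ker q)^*\to 0$ would split, making $\ell_1$ isomorphic to a complemented subspace of $\ell_\infty$ --- impossible, since $\ell_\infty$ is prime. Note also that your principle proves far too much even in your own setting: since $\ell_2$ is reflexive, local complementation of $\ell_2$ in $\ell_\infty$ would give an actual projection of $\ell_\infty$ onto $\ell_2$ (restrict the projection $\ell_\infty^{**}\to\ell_2$ to $\ell_\infty$), contradicting primeness of $\ell_\infty$ directly with no need of the Dunford--Pettis argument; more globally it would yield $\Ext(Z,Y)=0$ for every $\mathcal L_\infty$-space $Z$ and every reflexive $Y$, which is false. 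Your fallback via Grothendieck's theorem is also incomplete: it requires locating a complemented copy of $\ell_2$ inside $\ell_\infty/\ell_2$, which you do not establish and which is far from obvious --- e.g.\ for an embedding of $\ell_2$ supported on a coordinate subset $N_1$ the quotient is $(\ell_\infty(N_1)/\ell_2)\oplus\ell_\infty$, and the problem just reproduces itself.

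For comparison, the paper's actual argument for the $\mathcal L_\infty$ horn is different and does go through: using a projection $p:\ell_\infty^{**}\to\ell_\infty$ and the reflexivity of $\ell_2$, a diagram chase shows that $\ell_\infty/\ell_2$ is complemented in its bidual, so that if it were $\mathcal L_\infty$ it would be injective; injectivity is then refuted by embedding $j:\ell_2\to L_1(0,1)$, extending the map induced by a nontrivial twisted Hilbert space $0\to\ell_2\to Z\to\ell_2\to 0$, and deriving a contradiction with the Lindenstrauss lifting principle (the copy of $\ell_2$ under an $L_1$ quotient must be complemented, while in the twisted sum it is not). You would need to replace your second step by an argument of this kind; as it stands, the central implication of your proof is simply not available.
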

\begin{proof} Indeed, since $B$-convexity is a $3$-space property \cite{castgonz},
it cannot be $B$-convex. Let $p:\ell_\infty^{**}\to \ell_\infty$
be a projection through the canonical embedding $ \ell_\infty\to
\ell_\infty^{**}$. The commutative diagram
$$\begin{CD}
0@>>>\ell_2 @>>> \ell_\infty @>>> \ell_\infty/\ell_2 @>>> 0 \\
&& @| @AAA @AAuA \\ 0@>>>\ell_2 @>>> \ell_\infty^{**} @>>>
\ell_\infty^{**}/\ell_2 @>>>0
\end{CD}$$
immediately implies $\ell_\infty/\ell_2$ is complemented in its
bidual. Thus, if $\ell_\infty/\ell_2$ was an $\mathcal
L_\infty$-space then it would be injective. Let us show it is not:
Take an embedding $j: \ell_2\to L_1(0,1)$ and consider the
commutative diagram
$$\begin{CD}
0@>>>\ell_2 @>>> \ell_\infty @>>> \ell_\infty/\ell_2 @>>> 0 \\
&& @| @AAA @AAuA \\ 0@>>>\ell_2 @>>> Z @>>> \ell_2  @>>>0.
\end{CD}$$
in which the lower sequence is any nontrivial twisted sum of
$\ell_2$. If there is an operator $U: L_1(0,1) \to
\ell_\infty/\ell_2$ such that $Uj=u$ then one would get a
commutative diagram
$$\begin{CD}
0@>>>\ell_2 @>>> \ell_\infty @>>> \ell_\infty/\ell_2 @>>> 0 \\
&& @| @AAA @AAUA \\
0@>>>\ell_2 @>>> X @>>> L_1(0,1) @>>>0\\
 && @| @AAA @AAjA\\
0 @>>>\ell_2 @>>> Z @>>> \ell_2 @>>>0,
\end{CD}$$which is impossible since the $\ell_2$ subspace in the middle exact sequence must be complemented
by the Lindenstrauss lifting principle \cite{lindl1} while in the
lower sequence it is not.\end{proof} Nevertheless, when the
subspace is locally complemented the quotient must be UFO:

\begin{proposition}\label{X/Yloc} Let $X$ be an UFO and $Y$ a locally complemented subspace of some ultrapower $X_\U$ of $X$.
Then $X_\U/Y$, $Y \oplus X$ and $X\oplus (X_\U/Y)$ are all UFO.
\end{proposition}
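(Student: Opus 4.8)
\emph{Plan.} Two facts are immediate: $X_\U$ is an UFO by Lemma \ref{minga}(iv), and since $Y$ is locally complemented in $X_\U$ it is an UFO by Lemma \ref{minga}(iii). Throughout I will use freely that a complemented subspace is locally complemented, that every space is locally complemented in its bidual, that the diagonal copy of $X$ is locally complemented in $X_\U$, and that local complementation is transitive and stable under finite direct sums. The organising observation is the following general principle, of which the claim about $X_\U/Y$ is the instance $Z=X_\U$, $N=Y$: \emph{if $Z$ is an UFO and $N$ is a locally complemented subspace of $Z$, then $Z/N$ is an UFO.}

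To prove this principle I would pass to biduals. Using $(Z/N)^{*}=N^{\perp}$ one gets $(Z/N)^{**}\cong Z^{**}/N^{\perp\perp}$, and local complementation of $N$ means that $N^{\perp\perp}$ is complemented in $Z^{**}$, say $Z^{**}=N^{\perp\perp}\oplus V$; hence $(Z/N)^{**}\cong V$ is a complemented subspace of $Z^{**}$. Now $Z^{**}$ is an UFO, since by Lemma \ref{minga}(iv) it is even extensible and extensible spaces are UFO. A complemented --- hence locally complemented --- subspace of an UFO is an UFO by Lemma \ref{minga}(iii), so $(Z/N)^{**}$ is an UFO; finally $Z/N$ is locally complemented in its bidual, so one further application of Lemma \ref{minga}(iii) yields that $Z/N$ is an UFO. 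Taking $Z=X_\U$ and $N=Y$ disposes of the first space.

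For the remaining two I would reduce everything to a single statement. As $X$ and $Y$ are both locally complemented in $X_\U$, the sum $Y\oplus X$ is locally complemented in $X_\U\oplus X_\U$. Moreover $0\oplus Y$ is locally complemented in $X_\U\oplus X_\U$ with $(X_\U\oplus X_\U)/(0\oplus Y)\cong X_\U\oplus(X_\U/Y)$, so the principle shows $X_\U\oplus(X_\U/Y)$ is an UFO \emph{as soon as} $X_\U\oplus X_\U$ is; and then $X\oplus(X_\U/Y)$, being locally complemented in $X_\U\oplus(X_\U/Y)$, is an UFO by Lemma \ref{minga}(iii). Thus both statements follow once one knows that $X_\U\oplus X_\U$ is an UFO.

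This last point is the main obstacle, and it is exactly where the (open) question of products of UFO threatens to intrude; what one must exploit is that the two factors are the \emph{same} ultrapower of a single UFO. I would split along the dichotomy of Theorem \ref{dicho}. If $X$ is an $\mathcal L_\infty$-space then so is $X_\U$, and so is the finite sum $X_\U\oplus X_\U$, which is therefore an UFO and we are done. The delicate case is $X$ B-convex near-Hilbert with MEP: here $X_\U$ is superreflexive, so local complementation upgrades to genuine complementation and in particular both $X$ and $Y$ are complemented in $X_\U$. My plan would then be to realise $X_\U\oplus X_\U$ as a complemented subspace of a single, manifestly UFO ultrapower $W$ of $X_\U$ --- arranging $W\cong W\oplus W$ by an absorption device in the spirit of Proposition \ref{eledos} --- and to conclude by Lemma \ref{minga}(iii),(iv). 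I expect this absorption to be the crux: unlike the $\ell_2$ summand swallowed in Proposition \ref{eledos}, making a single ultrapower absorb a copy of its own square is precisely the finite-representability phenomenon whose failure would obstruct being an UFO (it is, for instance, the open point for Tsirelson's $\mathcal T_2$), so this is the step where the real work, and any genuine use of the hypotheses on $X$, must enter.
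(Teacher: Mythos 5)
Your treatment of $X_\U/Y$ is correct and is exactly the paper's argument: the paper also notes that $X_\U/Y$ is locally complemented in $(X_\U/Y)^{**}$, which is complemented in $X_\U^{**}$, and $X_\U^{**}$ is UFO by Lemma \ref{minga}; your computation with $Y^{\perp\perp}$ merely makes this explicit. Your reduction of the remaining two assertions to the single claim that $X_\U\oplus X_\U$ is UFO is also sound, and essentially matches the paper (which handles $X\oplus(X_\U/Y)$ by embedding it locally complementably in $X_\U\oplus X_\U^{**}$ rather than by your quotient trick).

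The gap is that you never prove that $X_\U\oplus X_\U$ is UFO, and your sketched plan for the $B$-convex case would not close it. First, the assertion that $X_\U$ is then superreflexive is unjustified: Theorem \ref{dicho} only gives near-Hilbert plus MEP, and the paper explicitly lists ``Is every B-UFO reflexive?'' as an open question. Second, and more fundamentally, the absorption device of Proposition \ref{eledos} is not available here: there $\ell_2$ is absorbed because $X_\U\cong\ell_2\oplus Z$ by $B$-convexity, whereas making an ultrapower absorb a copy of itself amounts to $X_\U\cong X_\U\oplus X_\U$, which is (more than) what is to be proved. Indeed, since $X_\U\oplus X_\U=(X\oplus X)_\U$ isometrically for the $\ell_\infty$-sum, and since a space is UFO if and only if its ultrapowers are (Lemma \ref{minga} (iii) and (iv), together with local complementation of a space in its ultrapowers), your missing claim is \emph{equivalent} to ``$X\oplus X$ is UFO whenever $X$ is'', i.e.\ to the diagonal case of the basic open question on products of B-UFO raised in Section 3; moreover the proof of the unlabelled Lemma following Proposition \ref{eledos} uses only squares, so a positive answer to that diagonal case would already force every HUFO to be Hilbert, a conclusion the paper obtains only under extra hypotheses (Propositions \ref{locallyminimal} and \ref{stable}). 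So what is missing is not a routine verification.

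You should know, however, that at this precise point the paper does no better: its entire justification of the second and third assertions is the unproved parenthetical statement that $X_\U\oplus X_\U$ and $X_\U\oplus X_\U^{**}$ ``are UFO'', and the $\mathcal L_\infty$ branch of your dichotomy argument is the only case in which that statement is clearly true. Your proposal thus reproduces the paper's proof of the first assertion and its reductions for the other two, and then stalls honestly exactly where the printed proof makes an unsupported claim.
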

\begin{proof} The space $X_\U/Y$ is UFO since it is locally complemented in $(X_\U/Y)^{**}$, which in turn is complemented in
$X_\U^{**}$. The space $Y\oplus X$ is locally complemented in
$X_{\mathscr U}\oplus X_{\mathscr U}$, which is an UFO, while
$X\oplus (X_\U/Y)$ is locally complemented in $X_{\mathscr
U}\oplus X_{\mathscr U}^{**}$, which is also UFO.
\end{proof}

%Notice that the previous statement obviously holds in the particular case in which $E$ is locally complemented in $X$.\\

A couple of questions about B-UFO for which we  conjecture an
affirmative answer are: Is every B-UFO  reflexive? Is every B-UFO
isomorphic to its square? It can be observed that if every B-UFO
contain a subspace of the form $A\oplus A$ for
infinite-dimensional $A$ then every HUFO must be Hilbert.\\

Turning our attention to the other end of the spectrum, spaces
which do not contain subspaces isomorphic to its square, we find
the Hereditarily Indecomposable (HI, in short) spaces. Recall that
a Banach space is said to be HI if no subspace admits a
decomposition in the direct product of two infinite dimensional
subspaces. Hereditarily indecomposable UFO exist after the
constructions of HI $\mathcal L_\infty$-spaces obtained by Argyros
and Haydon \cite{argyhay}, and later by Tarbard \cite{tarb}. Let
us show that, in some sense, spaces which are simultaneously HI
and UFO are close to automorphic.

\begin{lemma} For HI spaces, extensible and automorphic are
equivalent notions.
\end{lemma}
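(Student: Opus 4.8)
The plan is to prove the two implications separately. The implication automorphic $\Rightarrow$ extensible is immediate from the basic diagram above, since an automorphic space is uniformly extensible and hence extensible; so the whole content lies in showing that an extensible HI space $X$ is automorphic. To that end, I would fix a closed subspace $E\subseteq X$ and an into isomorphism $\tau:E\to X$ for which $X/E$ and $X/\tau E$ have the same density character, and aim to produce an automorphism of $X$ restricting to $\tau$ on $E$. First I would use extensibility to extend $\tau$ to an operator $T:X\to X$, and then invoke the Gowers--Maurey structure theorem for HI spaces to write $T=\lambda\,\mathrm{Id}_X+S$ with $S$ strictly singular.

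The key observation is that $\lambda\neq 0$ whenever $E$ is infinite dimensional: otherwise $\tau=T|_E=S|_E$ would be a strictly singular isomorphism onto its image, forcing $\dim E<\infty$. Since a strictly singular operator is inessential, $\lambda\,\mathrm{Id}_X+S$ is then a Fredholm operator of index zero; in particular $\operatorname{ran}T$ has finite codimension and $\ker T$ is finite dimensional. Because $\tau$ is injective we also have $\ker T\cap E=0$, and one checks that the map $x+E\mapsto Tx+TE$ has finite-dimensional kernel and finite-codimensional range, so that $X/E$ and $X/\tau E=X/TE$ differ only by finite-dimensional pieces; thus the density-character hypothesis is in fact automatic in this situation (and is precisely the obstruction that an extending automorphism would have to respect).

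The heart of the argument is the finite-rank correction. Since $\ker T$ is finite dimensional and meets $E$ trivially, I would use Hahn--Banach to choose a projection $P$ of $X$ onto $\ker T$ whose kernel contains $E$; then, fixing a closed complement $G$ of $\operatorname{ran}T$ and an isomorphism $\phi:\ker T\to G$ (which exists precisely because the index is zero), I set $R=\phi P$. This $R$ is finite rank and $R|_E=0$, and on the decompositions $X=\ker P\oplus\ker T$ and $X=\operatorname{ran}T\oplus G$ the operator $T+R$ carries $\ker P$ isomorphically onto $\operatorname{ran}T$ and $\ker T$ isomorphically onto $G$; hence $T+R$ is a bijection, i.e.\ an automorphism, and $(T+R)|_E=T|_E=\tau$ because $E\subseteq\ker P\subseteq\ker R$. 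The finite-dimensional case of $E$ is handled by the identical correction starting from $T=\mathrm{Id}_X+V$, where $V$ is any finite-rank extension of $\tau-i_E$ for the inclusion $i_E:E\to X$; in that case extensibility is not even needed.

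The step I expect to be the main obstacle is the passage from ``$T$ is Fredholm of index zero and injective on $E$'' to ``$T$ can be corrected to an automorphism without disturbing its values on $E$'': one must simultaneously arrange that the correcting operator vanishes on $E$ and realizes an isomorphism from $\ker T$ onto a complement of $\operatorname{ran}T$, and it is the index-zero property---guaranteed by strict singularity of $S$ together with $\lambda\neq 0$---that makes these two finite-dimensional requirements compatible. A secondary point to verify carefully is that the representation $T=\lambda\,\mathrm{Id}_X+S$ is available in the full generality of the HI space under consideration.
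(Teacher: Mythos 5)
Your proof is correct and follows essentially the same route as the paper: extend $\tau$ to an operator $T$ on $X$ by extensibility, use HI operator theory to conclude that $T$ is Fredholm of index zero, and then perform a finite-rank correction vanishing on $E$ to turn $T$ into an automorphism extending $\tau$. The paper's proof is a three-line sketch of exactly this; your write-up supplies the details it omits (the projection $P$ with $E\subseteq\ker P$, the isomorphism $\phi:\ker T\to G$ realizing the index-zero condition, and the separate treatment of finite-dimensional $E$). The one substantive difference is the lemma invoked at the Fredholm step: you use the Gowers--Maurey representation $T=\lambda\,\mathrm{Id}_X+S$ with $S$ strictly singular, which is guaranteed only for \emph{complex} HI spaces and can fail for real ones, whereas the paper invokes the weaker dichotomy that every operator on an HI space (real or complex) is either strictly singular or Fredholm of index zero. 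This settles the ``secondary point'' you flagged yourself: since $T$ restricts to an isomorphism on the infinite-dimensional subspace $E$, it is not strictly singular, hence Fredholm of index zero by the dichotomy, and your correction argument then goes through verbatim without ever needing the scalar-plus-strictly-singular form.
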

\begin{proof} Operators on HI spaces are either strictly singular
of Fredholm with index $0$. The extension of an embedding cannot
be strictly singular, so it is an isomorphism between two
subspaces with the same finite codimension, which means that some
other extension of the embedding is an automorphism of the whole
space.
\end{proof}

Hence, taking into account Lemma 1.1 (ii) one gets:

\begin{proposition} A reflexive space which is HI
and UFO is automorphic.\end{proposition}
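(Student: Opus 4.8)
The plan is to obtain the proposition as an immediate consequence of chaining the two results that precede it, exactly as the transitional phrase ``Hence, taking into account Lemma 1.1 (ii) one gets'' suggests. First I would record the only structural input reflexivity provides: a reflexive space $X$ satisfies $X = X^{**}$, so $X$ is (trivially) $1$-complemented in its bidual, the identity serving as the projection.

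Next I would feed this into Lemma 1.1 (ii). Since $X$ is UFO it is a $\lambda$-UFO for some $\lambda \geq 1$, and being $1$-complemented in its bidual means we may apply that lemma with $\mu = 1$. This yields that $X$ is $\lambda \cdot 1 = \lambda$-extensible, and in particular extensible. Thus reflexivity together with the UFO hypothesis promotes the space from merely finitely extensible to genuinely extensible.

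Finally I would invoke the hypothesis that $X$ is HI together with the immediately preceding Lemma, which asserts that for HI spaces the notions \emph{extensible} and \emph{automorphic} coincide. Since we have just shown $X$ is extensible and $X$ is HI by assumption, that lemma delivers at once that $X$ is automorphic, completing the argument. All the delicate work --- the fact that operators on HI spaces are strictly singular or Fredholm of index $0$, so that the extension of an embedding is forced to be an isomorphism onto a subspace of equal finite codimension and hence adjustable to an automorphism --- is already absorbed into that preceding lemma, so it need not be repeated here.

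I do not anticipate any genuine obstacle: the statement is a one-line corollary obtained by concatenating Lemma 1.1 (ii) (which converts the $\lambda$-UFO property into $\lambda$-extensibility using complementation in the bidual, supplied for free by reflexivity) with the HI extensible-equals-automorphic lemma. The only point requiring the slightest care is matching the complementation hypothesis of Lemma 1.1 (ii), and reflexivity makes that trivial with constant $\mu = 1$.
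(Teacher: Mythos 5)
Your proof is correct and is precisely the argument the paper intends: reflexivity gives $1$-complementation in the bidual, Lemma 1.1 (ii) then upgrades the UFO property to extensibility, and the preceding lemma (extensible $\Leftrightarrow$ automorphic for HI spaces) finishes. This matches the paper's own (implicit) proof, signalled by the phrase ``Hence, taking into account Lemma 1.1 (ii) one gets.''
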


In particular, HI spaces which also are HUFO must be automorphic.
A variation in the argument shows:

\begin{proposition}  Assume that  $X$ is an $\mathcal
L_\infty$ space with the property that every operator $Y\to X$
from a subspace $i:Y \to X$ has the form $\lambda i + K$ with $K$
compact. Then $X$ is automorphic.
\end{proposition}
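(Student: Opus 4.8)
The plan is to follow the pattern of the preceding lemma on HI spaces: extend $\tau$ to an operator $T$ on all of $X$ that is Fredholm of index zero, and then correct $T$ by a finite-rank operator vanishing on $E$ to obtain an automorphism. The one delicate point --- and the main obstacle --- is that an $\mathcal L_\infty$-space need not be extensible (recall $C[0,1]$), so $\tau$ itself may fail to extend. The hypothesis is exactly what rescues the argument: it forces $\tau$ to differ from a scalar multiple of the inclusion only by a compact operator, and compact operators into an $\mathcal L_\infty$-space \emph{do} extend.

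Concretely, I fix a closed $E\subset X$ and an into isomorphism $\tau\colon E\to X$ for which $X/E$ and $X/\tau E$ have the same density character. I may assume $E$ is infinite-dimensional, the finite-dimensional case following the same scheme with $T=\mathrm{id}_X+R$ for a finite-rank extension $R$ of $\tau-i$. By hypothesis $\tau=\lambda i+K$, with $i\colon E\hookrightarrow X$ the inclusion and $K$ compact; since $\tau$ is an isomorphism on the infinite-dimensional $E$ while $K$ is compact, necessarily $\lambda\neq 0$. Because $X$ is $\mathcal L_\infty$ and hence UFO, Proposition \ref{compext} extends $K$ to a compact operator $\tilde K\colon X\to X$. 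Setting $T=\lambda\,\mathrm{id}_X+\tilde K$ yields $T|_E=\lambda i+K=\tau$, and $T$, being a compact perturbation of the invertible operator $\lambda\,\mathrm{id}_X$, is Fredholm of index $0$.

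It then remains to turn $T$ into an automorphism without altering it on $E$. Put $N=\ker T$, of finite dimension $n$; since $T|_E=\tau$ is injective, $N\cap E=0$, and $T(X)$ is closed of codimension $n$. Using that $E\oplus N$ is closed I obtain, by Hahn--Banach, functionals $\phi_1,\dots,\phi_n\in X^*$ that vanish on $E$ and are dual to a basis $n_1,\dots,n_n$ of $N$, and I fix a basis $c_1,\dots,c_n$ of a complement $C$ of $T(X)$. The finite-rank operator $F(x)=\sum_{j=1}^n\phi_j(x)\,c_j$ vanishes on $E$, and a direct verification shows that $S=T+F$ is bijective: if $Sx=0$ then $Tx=-Fx\in T(X)\cap C=0$, whence $x\in N$ and then $x=0$; and any $y=u+c$ with $u\in T(X)$ and $c\in C$ is reached by correcting a preimage of $u$ by a suitable vector of $N$. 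By the open mapping theorem $S$ is an automorphism, and $S|_E=\tau$.

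Finally I note that the density character hypothesis is automatically consistent with this construction: for a Fredholm index-zero $T$ injective on $E$, both $X/E$ and $X/\tau E$ differ from $X/(E+N)$ by a finite-dimensional space, and so share its density character; thus no admissible pair $(E,\tau E)$ can obstruct the argument. The only step that genuinely uses the geometry of $X$ is the extension of the compact part $K$, supplied by the compact extensibility of UFO spaces; once $T$ is in hand, the remaining finite-rank surgery is the routine manoeuvre already used in the HI lemma.
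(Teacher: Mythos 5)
Your proof is correct, and it shares its crux with the paper's: both hinge on the fact that the compact part $K$ extends to a compact operator on all of $X$ (you get this from Proposition \ref{compext} since $\mathcal L_\infty$-spaces are UFO; the paper gets it from Lindenstrauss's theorem on extending compact operators into $\mathcal L_\infty$-spaces, cited after that proposition). Where you genuinely diverge is in how the extension $\lambda\,\mathrm{id}_X+\tilde K$ is upgraded to an automorphism. The paper argues in two steps: the hypothesis makes $X$ extensible, and it \emph{also} forces $X$ to be HI (an assertion it does not prove); it then invokes its earlier Lemma that for HI spaces extensible and automorphic are equivalent, whose proof rests on the Gowers--Maurey fact that operators on HI spaces are either strictly singular or Fredholm of index $0$, and which leaves the final correction implicit (``some other extension of the embedding is an automorphism''). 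You bypass HI entirely: since your extension is explicitly a compact perturbation of $\lambda\,\mathrm{id}_X$ with $\lambda\neq 0$, classical Riesz theory already gives Fredholm of index $0$, and you carry out in full the finite-rank surgery $S=T+F$, with $F$ vanishing on $E$ and mapping $\ker T$ isomorphically onto a complement of $T(X)$. What your route buys is a self-contained, more elementary argument needing neither the HI deduction nor HI operator theory; what the paper's route buys is brevity, by recycling a lemma already established. One small remark: your closing paragraph on density characters is superfluous --- equality of density characters is a hypothesis in the definition of automorphic, not something to be verified, and in any case it follows at once from the existence of your automorphism $S$, which carries $E$ onto $\tau E$.
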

\begin{proof} Since compact operators on an $\mathcal L_\infty$-space can be extended to larger superspaces, the hypothesis
implies that the space is extensible. The hypothesis also yields
that $X$ must be HI.
\end{proof}

These two propositions suggest a way to obtain a counterexample
for the automorphic space problem. We however conjecture that
an HI space cannot be automorphic.\\

\section{Counterexamples}

As we said before, there are not many examples known of either
near-Hilbert spaces or spaces with MEP. For a moment, let us focus
on $B$-convex spaces. A good place to look for them is among weak
Hilbert spaces. Recall that a Banach space $X$ is said to be weak
Hilbert if there exist constants $\delta, C$ such that every
finite dimensional subspace $F$ contains a subspace $G$ with $\dim
G\geq \delta \dim F$ so that $\dist(G, \ell_2^{\dim G})\leq C$. It
is well-known that weak Hilbert spaces are $B$-convex and
near-Hilbert, although it is not known if they must have MEP.
Tsirelson's 2-convexified space $\mathcal T_2$ \cite{casshu} is a
weak Hilbert type 2 space, but we do not know whether it is an UFO.
Nevertheless, since $\mathcal T_2$ is isomorphic to its square we
can apply Proposition \ref{stable} to conclude it is not HUFO.
Since subspaces of weak Hilbert spaces are weak Hilbert, we obtain that
not all weak Hilbert spaces are UFO. This answers the question
left open in \cite[p.2131]{castplic} of whether weak Hilbert
spaces must be UFO. Argyros, Beanland and Raikoftsalis \cite{abr}
have recently constructed a weak Hilbert space $X_{abr}$ with an
unconditional basis in which no disjointly supported subspaces are
isomorphic (such spaces are called tight by support in
\cite{fererose}). Clearly this space does not contain a copy of
$\ell_2$. By the criterion of Casazza used by Gowers in its
solution to Banach's hyperplane problem \cite{Ghyperplanes},
tightness by support implies that $X_{abr}$ is not isomorphic to
its proper subspaces, and in particular is not isomorphic to its
square. It remains open whether this space
is an UFO or even a HUFO.\\

``Less Hilbert" than weak Hilbert spaces are the asymptotically
$\ell_2$ spaces, still to be considered since HUFO spaces are of
this type. See also \cite{anis} for related properties.
Asymptotically $\ell_2$ HI spaces have been constructed by
different people. The space of Deliyanni and Manoussakis \cite{deliman}
cannot be HUFO since it has the property that $c_0$ is finitely
represented in every subspace, so it is locally minimal. Apply now
Proposition \ref{locallyminimal}. We do not know however if this
space or if the asymptotically $\ell_2$ HI space constructed by
Androulakis and Beanland \cite{androbean} are UFO. In
\cite{casjohn} Casazza, Garc\'ia and Johnson construct an
asymptotically $\ell_2$ space without BAP; which, therefore,
cannot be UFO. Actually, the role of the BAP in these UFO affairs
is another point not yet understood. Johnson and Szankowski
introduce in \cite{johnszank} HAPpy spaces as those Banach spaces
all whose subspaces have the approximation property. Szankowski
had already shown in \cite{szan} that HAPpy spaces are
near-Hilbert, while Reinov \cite{rei} showed the existence of a
near-Hilbert non-HAPpy space. This motivates the following question: Is every B-UFO
space HAPpy? Another construction of Johnson and Szankowski in
\cite{johnszank} yields a HAPpy asymptotically $\ell_2$ space with
the property that every subspace is isomorphic to a complemented
subspace. This is a truly wonderful form of not being extensible;
and since the space is reflexive, of not being
UFO.\\

Passing to more general HI spaces, the Argyros-Deliyanni
asymptotically $\ell_1$ space \cite{argdeli} was shown in
\cite{moreplic} not to be UFO. Argyros and Tollias \cite[Thm.
11.7]{argtol} produce an HI space X so that both $X^*$ and
$X^{**}$ are HI and $X^{**}/X = c_0(I)$; they also produce
\cite[Thm. 14.5]{argtol} for every Banach space $Z$ with basis not
containing $\ell_1$ an asymptotically $\ell_1$ HI space $X_Z$ for
which $Z$ is a quotient of $X_Z$. Argyros and Tollias extend they
result \cite[Thm. 14.9]{argtol} to show that for every separable
Banach space $Z$ not containing $\ell_1$ there is an HI space
$X_Z$ so that $Z$ is a quotient of $X_Z$. The space $X_Z$ can be
obtained applying a classical result of Lindenstrauss (see
\cite{lindtzaf}) asserting that every separable $Z$ is a quotient
$E^{**}/E$ of a space $E^{**}$ with basis. Apply the previous
result to $E^{**}$ to obtain an asymptotically $\ell_1$ HI space
$X_Z$ such that $X_Z^{**}/X_Z = E^{**}$ which makes also $Z$ a
quotient of $X_Z$. None of them can be UFO:

\begin{lemma} Asymptotically $\ell_1$ spaces cannot be UFO.
\end{lemma}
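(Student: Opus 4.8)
The plan is to invoke the Dichotomy principle (Theorem \ref{dicho}) and to show that an asymptotically $\ell_1$ space can satisfy neither of its two alternatives. Throughout, let $X$ be asymptotically $\ell_1$ with constant $C$, so that for each $n$ there is a finite-codimensional subspace $X_n\subset X$ all of whose $n$-dimensional subspaces $G$ satisfy $\dist(G,\ell_1^{\dim G})\leq C$. In particular, choosing any $n$-dimensional $G\subset X_n$ shows that $X$ contains $\ell_1^n$ uniformly, with constant $C$ independent of $n$.

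First I would rule out the second branch of the dichotomy. Since $X$ contains $\ell_1^n$ uniformly it has trivial type ($p(X)=1$), and therefore it is neither $B$-convex nor near-Hilbert; a fortiori it is not a $B$-convex near-Hilbert space with MEP. Hence, if $X$ were an UFO, Theorem \ref{dicho} would force $X$ to be an $\mathcal L_\infty$-space, say an $\mathcal L_{\infty,\lambda}$-space.

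The heart of the argument is then to contradict the asymptotic $\ell_1$ structure using the local $\ell_\infty$ structure of $\mathcal L_\infty$-spaces. The key point I would establish is a localization fact: every finite-codimensional subspace of an $\mathcal L_{\infty,\lambda}$-space contains $\ell_\infty^m$, for every $m$, with a distortion depending only on $\lambda$ and not on the codimension. Granting this, apply it to the finite-codimensional subspace $X_n$: it contains a subspace $G$ of dimension $n$ with $\dist(G,\ell_\infty^n)\leq \lambda'$ for a uniform $\lambda'$. But $G$ is an $n$-dimensional subspace of $X_n$, so also $\dist(G,\ell_1^n)\leq C$. Composing the two isomorphisms yields $\dist(\ell_1^n,\ell_\infty^n)\leq \lambda' C$, which is impossible for large $n$ since this Banach--Mazur distance equals $n$. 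This contradiction shows that $X$ is not an UFO.

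The main obstacle is the localization fact, which I would prove by a perturbation argument. Pick a subspace $F\subset X$ that is $\lambda$-isomorphic to $\ell_\infty^N$ with $N$ huge, and write $X_n=\bigcap_{i=1}^{d}\ker\phi_i$ as a subspace of codimension $d$. The restrictions $\phi_i|_F$ lie in $(\ell_\infty^N)^*=\ell_1^N$, so for $N$ large one can find a set $S$ of $n$ coordinates on which all the restricted functionals have arbitrarily small total $\ell_1$-mass; the associated $\ell_\infty^n$-block, after a small correction supported off $S$ to land exactly in $X_n$, sits inside $X_n$ and is $2\lambda$-isomorphic to $\ell_\infty^n$. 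The only thing to monitor is that the perturbation constant stays bounded independently of the codimension $d$, which is guaranteed because $N$ may be taken as large as we wish inside an $\mathcal L_\infty$-space. (Alternatively, when $X$ has a basis one may bypass the localization: passing to the dual turns asymptotic $\ell_1$ into asymptotic $c_0$, so $X^*$ contains $\ell_\infty^n$ uniformly and has no finite cotype, contradicting that the dual of an $\mathcal L_\infty$-space is an $\mathcal L_1$-space and hence of cotype $2$.)
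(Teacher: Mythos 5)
Your overall route is the same as the paper's: the paper's proof is a one-liner observing that asymptotically $\ell_1$ spaces contain $\ell_1^n$ uniformly, that an UFO containing $\ell_1^n$ uniformly must fall on the $\mathcal L_\infty$ side of the dichotomy (the $B$-convex alternative is excluded precisely because $B$-convexity means no uniform copies of $\ell_1^n$), and that $\mathcal L_\infty$-spaces cannot be asymptotically $\ell_1$. The paper asserts this last step without justification, and your localization fact is exactly the right missing ingredient; so your proposal is the paper's argument with the implicit step made explicit.

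Two corrections to your write-up of that step. First, your uniformity claim for the perturbation --- ``guaranteed because $N$ may be taken as large as we wish'' --- is not a justification: correcting each block vector $u_j$ to land in $X_n$ means solving $d$ linear equations $\phi_i(w_j)=-\phi_i(u_j)$ with $\|w_j\|$ small, and the relevant open-mapping constant need not stay bounded as $d$ grows, no matter how large $N$ is. Fortunately, no perturbation is needed at all: identify $F$ with $\ell_\infty^N$ via the $\lambda$-isomorphism and split $[N]$ into $n$ disjoint blocks of size $d+1$ (so $N\geq n(d+1)$ suffices, which an infinite-dimensional $\mathcal L_{\infty,\lambda}$-space always permits). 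On each block, the common kernel of the $d$ restricted functionals contains a nonzero vector; normalizing in sup norm yields $n$ disjointly supported vectors spanning an isometric $\ell_\infty^n$ that lies \emph{exactly} in $X_n$ after transport, giving $\dist(G,\ell_\infty^n)\leq\lambda$ with no dependence on the codimension $d$. Second, the Banach--Mazur distance $\dist(\ell_1^n,\ell_\infty^n)$ is not $n$ but of order $\sqrt{n}$ (for $n=2$ the two spaces are even isometric); this is harmless, since $\dist(\ell_1^n,\ell_\infty^n)\to\infty$ is all the contradiction requires. Your parenthetical dual-space alternative is the shakiest part --- the duality between asymptotic $\ell_1$ and asymptotic $c_0$ structure needs a shrinking basis, and local duality a priori produces $\ell_\infty^n$'s as uniform \emph{quotients} of $X^*$ rather than subspaces (every separable space is a quotient of $\ell_1$, so quotient structure alone yields no cotype contradiction) --- but since it is offered only as an aside, it does not affect the validity of the main argument.
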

\begin{proof} Asymptotically $\ell_1$ spaces contain $\ell_1^n$
uniformly and UFO spaces containing $\ell_1^n$ are $\mathcal
L_\infty$-spaces; which cannot be asymptotically
$\ell_1$.\end{proof}

Passing to $\mathcal L_\infty$-spaces, we show that the
Argyros-Haydon $\mathcal{AH}$ space \cite{argyhay} is not
automorphic.

\begin{proposition}
The space $\mathcal {AH}$ is not extensible; hence it cannot be
automorphic.
\end{proposition}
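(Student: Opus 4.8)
\medskip
\noindent\emph{Proof plan.} The plan is to reduce the statement to the non-extensibility of $\mathcal{AH}$ and then to obstruct extensibility using the ``scalar-plus-compact'' description of its operators. Since $\mathcal{AH}$ is HI, the earlier Lemma stating that extensible and automorphic coincide for HI spaces reduces both conclusions to a single one: that $\mathcal{AH}$ is \emph{not} extensible. This is also the stronger assertion, so I would prove non-extensibility directly and read off non-automorphy as a corollary.

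First I would record the two structural facts about $\mathcal{AH}$ from \cite{argyhay}: it is an HI $\mathcal{L}_\infty$-space, and every operator $T\colon \mathcal{AH}\to \mathcal{AH}$ has the form $T=\lambda I+K$ with $K$ compact. The point is to extract from this a description of which operators out of a subspace could possibly be extended. Suppose, toward a contradiction, that $\mathcal{AH}$ were extensible, let $Y\subset \mathcal{AH}$ be a subspace and let $\tau\colon Y\to\mathcal{AH}$ be any operator. By extensibility $\tau$ is the restriction of some $T=\mu I+K\in\mathcal{L}(\mathcal{AH})$; restricting to $Y$ gives $\tau=\mu\, i_Y+K|_Y$, where $i_Y\colon Y\hookrightarrow\mathcal{AH}$ is the inclusion and $K|_Y$ is compact. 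Thus extensibility would force \emph{every} operator from \emph{every} subspace of $\mathcal{AH}$ into $\mathcal{AH}$ to be a scalar multiple of the inclusion plus a compact operator --- precisely the hypothesis of the earlier Proposition guaranteeing automorphy, so at this stage the two conditions are matched up.

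To reach a contradiction I would exhibit a subspace $Y\subset\mathcal{AH}$ together with a \emph{strictly singular, non-compact} operator $S\colon Y\to\mathcal{AH}$. The routine part is checking that such an $S$ cannot be written as $\lambda i_Y+(\text{compact})$: if $S=\lambda i_Y+K$ with $K$ compact, then $\lambda i_Y=S-K$ would be strictly singular (a difference of a strictly singular operator and a compact one, and the strictly singular operators form a closed subspace); but $i_Y$ is an isometric embedding, hence bounded below on every infinite-dimensional subspace, so $\lambda i_Y$ is strictly singular only when $\lambda=0$, whence $S=K$ is compact, contradicting non-compactness. Together with the previous paragraph, producing a single such $S$ shows $\mathcal{AH}$ is not extensible, hence not automorphic.

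The main obstacle is therefore the construction of the strictly singular non-compact operator $S$, which I would extract from the Bourgain--Delbaen/HI machinery of \cite{argyhay}. Concretely, I would fix a rapidly increasing sequence $(x_k)$ of normalized block vectors, take $Y$ to be its closed linear span, and define $S$ by sending the $x_k$ to a suitably chosen normalized block sequence $(w_k)$: boundedness and strict singularity come from the standard RIS estimates available in $\mathcal{AH}$ (which force the lower bound of $S$ on any block subspace to be arbitrarily small), while non-compactness is arranged by keeping $\|Sx_k\|$ bounded away from $0$ along a basic sequence. The delicate step --- and the genuine technical heart of the argument --- is verifying that $S$ is strictly singular on \emph{every} infinite-dimensional subspace of $Y$, not merely on block subspaces; this is exactly where the HI estimates of $\mathcal{AH}$ are indispensable.
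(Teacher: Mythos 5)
Your reduction and your obstruction argument coincide exactly with the paper's proof: both rest on the pair of facts that every operator on $\mathcal{AH}$ is a compact perturbation of a scalar, while some subspace $Y\subset\mathcal{AH}$ carries an operator into $\mathcal{AH}$ that is \emph{not} of the form $\lambda i_Y+K$ with $K$ compact; any extension of such an operator would restrict back to the forbidden form, so $\mathcal{AH}$ is not extensible, hence (automorphic $\Rightarrow$ extensible, or your appeal to the HI lemma) not automorphic. Your verification that a strictly singular non-compact $S$ can never equal $\lambda i_Y+K$ (else $\lambda i_Y=S-K$ is strictly singular, forcing $\lambda=0$ and $S=K$ compact) is correct and is indeed the right certificate, since in GM-type HI spaces operators from subspaces are scalar-times-inclusion plus strictly singular, so a non-scalar-plus-compact operator must be of exactly this kind.

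The gap is in your final step, which is precisely where the two proofs diverge. The paper does not construct the bad operator at all: it invokes its existence as a known property of $\mathcal{AH}$ from \cite{argyhay}. You instead propose to build a strictly singular non-compact $S\colon Y\to\mathcal{AH}$ by sending one normalized block sequence $(x_k)$ to another $(w_k)$ and appealing to ``standard RIS estimates''. As written this is a program, not a proof: nothing in the sketch establishes that $x_k\mapsto w_k$ extends to a \emph{bounded} operator (killing maps that shuffle block sequences is the very purpose of the HI/coding estimates, so boundedness is the first serious obstacle, and the known constructions do not map a basis of $Y$ onto a block sequence but are built from carefully chosen averaging functionals), and you yourself flag the strict singularity verification as ``the genuine technical heart'' without supplying it. For calibration, the analogous existence statement for the Gowers--Maurey space is the main theorem of the Androulakis--Schlumprecht paper \cite{AS} cited in the bibliography, i.e.\ a substantial piece of work in its own right. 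So either carry that construction out in full, or do what the paper does and quote the fact from \cite{argyhay}; with that citation in place your argument closes and becomes essentially identical to the paper's.
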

\begin{proof}
Indeed, each operator in $\mathcal {AH}$ is a sum of scalar and
compact operators, but there is a subspace $Y\subset \mathcal
{AH}$ and an operator $\tau:Y\to Y$ which is not a sum of scalar
and compact operators. This operator cannot be extended onto the whole
space $\mathcal {AH}$.
\end{proof}

Argyros and Raikoftsalis \cite{arra} constructed another separable
$\mathcal L_\infty$ counterexample to the scalar-plus-compact
problem. However it contains $\ell_1$ and

\begin{lemma} No separable Banach space containing $\ell_1$ can be
extensible.
\end{lemma}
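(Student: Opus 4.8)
The plan is to reduce to the case of a separable $\mathcal{L}_\infty$-space and then exploit the projective (universal) role of $\ell_1$. First I would dispose of the case where $X$ is not UFO: by Lemma 1.1 (i) an extensible space is UFO, so it suffices to treat UFO $X$. If $X$ is UFO, the Dichotomy Principle (Theorem \ref{dicho}) forces $X$ to be either an $\mathcal{L}_\infty$-space or $B$-convex; but a space containing $\ell_1$ contains $\ell_1^n$ uniformly and hence is not $B$-convex, so $X$ must be an $\mathcal{L}_\infty$-space. In particular $X$ is not isomorphic to $\ell_1$, since $\ell_1$ is not an $\mathcal{L}_\infty$-space.

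Next I would produce a canonical non-split presentation of $X$. As $X$ is separable, there is a quotient map $q\colon\ell_1\to X$, giving $0\to N\to\ell_1\stackrel{q}{\to} X\to 0$ with $N=\ker q$. This sequence cannot split: a splitting would realise $X$ as a complemented infinite-dimensional subspace of $\ell_1$, which by Pe\l czy\'nski's theorem must be isomorphic to $\ell_1$, contradicting the previous paragraph. Thus $q$ admits no bounded right inverse $s\colon X\to\ell_1$.

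Finally I would derive a contradiction from extensibility. Fix an isomorphic copy $Y\cong\ell_1$ inside $X$, with isomorphism $\phi\colon\ell_1\to Y\subseteq X$, and consider the surjection $\tau=q\phi^{-1}\colon Y\to X$. Extensibility yields $T\colon X\to X$ with $T|_Y=\tau$. Combining this with the lifting (projectivity) property of $\ell_1$ applied to the inclusion $\iota_Y\colon Y\hookrightarrow X$ through $q$ — which produces $\tilde\iota\colon Y\to\ell_1$ with $q\tilde\iota=\iota_Y$ — the goal is to manufacture a bounded right inverse of $q$, contradicting non-splitting. The natural bookkeeping is the pushout characterisation of extension: $\tau$ extends to an operator $X\to X$ if and only if the pushout of $0\to Y\to X\to X/Y\to 0$ along $\tau$ splits, so extensibility amounts to the vanishing of this pushout class.

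The hard part will be exactly this last passage: converting ``$\tau$ extends as an operator \emph{into} $X$'' into ``$q$ splits.'' The obstruction is range control — an extension $T\colon X\to X$ of $\tau$ need not take values where a section would require, so one cannot simply post-compose with $\phi^{-1}$ (which is defined only on $Y$). I expect to close the gap by feeding $\tilde\iota$ and $T$ into the pushout diagram and invoking that $X$ is $\mathcal{L}_\infty$ (so that finite-rank pieces extend and the principle of local reflexivity applies, as in Lemma \ref{genprop}) to realise the splitting morphism; verifying that the resulting composite is genuinely a right inverse of $q$ is the crux of the argument.
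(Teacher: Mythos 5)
There is a genuine gap here, and you flag it yourself: the crux---converting the extension $T$ of $\tau=q\phi^{-1}$ together with the lift $\tilde\iota$ into a bounded right inverse of $q$---is never carried out, and the plan you sketch for closing it points in the wrong homological direction. Extensibility of $X$ is an injectivity-type property of $X$ as a \emph{target}: it says precisely that the pushout of $0\to Y\to X\to X/Y\to 0$ along any operator $\tau:Y\to X$ splits, i.e.\ it kills classes in $\Ext(X/Y,X)$. Splitting your presentation $0\to N\to\ell_1\stackrel{q}{\to}X\to 0$ is the vanishing of a class in $\Ext(X,N)$, a projectivity-type statement about $X$ as a \emph{quotient}, and you exhibit no passage from one group to the other; the pushout characterisation you invoke merely restates the extension you already have by hypothesis. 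The tools you propose for the missing step (finite-rank extensions on an $\mathcal L_\infty$-space plus local reflexivity, as in Lemma \ref{genprop}) yield only UFO-type conclusions, which this paper emphasizes are strictly weaker than extensibility ($C[0,1]$ and $\ell_\infty/c_0$ are UFO but not extensible), so they cannot manufacture a global section. A sanity check that your intermediate target is the wrong invariant: for $X=c_0$, which is separable and extensible, the sequence $0\to N\to\ell_1\to c_0\to 0$ does \emph{not} split ($c_0$ does not even embed in $\ell_1$, by the Schur property); what extensibility buys is Sobczyk-type \emph{extension} of operators into $X$, never a \emph{lifting} through a quotient onto $X$.

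The paper's proof avoids all of this. It observes that the proof of Theorem \ref{dicho} in \cite{castplic} actually shows that an extensible space containing $\ell_1$ must be separably injective: the copy $Y\cong\ell_1\subset X$ and the universality of $\ell_1$ let one transfer any extension problem $t:E\to X$, with $E$ a subspace of a separable $Z$, into a problem on a subspace of $Y$ via a quotient map $\ell_1\to Z$, solve it by extensibility of $X$, and factor the solution back through $Z$ because it vanishes on the kernel. Then Zippin's theorem \cite{zipp} identifies $c_0$ as the only separable separably injective space, and $c_0$ does not contain $\ell_1$---contradiction, with no splitting of $q$ ever needed. Your ingredients $\tilde\iota$ and $T$ are exactly the right ones, but they should be assembled into this extension (injectivity) argument rather than a lifting one; your opening reduction via the dichotomy is correct but superfluous for that route.
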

\begin{proof} Indeed, the proof of Theorem \ref{dicho} in \cite{castplic} actually shows that
an extensible space containing $\ell_1$ must be separably
injective, and Zippin's theorem \cite{zipp} yields that $c_0$ is
the only separable separably injective space.\end{proof}

Related constructions are those of a different HI $\mathcal
L_\infty$-space of Tarbard \cite{tarb} and that of Argyros,
Freeman, Haydon, Odell, Raikoftsalis, Schlumprecht and
Zisimopoulou, who show in \cite{afhorsz} that every uniformly
convex separable Banach space can be embedded into an $\mathcal
L_\infty$-space with the scalar-plus-compact property. We do not know whether these spaces can be automorphic.\\

On the other side of the dichotomy, essentially the only known
example of a uniformly convex (hence $B$-convex) HI space was
given by the second author in 1997 \cite{F1}. Other examples
include, of course, its subspaces, and also a variation of this
example, all whose subspaces fail the Gordon-Lewis Property, due
to the second author and P. Habala \cite{FH}. From now on the space
of \cite{F1} will be denoted by $\mathcal F$. The space $\mathcal
F$ is defined as the interpolation space in $\theta \in ]0,1[$ of
a family of spaces similar to Gowers-Maurey's space and of a
family of copies of $\ell_q$ for some $q \in ]1,+\infty[$. We
shall prove the following result concerning the type and cotype of
$\mathcal F$. Here $p \in ]1,q[$ is defined by the relation
$1/p=1-\theta+\theta/q$, and we recall that $p(\mathcal
F)=\sup\{t:{\mathcal F}\;{\rm is\,of\,type}\;t\}\;\;
\mathrm{and}\;\; q(\mathcal F)=\inf\{c:{\mathcal F}\;{\rm
is\,of\,cotype}\;c\}$.

\begin{proposition}\label{type} We have the following estimates
\begin{itemize}
\item[(1)] if $q \geq 2$ then
\begin{itemize}
\item [(a)] $\mathcal F$ has type $[1-(\theta/2)]^{-1}$ and $p(F)
\leq p$, \item[(b)]  $\mathcal F$ has cotype $q/\theta$ and
$q(\mathcal F)=q/\theta$ ,
\end{itemize}
\item[(2)] if $q \leq 2$ then
\begin{itemize}
\item[(a)] $\mathcal F$ has type $p$ and $p(\mathcal F)=p$, \item[(b)]
$\mathcal F$ has cotype $2/\theta$ and $q(\mathcal F) \geq q/\theta$.
\end{itemize}
\end{itemize}
In particular $\mathcal F$ is not near-Hilbert and therefore it is
not UFO.
\end{proposition}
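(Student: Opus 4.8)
The plan is to compute the type and cotype of $\mathcal F$ directly from its description as a complex interpolation space $[X_0,X_1]_\theta$, where $X_0$ is the Gowers--Maurey--like ingredient and $X_1$ is the $\ell_q$ ingredient, and then to feed the resulting values of $p(\mathcal F)$ and $q(\mathcal F)$ into the Dichotomy principle (Theorem \ref{dicho}). The two inputs I would record first are: that $X_0$ has \emph{trivial} Rademacher type and cotype (type exponent $1$, cotype exponent $\infty$) — equivalently, by the Maurey--Pisier theorem, $X_0$ contains $\ell_1^n$ and $\ell_\infty^n$ uniformly — a feature of the Gowers--Maurey--like family that I would read off its definition; and that $\ell_q$ has type $\min(q,2)$ and cotype $\max(q,2)$. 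It is precisely the pair $(p_0,q_0)=(1,\infty)$ together with $(p_1,q_1)=(\min(q,2),\max(q,2))$ that makes the interpolation formulas return the displayed exponents.

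For the positive half — that $\mathcal F$ \emph{possesses} the stated type and cotype — I would invoke the interpolation behaviour of type and cotype for the complex method, namely $1/p_\theta=(1-\theta)/p_0+\theta/p_1$ and $1/q_\theta=(1-\theta)/q_0+\theta/q_1$. Substituting the endpoint data gives, when $q\ge 2$, type $[(1-\theta)+\theta/2]^{-1}=[1-\theta/2]^{-1}$ and cotype $[\theta/q]^{-1}=q/\theta$; and when $q\le 2$, type $[(1-\theta)+\theta/q]^{-1}=p$ and cotype $[\theta/2]^{-1}=2/\theta$. Because $\mathcal F$ is built by interpolating \emph{families} of spaces rather than a single pair, I would carry this step out through the analytic-family versions of the type and cotype estimates developed in Section 5, checking that the relevant constants remain uniform along the defining analytic family.

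The harder half is the reverse estimates, $p(\mathcal F)\le p$ and $q(\mathcal F)=q/\theta$ (respectively $q(\mathcal F)\ge q/\theta$), which assert that $\mathcal F$ has \emph{no better} type or cotype than advertised; these are not consequences of the interpolation inequalities, whose content goes the other way. I would establish them by exhibiting, uniformly in $n$, finite-dimensional subspaces realizing the extremal behaviour: interpolating the $\ell_1^n$'s sitting in $X_0$ against the $\ell_q^n$'s of $X_1$ produces uniformly placed $\ell_p^n$'s, which force $p(\mathcal F)\le p$, while the parallel construction against the $\ell_\infty^n$'s of $X_0$ yields $\ell_{q/\theta}^n$'s witnessing cotype $q/\theta$, forcing $q(\mathcal F)\ge q/\theta$; this finite-dimensional bookkeeping is exactly what the technical computations of Section 5 are for, and I expect it — together with the control of the interpolation constants along the family — to be the main obstacle. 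Once the exponents are known the conclusion is immediate: if $q\ge 2$ then $q(\mathcal F)=q/\theta>q\ge 2$, and if $q\le 2$ then $p(\mathcal F)=p<q\le 2$ since $p\in\,]1,q[$; in either case the equality $p(\mathcal F)=q(\mathcal F)=2$ fails, so $\mathcal F$ is not near-Hilbert. As $\mathcal F$ is uniformly convex it is reflexive, hence not an $\mathcal L_\infty$-space, and Theorem \ref{dicho} then leaves no possibility for $\mathcal F$ to be an UFO.
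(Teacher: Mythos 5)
Your endpoint data, your type computation, and your final application of Theorem \ref{dicho} all match the paper in spirit: the paper's proof that $\mathcal F$ has type $m$ with $1/m=1-\theta+\theta/\min(q,2)$ is precisely the interpolation-of-type argument carried out by hand (the $X_t$-side is used only through the triangle inequality, i.e.\ as a type-$1$ endpoint, and the $\ell_q$-side through its type $\min(q,2)$), using the explicit interpolation formula for families from \cite{F1} rather than an abstract family theorem. But there are two genuine gaps. The first is your positive cotype step: there is no ``interpolation behaviour of cotype'' to invoke. Unlike type (where Calder\'on's identities $[\ell_{p_0}(X_0),\ell_{p_1}(X_1)]_\theta=\ell_{p_\theta}(X_\theta)$ and $[L_2(X_0),L_2(X_1)]_\theta=L_2(X_\theta)$ let one interpolate the operator $(x_j)\mapsto\sum_j\varepsilon_jx_j$), the cotype inequality goes from $\mathrm{Rad}(X)\subset L_2(X)$ into $\ell_q(X)$, and to interpolate it one must first project $L_2(X_i)$ onto $\mathrm{Rad}(X_i)$, i.e.\ one needs K-convexity of the endpoints. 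The Gowers--Maurey-like spaces $X_t$ satisfy lower $f$-estimates, hence have trivial type and are \emph{not} K-convex, so this route is unavailable; a cotype formula with $q_0=\infty$ is exactly the kind of statement that fails (or is at best a hard open question) in general. The paper gets the positive cotype assertions by an entirely different mechanism: the modulus-of-convexity estimate of \cite{F1} (Proposition 3 there) gives modulus of power type $q/\theta$ (resp.\ $2/\theta$), and the Figiel--Pisier theorem \cite[Thm.\ 1.e.16]{lindtzaf} converts this into cotype. Your proposal is missing this idea, and without it the cotype half of (1)(b) and (2)(b) is unproved.

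The second gap is in the mechanism you propose for the reverse estimates. Complex interpolation does not act on subspaces, so ``interpolating the $\ell_1^n$'s sitting in $X_0$ against the $\ell_q^n$'s of $X_1$'' is not an available operation: the $\ell_1^n$'s and $\ell_\infty^n$'s of a Gowers--Maurey-like space are not coordinate subspaces, and only vectors whose behaviour on \emph{both} lines of the strip is controlled can be fed into the interpolation formula. For $p(\mathcal F)\le p$ no $\ell_p^n$'s are needed anyway: the lower $\ell_p$-estimate with logarithmic factor (\cite{F1}, Proposition 1) applied to successive normalized vectors already kills any type $t>p$. For $q(\mathcal F)\ge q/\theta$ the paper must produce \emph{specific} constant-coefficient vectors (the Kutzarova--Lin vectors of Schlumprecht's space) which are simultaneously, and uniformly in $t\in S_0^\infty$, almost isometrically $c_0$-like in every $X_t$, while being exactly $\ell_q$-normalized on the right-hand line; this uniform spreading-model statement (Lemma \ref{lemma} and Proposition \ref{spreadingmodel}, extending Androulakis--Schlumprecht and Kutzarova--Lin from $GM$ to the whole family) is a nontrivial theorem, not a feature one ``reads off the definition'' of $X_0$ -- indeed trivial cotype of each $X_t$ separately would not suffice, since the witnessing vectors must be the same for almost all $t$ for the analytic-family argument (Proposition \ref{upper}) to run. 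You correctly flag Section 5 as the locus of the hard work, but the operation you describe there would not, as stated, produce it.
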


It may be interesting to observe that if we choose $q=2$ and
$\theta$ sufficiently close to $1$, then the above estimates imply
that for any $\epsilon>0$, $\mathcal F$ may be chosen to be of
type $2-\epsilon$ and cotype $2+\epsilon$. We leave  the technical
proof of Proposition \ref{type} for the last section of this
paper.

\section{Determination of type and cotype of ${\mathcal F}$}

 This section is devoted to the proof of  Proposition \ref{type}.  Recall that the space $\mathcal F$ is a complex space defined as the
interpolation space in $\theta$ of a family of spaces $X_t, t \in
\R$, on the left of the border of the strip ${\mathcal S}=\{z \in
\C: 0 \leq Re(z) \leq 1\}$, and of a family of copies of the space
$\ell_q$ on the right of the border of $\mathcal S$, based on  the
theory of interpolation of a family of complex norms on $\C^n$
developed  in \cite{5a,5b}. Here $1<q<+\infty$ and $0<\theta<1$.
So it is more adequate to say that in \cite{F1} is produced a
family of uniformly convex and hereditarily indecomposable
examples depending on the parameters $\theta$ and $q$. Each space
$X_t$  is quite similar to the HI Gowers-Maurey space $GM$
\cite{GM}, and this occurs in a uniform way associated to a coding
of analytic functions.  Since those spaces
 satisfy approximate lower $\ell_1$-estimates, it follows that $\mathcal
F$ satisfy approximate lower $\ell_p$-estimates, where $p$ is defined by the classical
interpolation formula $\ell_p \simeq (\ell_1,\ell_q)_{\theta}$,
that is $\frac{1}{p}=(1-\theta)+\theta/q$; this is the main tool
used in \cite{F1} to prove that  $\mathcal F$ is uniformly convex.

On the other hand combined results of  Androulakis - Schlumprecht
\cite{AS} and Kutza\-rova - Lin \cite{KL} imply that the space
$GM$ contains $\ell_\infty^n$'s uniformly. In what follows we
shall prove that this result extends to each space $X_t$, and
furthermore that this happens uniformly in $t$. We shall then
deduce by interpolation methods that the space $\mathcal F$
contains arbitrary long sequences satisfying upper $\ell_r$
estimates, where $\frac{1}{r}=\theta/q$. From this we shall deduce
that $\mathcal F$ cannot have cotype less than $r$, whereas by the
approximate lower $\ell_p$-estimates, it does not have type more
than $p$. Therefore $\mathcal F$ may not be near-Hilbert and
neither may it be UFO.\\

As it is also known from the arguments of \cite{KL} and \cite{AS}
that any subspace of $GM$ contains $\ell_\infty^n$'s uniformly, it
is probable that our proof would also apply to deduce that no
subspace of ${\mathcal F}$ is near-Hilbert and therefore UFO. The
same probably holds for the Ferenczi-Habala space which is
constructed by the same interpolation method as above, using a
variation of the HI space $GM$.\\

We shall call $(e_n)$ the standard vector basis of $c_{00}$, the
space of eventually null sequences of scalars. We use the standard
notation about successive vectors in $c_{00}$. In particular the
{\em support} of a  vector $x=\sum_i x_i e_i$ in $c_{00}$ is ${\rm
supp\ }x=\{i \in \N: x_i \neq 0\}$ and the {\em range} of $x$ is
the interval of integers ${\rm ran\ }x=[\min({\rm supp\
}x),\max({\rm supp\ }x)]$, or $\emptyset$ if $x=0$. Also, if $x=\sum_i x_i e_i \in c_{00}$ and $E=[m,n]$ is an interval of integers, then
$Ex$ denotes the vector $\sum_{i=m}^n x_i e_i$. We recall that
Schlumprecht's space  $S$ \cite{S} is defined by the implicit
equation on $c_{00}$:

$$\|x\|_S=\|x\|_{\infty} \vee \sup_{n \geq 2, E_1<\cdots<E_n}\frac{1}{f(n)}\sum_{k=1}^n \|E_k x\|_S,$$

where $f(x)=\log_2(x+1)$ and $E_1,\ldots,E_n$ are successive
intervals of integers. Therefore every finitely supported vector
in $S$ is normed either by the sup norm, or by a functional of the
form $\frac{1}{f(j)}\sum_{s=1}^j x_s^*: x_1^*<\cdots<x_j^*$ where
$j \geq 2$ and each $x_s^*$ belongs to the unit ball $B(S^*)$ of
$S^*$. For $l \geq 2$, we define the equivalent norm $\|\cdot\|_l$
on $S$ by
$$\|x\|_l=\sup_{E_1<\cdots<E_l}\frac{1}{f(l)}\sum_{j=1}^l \|E_j x\|_S.$$
This norm corresponds to the supremum of the actions of
functionals of the form $\frac{1}{l}\sum_{s=1}^j x_s^*:
x_1^*<\cdots<x_l^*$ where each $x_i^*$ belongs to $B(S^*)$. It
will be useful to observe that if $x$ is a single vector of the
unit vector basis of $S$, then $\|x\|_l = \frac{1}{f(l)}$.

\

In Gowers-Maurey's type constructions a third term associated to
the action of so-called "special functionals" is added in the
implicit equation. We proceed to see how this is done  in the case
of $\mathcal F$. For the rest of this paper, $q \in ]1,+\infty[$
and $\theta \in ]0,1[$ are fixed, and $p \in ]1,q[$ is given by
the formula
$$1/p=1-\theta+\theta/q.$$ As was already mentioned, the space $\mathcal F$ is defined as the
interpolation space of two vertical lines of spaces, a line of
spaces $X_t$, $t \in \R$, on the left side of the strip ${\mathcal
S}$  and a line of  copies of $\ell_q$ on the right side of it. In
\cite{5a,5b} the spaces $X_t$ need only be defined for $t$ in a
set of measure $1$, and for technical reasons in the construction
of $\mathcal F$ in \cite{F1} the $X_t$'s are of interest only for
almost every $t$ real. So in what follows, $t$ will always be
taken in some set $S_{0}^{\infty}$ of measure $1$ which is defined
in \cite{F1}.\\

Our interest here is on the spaces $X_t$ and their norm
$\|\cdot\|_t$. As written in \cite[p. 214]{F1} we have the
following implicit equation for $x \in c_{00}$:
$$\|x\|_t=\|x\|_{\infty} \vee \sup_{n \geq 2, E_1<\cdots<E_n}
\frac{1}{f(n)} \sum_{k=1}^n \|E_k x\|_t \vee \sup_{G\ special,
E}|EG(it)(x)|.$$ The first two terms are similar to Schlumprecht's
definition. Now {\em special analytic functions} $G$ on the strip
${\mathcal S}$ have a very specific form, and special functionals
in $X_t^*$, for $t \in S_0^{\infty}$,   are produced by taking the
value $G(it)$ in $it$ of  special analytic functions. A special
analytic function is of the following form:
$$G=\frac{1}{\sqrt{f(k)}^{1-z} k^{z-z/q}}(G_1+\ldots+G_k),$$
where each $G_j$ is of the form
$$G_j=\frac{1}{f(n_j)^{1-z} n_j^{z-z/q}}(G_{j,1}+\ldots+G_{j,n_j}),$$
with each $G_{j,m}$ an analytic  function on the strip such that
$G_{j,m}(it)$ belongs to the unit ball of $X_t^*$ for each $t$ and
$G_{j,m}(1+it)$ belongs to the unit ball of $(\ell_q)^*$ for each
$t$. In the above, all analytic functions are finitely supported, i.e. for any analytic function $G$, there exists an interval of integers $E$ such that ${\rm supp}\ G(z) \subset E$ for all $z$, and therefore it makes sense to talk about successive analytic functions; moreover in the formulas above it is assumed that $G_1<\cdots<G_k$, respectively $G_{j,1}<\cdots<G_{j,n_j}$. Furthermore it is required that $n_1=j_{2k}$ and
$n_j=\sigma(G_1,\ldots,G_{j-1})$ for $j \geq 2$, where
$j_1,j_2,\ldots$ is the increasing  enumeration of a sufficiently
lacunary subset $J$ of $\N$ and $\sigma$ is some injection of some
set of finite sequences of analytic functions into $J$.
 We refer to \cite{F1} for more details. This means that for $t \in S_{0}^{\infty}$, any special functional
$z^*$ in $X_t^*$, obtained by the formula $z^*=G(it)$ will have
the form
$$z^*=\lambda \frac{1}{\sqrt{f(k)}} (z_1^*+\ldots+z_k^*),$$
where $|\lambda|=1$ and each $z_j^* \in B(X_t^*)$ has the form
$$z_j^*=\frac{1}{f(n_j)}(z_{j,1}^*+\ldots+z_{j,n_j}^*),$$
with each $z_{j,m}^*$ in the unit ball of $X_t^*$, with
$n_1=j_{2k}$, and for $j \geq 2$,
$n_j=\sigma(G_1,\ldots,G_{j-1})$, where $G_1,\ldots,G_k$ is a
sequence of analytic functions on ${\mathcal S}$ such that
$G_j(it)=z_j^*$ for each $j$. Observe that if two initial segments
of two sequences $z_1^*,\ldots,z_i^*$ and
$z_1^{*\prime},\ldots,z_j^{*\prime}$ defining two special
functionals are different, then the associated initial segments of
sequences of analytic functions must be different as well, and
therefore by the injectivity of $\sigma$ the integers $n_{i+1}$
and $n_{j+1}^{\prime}$ associated to $z_{i+1}^*$ and
$z_{j+1}^{*\prime}$ will be different. So the part of classical
Gowers-Maurey procedure using coding may be applied in spaces
$X_t$. G. Androulakis and Th. Schlumprecht proved that the
spreading model of the unit vector basis of $GM$ is isometric to
the unit vector basis of $S$ \cite{AS}. Similarly, we shall prove
that the unit vector basis of $S$ is "uniformly" the spreading
model of the unit vector basis of any $X_t$. We shall base the
proof on a technical lemma, which is essentially Lemma 3.3 from
\cite{AS}, and that we state now but whose proof will be
postponed.\\

For an interval $I \subset \N$ we define
$$J(I)=\{\sigma(G_1,\ldots,G_n): n \in \N,\ G_1<\cdots<G_n,\ \min I \leq \max {\rm supp\ } G_n <\max I\}.$$

For $z^* \in c_{00}$, we define $$J(z^*)=J({\rm ran\ }z^*).$$

It will be useful to observe that whenever ${\rm ran\ }z^* \subset
{\rm ran\ }w^*$, then $J(z^*) \subset J(w^*)$.

\begin{lemma}\label{lemma}
Let $t \in S_{0}^{\infty}$. There exists a norming subset $B_t$ of
the unit ball of $X_t^*$ such that for any $z^* \in B_t$ there
exists $T_0(z^*) \in B(S^*)$, and a family $(T_j(z^*))_{j \in
J(z^*)} \subset B(S^*)$ such that
\begin{enumerate}
\item for $j \in \{0\} \cup J(z^*)$, ${\rm ran\ }(T_j(z^*))
\subset {\rm ran\ }(z^*)$, \item for $j \in J(z^*)$,
$$T_j(z^*) \in aco\{\frac{1}{f(j)}\sum_{s=1}^j x_s^*: x_1^*<\cdots<x_j^* \in B(S^*)\},$$
where "aco" denotes the absolute convex hull, \item
$$z^*=T_0(z^*)+\sum_{j \in J(z^*)}T_j(z^*).$$
\end{enumerate}
\end{lemma}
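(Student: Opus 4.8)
The plan is to realise $B_t$ as the natural norming set of $X_t$ and to prove the decomposition by induction on the way a functional is built. Concretely, I would let $B_t$ be the smallest subset of the unit ball of $X_t^*$ that contains the coordinate functionals $\pm e_i^*$ and is closed under interval restriction, under the Schlumprecht operation $w_1^*,\dots,w_n^*\mapsto \frac{1}{f(n)}\sum_{k=1}^n w_k^*$ (for successive $w_k^*$, $n\geq 2$), and under the special operation producing the functionals $z^*=\lambda f(k)^{-1/2}\sum_{j=1}^k z_j^*$ described above. Since these are exactly the three clauses of the implicit equation for $\|\cdot\|_t$, the set $B_t$ is norming. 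Writing $A_j=\mathrm{aco}\{\frac{1}{f(j)}\sum_{s=1}^j x_s^*:x_1^*<\cdots<x_j^*\in B(S^*)\}$, the goal is, for each $z^*\in B_t$, to produce $T_0(z^*)\in B(S^*)$ and $(T_j(z^*))_{j\in J(z^*)}$ with $T_j(z^*)\in A_j$ summing to $z^*$ as in (3), all ranges inside $\mathrm{ran}\,z^*$. Morally $T_0$ carries the ``free'' Schlumprecht skeleton of $z^*$ while the $T_j$ absorb the contributions of the special functionals, the weight $j$ recording the coding value $\sigma(\cdots)$ that produced them. Everything is uniform in $t$ because $f$, the lacunary set $J$ and the coding $\sigma$ do not depend on $t$; only the atoms $z^*_{j,m}=G_{j,m}(it)$ do, and they always land in $B(X_t^*)$ by the same recipe.

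The base case $z^*=\pm e_i^*$ is immediate: $J(z^*)=\emptyset$ and $T_0(z^*)=\pm e_i^*\in B(S^*)$. For the Schlumprecht step, write $z^*=\frac{1}{f(n)}\sum_k w_k^*$ and apply the inductive decompositions $w_k^*=T_0(w_k^*)+\sum_{j\in J(w_k^*)}T_j(w_k^*)$. The skeletons assemble into $T_0(z^*):=\frac{1}{f(n)}\sum_k T_0(w_k^*)$, which is a genuine weight-$n$ functional in $B(S^*)$ since the $T_0(w_k^*)$ are successive elements of $B(S^*)$ and $n\geq2$. For the correction terms I would set $T_j(z^*):=\frac{1}{f(n)}T_j(w_{k}^*)$ for the unique $k$ with $j\in J(w_{k}^*)$, and $T_j(z^*):=0$ otherwise; scaling by $\frac{1}{f(n)}\leq1$ keeps these in $A_j$. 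The point that makes this well defined is that the sets $J(w_k^*)=J(\mathrm{ran}\,w_k^*)$ are pairwise \emph{disjoint}: if $j=\sigma(G_1,\dots,G_m)$ lay in two of them, injectivity of $\sigma$ would force the single number $\max\supp G_m$ to sit simultaneously in two disjoint successive ranges, which is impossible. Hence $J(z^*)\supseteq\bigsqcup_k J(w_k^*)$ is a disjoint union and (3) follows by reindexing.

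For the special step I would recurse on each atom $z_{j,m}^*\in B_t$, collect the skeletons $T_0(z_{j,m}^*)$ into the weight-$n_j$ functionals $U_j=\frac{1}{f(n_j)}\sum_m T_0(z_{j,m}^*)\in B(S^*)$, and observe that the outer constant is harmless: since $f(k)^{-1/2}\leq1$, the term $\lambda f(k)^{-1/2}U_j$ lies in $A_{n_j}$, and for $j\geq2$ the weight $n_j=\sigma(G_1,\dots,G_{j-1})$ indeed belongs to $J(z^*)$, so I set $T_{n_j}(z^*)=\lambda f(k)^{-1/2}U_j$. The first atom's weight $n_1=j_{2k}$ is not a coding value, so its skeleton $\lambda f(k)^{-1/2}U_1$ (of $S^*$-norm $\leq f(k)^{-1/2}\leq1$) is placed into $T_0(z^*)$. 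The inner correction terms $T_i(z_{j,m}^*)$ are pushed forward with coefficient $\lambda f(k)^{-1/2}f(n_j)^{-1}$ of modulus $\leq1$ into $T_i(z^*)\in A_i$, where $i\in J(z_{j,m}^*)\subset J(z^*)$.

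The main obstacle is the coefficient bookkeeping: each $T_j(z^*)$ must receive a \emph{single} contribution, for otherwise two summands of modulus $\leq1$ could add up outside $A_j$ (an easy computation shows that a sum of two successive weight-$j$ functionals need not lie in $A_j$). Thus the heart of the proof is to show that, across all atoms and both levels, the weights never collide: the index sets attached to the disjoint atoms $z_{j,m}^*$ are pairwise disjoint exactly as in the Schlumprecht step, the main weights $n_2,\dots,n_k$ are distinct by injectivity of $\sigma$ on initial segments, and no inner weight $i$ can equal a main weight $n_{j'}$, since injectivity would then identify a sequence of fine analytic functions living inside one atom with the coarse outer sequence $G_1,\dots,G_{j'-1}$, which the lacunarity of $J$ forbids. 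This is precisely the combinatorial content of Lemma 3.3 of \cite{AS}, transported to the family $X_t$ uniformly in $t$. The remaining interval-restriction case is routine, since restricting $T_0$ and each $T_j$ by an interval $E$ preserves membership in $B(S^*)$ and in $A_j$ and only shrinks $J(z^*)$.
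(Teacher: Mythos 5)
Your overall architecture matches the paper's: an inductively built norming set, a base case at the coordinate functionals, the Schlumprecht operation handled by assembling the skeletons into a single weight-$l$ functional and pushing each correction term forward with coefficient $1/f(l)$, the special case handled by placing each group's skeleton at its coding weight and the first group's skeleton into $T_0$, with injectivity of $\sigma$ giving disjointness of the index sets. But there is a genuine gap exactly where you declare interval restriction ``routine.'' Restricting a decomposition $z^*=T_0(z^*)+\sum_{j\in J(z^*)}T_j(z^*)$ by an interval $E$ does shrink the admissible index set to $J(Ez^*)$, but it does not kill the corresponding terms: if $E$ annihilates the first $i_1-1$ groups of a special functional and cuts into group $i_1>1$, then $ET_{m_{i_1}}(z^*)\neq 0$ while $m_{i_1}=\sigma(G_1,\ldots,G_{i_1-1})\notin J(Ez^*)$, because $\max \supp G_{i_1-1}<\min E\le \min {\rm ran}\,(Ez^*)$. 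So the restricted sum has a nonzero term at a forbidden index and condition (3) fails for $Ez^*$. This orphaned term can be re-homed into $T_0(Ez^*)$ --- it is a single weight-$m_{i_1}$ functional times $f(k)^{-1/2}\le 1$, and the previous content of $T_0$ has been annihilated by $E$ --- but that re-homing is precisely the nontrivial point of the paper's proof (the choice $i_1=\min\{i: E\cap\supp z_i^*\neq\emptyset\}$ in the third case), and it forces the paper to build the interval restriction into every clause of the norming set ($D_n^1,D_n^2,D_n^3$ all have the form $E(\cdots)$) and to apply the inductive hypothesis to the already-restricted components $Ez_{i,j}^*$, instead of restricting a finished decomposition at the very end. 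Your argument that ``the first atom's weight $j_{2k}$ is not a coding value'' only covers the unrestricted situation $i_1=1$.

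A second, smaller omission: your norming set is not closed under absolutely convex combinations, whereas the paper includes these as the clause $D_n^1$. This matters because the components $G_{j,m}(it)$ of special analytic functions are arbitrary elements of the unit ball of $X_t^*$, not elements of your smaller set; without the convex-combination clause either your $B_t$ fails to be norming or the induction cannot be applied to the components of the special functionals. Handling $D_n^1$ also shows that your guiding principle that each $T_j(z^*)$ must receive a single contribution is not the right invariant: for $z^*=E\sum_i\alpha_iz_i^*$ with $\sum_i|\alpha_i|\le 1$ the sets $J(Ez_i^*)$ may overlap, several contributions land at the same index $j$, and the decomposition survives simply because $A_j$ is absolutely convex. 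Single contributions (equivalently, disjointness of the index sets) are needed only for the operations whose coefficients are not absolutely summable to at most $1$, namely the Schlumprecht and special cases.
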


The proof of Lemma \ref{lemma} is given at the end of the article.

\begin{proposition}\label{spreadingmodel} Let $\epsilon>0$ and $k \in \N$. Then there exists $N \in \N$ such that for any $t \in S_{0}^{\infty}$ and for any
$N \leq n_1 < \cdots < n_k$,  for any scalars $(\lambda_i)_i$,
$$\|\sum_{i=1}^k \lambda_i e_i\|_S
\leq \|\sum_{i=1}^k \lambda_i e_{n_i}\|_t \leq
(1+\epsilon)\|\sum_{i=1}^k \lambda_i e_i\|_S.$$
\end{proposition}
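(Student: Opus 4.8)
The plan is to establish the two inequalities separately, the left-hand one being essentially formal and the right-hand one using Lemma~\ref{lemma} in an essential way. Throughout I write $x=\sum_{i=1}^k\lambda_i e_{n_i}$.

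For the lower estimate I would first show that $\|y\|_t\ge\|y\|_S$ for every $y\in c_{00}$, uniformly in $t$. Both norms are the increasing pointwise limit of the iteration $\|\cdot\|^{(0)}=\|\cdot\|_\infty$ and $\|y\|^{(m+1)}=\|y\|_\infty\vee\sup_{n\ge 2,\,E_1<\cdots<E_n}\frac{1}{f(n)}\sum_{k}\|E_k y\|^{(m)}$, the equation defining $\|\cdot\|_t$ carrying in addition the nonnegative special term; a one-line induction on $m$ gives $\|\cdot\|_t^{(m)}\ge\|\cdot\|_S^{(m)}$, hence $\|\cdot\|_t\ge\|\cdot\|_S$ in the limit. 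Since the unit vector basis of $S$ is $1$-subsymmetric, $\|x\|_S=\|\sum_i\lambda_i e_i\|_S$, so $\|x\|_t\ge\|x\|_S=\|\sum_i\lambda_i e_i\|_S$; note this half needs no lower bound on the $n_i$.

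For the upper estimate I would use that $B_t$ is norming, so $\|x\|_t=\sup_{z^*\in B_t}|z^*(x)|$. As $x$ is supported in $E=[n_1,n_k]$ and such norming sets are stable under restriction to intervals, we may assume $\mathrm{ran}\,z^*\subset E$. Applying Lemma~\ref{lemma}, $z^*(x)=T_0(z^*)(x)+\sum_{j\in J(z^*)}T_j(z^*)(x)$. The term $T_0(z^*)\in B(S^*)$ contributes $|T_0(z^*)(x)|\le\|x\|_S=\|\sum_i\lambda_i e_i\|_S$. For $j\in J(z^*)$, membership of $T_j(z^*)$ in the absolute convex hull of the functionals $\frac{1}{f(j)}\sum_{s=1}^j x_s^*$ yields $|T_j(z^*)(x)|\le\|x\|_j\le\frac{1}{f(j)}\|x\|_1\le\frac{k}{f(j)}\|x\|_S$, where I use $\sum_s\|E_s x\|_S\le\|x\|_1$ and $\|x\|_1\le k\|x\|_\infty\le k\|x\|_S$. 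Hence $|z^*(x)|\le\|x\|_S\bigl(1+k\sum_{j\in J(z^*)}\frac1{f(j)}\bigr)$.

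It then remains to make the sum uniformly small, and this is the main obstacle. Because $\mathrm{ran}\,z^*\subset[n_1,n_k]$, every $j=\sigma(G_1,\dots,G_n)\in J(z^*)$ has $\max\supp G_n\ge n_1\ge N$; since $\sigma$ takes values in the lacunary set $J$ and exceeds the supports of its arguments, $J(z^*)\subset J\cap(N,\infty)$. Choosing $J$ lacunary enough that $\sum_{j\in J}\frac1{f(j)}<\infty$, its tail tends to $0$, so one can fix $N=N(\epsilon,k)$, \emph{independent of $t$}, with $k\sum_{j\in J,\,j>N}\frac1{f(j)}<\epsilon$; then $|z^*(x)|\le(1+\epsilon)\|\sum_i\lambda_i e_i\|_S$ for all $z^*\in B_t$, and taking the supremum completes the proof. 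The real work is thus in reading off from \cite{F1} the two properties of the coding — that the indices attached to a functional living above $n_1$ are all large (so the tail starts at $N$) and that they are lacunary and $t$-independent (so the tail is summable and the whole estimate is uniform in $t\in S_0^\infty$) — together with the standard but needed stability of $B_t$ under interval restriction used in the reduction to $\mathrm{ran}\,z^*\subset E$.
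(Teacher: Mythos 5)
Your proof is correct and follows essentially the same route as the paper's: both reduce to the norming set $B_t$, apply the decomposition $z^*=T_0(z^*)+\sum_{j\in J(z^*)}T_j(z^*)$ of Lemma~\ref{lemma}, bound each $T_j$ contribution by $\|x\|_j\leq k\max_i|\lambda_i|/f(j)$, and use lacunarity of $J$ together with the coding to make the tail sum small uniformly in $t$; your explicit reduction to $\mathrm{ran}\,z^*\subset[n_1,n_k]$ via interval-restriction stability of $B_t$ is precisely the step the paper leaves implicit. The one divergence is that where you invoke a largeness property of the coding ($\sigma(G_1,\dots,G_n)>\max\supp G_n$, which you correctly flag as needing verification in \cite{F1}), the paper derives the same conclusion from injectivity of $\sigma$ alone --- only finitely many sequences can be mapped into $J\cap[1,M)$, so $N$ may be chosen beyond the finitely many supports involved --- an argument that is slightly more robust since it requires nothing of $\sigma$ beyond injectivity and its range lying in $J$.
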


\begin{proof} It is based on the similar lemma of \cite{AS} relating $GM$ to $S$.
The left-hand side inequality is always true by the respective
definitions of $X_t$ and $S$, so we concentrate on the right-hand
side. Fix $\epsilon>0$. We may assume that $\max_i |\lambda_i|=1$.
Since $J$ is lacunary enough we can find $M$ sufficiently large,
such that
$$k  \sum_{l \in J, l \geq M}\frac{1}{f(l)} <\epsilon.$$
Since $\sigma$ is injective, there exists an $N$ such that the
condition $N \leq \max{\rm supp}(G_n)$ guarantees that
$\sigma(G_1,\ldots,G_n)$ is at least $M$. In other words, whenever
$l \in J([N,+\infty))$, then $l \geq M$, and therefore
$$k \max_i |\lambda_i| \sum_{I \in J([N,+\infty))}\frac{1}{f(l)}<\epsilon.\  (1)$$
Thus if $N \leq n_1 < \ldots <n_k$, and $t$ is arbitrary in
$S_0^{\infty}$, then by Lemma \ref{lemma},

$$\|\sum_{i=1}^k \lambda_i e_{n_i}\|_t \leq \|\sum_{i=1}^k \lambda_i e_i\|_S+\sum_{l \in J([N,\infty))}\|\sum_{i=1}^k \lambda_i e_{n_i}\|_l$$
$$\leq \|\sum_{i=1}^k \lambda_i e_i\|_S+k\max_i|\lambda_i| \sum_{I \in J([N,+\infty))}\frac{1}{f(l)} \leq \|\sum_{i=1}^k \lambda_i e_i\|_S+\epsilon.$$ Together with the fact that the basis of $S$ is bimonotone, this concludes the proof.
 \end{proof}

\

\begin{defi}
A vector in $c_{00}$ of the form $$\frac{f(m)}{m}\sum_{i \in
K}e_i,$$ where $m=|K|$, will be said to be an  $S$-normalized
constant coefficients vector.
\end{defi}

 It is observed in \cite{KL} that such a vector has norm $1$ in Schlumprecht's space. In each $X_t$ any sequence of successive vectors satisfies the inequality
$$\frac{1}{f(n)}\sum_{i=1}^n \|x_i\|_t \leq \|\sum_{i=1}^n x_i\|_t,$$
from which we deduce immediately that any $S$-normalized constant
coefficients vector must have norm at least $1$ in each $X_t$.

\begin{proposition}\label{KL} Let $n \in \N$, $\epsilon>0$. There exists a sequence of vectors $u_1,\ldots,u_n$ in $c_{00}$ such that
\begin{itemize}
\item[(a)] the support of the $u_j$'s are pairwise disjoint,
\item[(b)] each $u_j$ is an $S$-normalized constant coefficients
vector, \item[(c)]  for any $t \in S_0^\infty$, for each $j$, $1
\leq \|u_j\|_t \leq  1+\epsilon$, \item[(d)] for any $t \in
S_0^\infty$, any $a_1,\ldots,a_n$ of modulus $1$, $\|a_1
u_1+\ldots+a_n u_n\|_t \leq 1+\epsilon$.
\end{itemize}
\end{proposition}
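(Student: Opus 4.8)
The plan is to take each $u_j$ to be an $S$-normalized constant coefficient vector $\frac{f(m_j)}{m_j}\sum_{i\in K_j}e_i$ whose sizes $m_j$ increase rapidly and whose pairwise disjoint supports $K_j$ all lie in a tail $[N,+\infty)$, fixing the threshold $N$ only after the sizes are chosen. Conditions (a) and (b) then hold by construction, and the lower bound in (c) is immediate: the displayed inequality $\frac{1}{f(n)}\sum_i\|x_i\|_t\le\|\sum_i x_i\|_t$ forces $\|u_j\|_t\ge 1$ for every $t\in S_0^{\infty}$. So the real content is the uniform upper bound $\|\sum_j a_j u_j\|_t\le 1+\varepsilon$, which also yields the upper half of (c) by taking a single nonzero coefficient.

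For the upper bound I would mimic the proof of Proposition \ref{spreadingmodel}. Fix $t$ and any $z^*\in B_t$, and set $x=\sum_j a_j u_j$. Lemma \ref{lemma} splits $z^*=T_0(z^*)+\sum_{l\in J(z^*)}T_l(z^*)$ with $T_0(z^*)\in B(S^*)$ and each $T_l(z^*)$ in the absolute convex hull of the functionals defining $\|\cdot\|_l$, so that
$$|z^*(x)|\le\|x\|_S+\sum_{l\in J(z^*)}\|x\|_l\le\|x\|_S+\sum_{l\in J([N,+\infty))}\|x\|_l,$$
the last passage being justified, as in Proposition \ref{spreadingmodel}, by the supports lying in $[N,+\infty)$. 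The point is that this special-functional tail can be made uniformly (in $t$, since $J$ does not depend on $t$) negligible: using $\|\cdot\|_S\le\|\cdot\|_1$ and the disjointness of the $K_j$ one checks
$$\sum_{k=1}^l\|E_k x\|_S\le\sum_j\frac{f(m_j)}{m_j}\sum_k|E_k\cap K_j|\le\sum_j f(m_j)=:B,$$
so $\|x\|_l\le B/f(l)$ for every $l$. Since $J$ is lacunary, $\sum_{l\in J([N,+\infty))}1/f(l)\to 0$ as $N\to\infty$; choosing $N$ large, after the $m_j$ (hence $B$) are fixed, makes $\sum_{l\in J([N,+\infty))}\|x\|_l\le\varepsilon/2$.

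It then remains to bound the main term $\|x\|_S$, and this is where the Kutzarova--Lin analysis \cite{KL} (as combined with \cite{AS} for $GM$) must enter: the sizes $m_j$ should be taken to grow quickly enough that the combination $\sum_j a_j u_j$ of $S$-normalized constant coefficient vectors has $S$-norm at most $1+\varepsilon/2$ for all coefficients of modulus one. Granting this, $|z^*(x)|\le 1+\varepsilon$, and taking the supremum over the norming set $B_t$ gives (d) uniformly in $t$. I expect this $S$-estimate to be the main obstacle: the reduction to the $S$-norm and the control of the special-functional tail are formal consequences of Lemma \ref{lemma} and the lacunarity of $J$, entirely parallel to Proposition \ref{spreadingmodel}, whereas the behaviour of constant coefficient vectors in Schlumprecht's space is the genuinely combinatorial input and the only place where the rapid growth of the $m_j$ is actually needed.
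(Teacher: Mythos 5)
Your overall route is the same as the paper's: everything reduces to the Kutzarova--Lin statement about constant coefficient vectors in $S$ together with a uniform transfer from $\|\cdot\|_S$ to the norms $\|\cdot\|_t$. Where the paper simply invokes Proposition \ref{spreadingmodel} (plus $1$-unconditionality and $1$-subsymmetry of the basis of $S$), you inline the proof of that proposition via Lemma \ref{lemma}; that part of your argument is correct, and your tail estimate $\|x\|_l\leq B/f(l)$ with $B=\sum_j f(m_j)$, with $N$ chosen only after $B$ is fixed, is a legitimate (in fact slightly sharper) version of the estimate $(1)$ used in the proof of Proposition \ref{spreadingmodel}.

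The genuine gap is in your formulation of the combinatorial input. You assert that it suffices to take ``the sizes $m_j$ to grow quickly enough'' for $\|\sum_j a_j u_j\|_S\leq 1+\varepsilon/2$ to hold, the supports then being placed disjointly anywhere in the tail $[N,+\infty)$. No growth condition on the sizes alone can achieve this: if the supports $K_1<\cdots<K_n$ are placed successively, the lower estimate in $S$ gives $\|u_1+\cdots+u_n\|_S\geq \frac{1}{f(n)}\sum_j\|u_j\|_S=n/f(n)$, which is unbounded in $n$ whatever the $m_j$ are. What \cite{KL} actually produces is a specific \emph{interleaved} configuration of supports, and the interleaving pattern is as essential as the rapid growth of the sizes. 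Moreover, the step your plan needs --- fix the configuration first, then push it beyond a threshold $N$ chosen afterwards without changing any $S$-norms --- is exactly what the $1$-subsymmetry of the basis of $S$ provides (one spreads the whole configuration in an order-preserving way); this is precisely how the paper phrases its proof, and your write-up never invokes it. Finally, a small slip: the upper bound in (c) does not follow from (d) ``by taking a single nonzero coefficient,'' since (d) constrains all $a_j$ to have modulus one; instead write $u_j=\frac{1}{2}\big((u_j+\sum_{k\neq j}u_k)+(u_j-\sum_{k\neq j}u_k)\big)$ and apply (d) twice, or prove (c) directly from Proposition \ref{spreadingmodel} as the paper does.
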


\begin{proof} D. Kutzarova and P. K. Lin \cite{KL} have proved that for any $n \geq 1$ there exists a sequence of
$S$-normalized constant coefficient vectors  $u_1,\ldots,u_n$ in
$c_{00}$ which are disjointly supported and such that
$\|u_1+\ldots+u_n\|_S \leq 1+\epsilon$.  Recall that the basis of
$S$ is $1$-unconditional, so as well $\|a_1 u_1+\ldots+a_n u_n\|_S
\leq 1+\epsilon$ whenever the $a_j$'s have modulus $1$. Since the
basis of $S$ is $1$-subsymmetric, by Proposition
\ref{spreadingmodel} we may assume that $u_1,\ldots,u_n$ were
taken far enough on the basis to guarantee that $\|u_j\|_t \leq
(1+\epsilon)^2$ and  $\|a_1 u_1+\ldots+a_n u_n\|_t \leq
(1+\epsilon)^2$ for any $t \in S_0^{\infty}$ and any choice of
$a_1,\ldots,a_n$. Since $\epsilon$ was arbitrary this proves the
result.
\end{proof}

\begin{defi}
A vector in $c_{00}$  of the form
$$\frac{f(m)^{1-\theta}}{m^{1/p}}\sum_{i \in K}e_i,$$ where
$m=|K|$,  will be said to be an $S_{\theta}$-normalized constant
coefficients vector.
\end{defi}

 Such a vector would have norm $1$ in the $\theta$-interpolation space of $S$ and $\ell_q$; see \cite{K} where such spaces are studied. For our purposes we shall only use the following fact:

\begin{fact} Any $S_{\theta}$-normalized vector $x \in c_{00}$ satisfies $\|x\|_F \geq 1$.
\end{fact}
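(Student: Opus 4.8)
The plan is to prove the lower bound $\|x\|_F\ge 1$ for the given $S_\theta$-normalized vector $x=\frac{f(m)^{1-\theta}}{m^{1/p}}\sum_{i\in K}e_i$ (with $m=|K|$) by exhibiting a single \emph{dual} analytic family on the strip $\mathcal S$ whose value at $\theta$ pairs with $x$ to give $1$ while having boundary values of norm at most $1$; a three-lines (Hadamard) argument then forces every admissible primal interpolator of $x$ to have boundary norm at least $1$, and the infimum defining $\|x\|_F$ inherits the bound. Writing $q'$ for the conjugate exponent ($1/q+1/q'=1$), I would set
\[
G(z)=f(m)^{\,z-1}\,m^{-z/q'}\sum_{i\in K}e_i^*,
\]
an entire, finitely supported coordinate-functional-valued function, and analyze its two boundary edges.

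First I would verify the boundary estimates. On the right edge $z=1+it$ the scalar prefactor has modulus $m^{-1/q'}$, so $\|G(1+it)\|_{\ell_{q'}}=m^{-1/q'}\big(\sum_{i\in K}1\big)^{1/q'}=1$. On the left edge $z=it$ the prefactor has modulus $f(m)^{-1}$, so $G(it)=\lambda_t\,\frac{1}{f(m)}\sum_{i\in K}e_i^*$ for some unimodular $\lambda_t$, and it suffices to see that $\frac{1}{f(m)}\sum_{i\in K}e_i^*\in B(X_t^*)$ for each $t\in S_0^\infty$. This is exactly what the second term in the implicit equation for $\|\cdot\|_t$ delivers: for $\|y\|_t\le 1$, ordering $K$ increasingly and taking the singleton intervals $E_k=\{i_k\}$ gives $\frac{1}{f(m)}\sum_{i\in K}|y_i|=\frac{1}{f(m)}\sum_{k}\|E_k y\|_t\le\|y\|_t\le 1$ (using $\|e_i\|_t=1$), whence $\big|\langle\tfrac{1}{f(m)}\sum_{i\in K}e_i^*,\,y\rangle\big|\le 1$. (For $m=1$ one has $f(1)=1$, so the functional is just $e_i^*$, which lies in $B(X_t^*)$ since $|y_i|\le\|y\|_\infty\le\|y\|_t$.) Thus $\|G(z)\|\le 1$ on both edges of $\mathcal S$.

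Next, the exponent bookkeeping gives the crucial normalization: since $1-1/p=\theta-\theta/q=\theta/q'$, a direct computation yields
\[
\langle G(\theta),x\rangle=f(m)^{0}\,m^{\,1-\theta/q'-1/p}=m^{0}=1.
\]
To conclude, let $\Phi$ be any admissible analytic function on $\mathcal S$ with $\Phi(\theta)=x$, boundary values in $X_t$ (left) and $\ell_q$ (right), and put $M(\Phi)=\max\big(\sup_t\|\Phi(it)\|_t,\sup_t\|\Phi(1+it)\|_{\ell_q}\big)$, so that $\|x\|_F=\inf\{M(\Phi):\Phi(\theta)=x\}$. The scalar function $h(z)=\langle G(z),\Phi(z)\rangle=f(m)^{z-1}m^{-z/q'}\sum_{i\in K}\Phi(z)_i$ is bounded and analytic on the closed strip (the prefactor is bounded for $\operatorname{Re}z\in[0,1]$, $K$ is finite, and the coordinates $\Phi(z)_i=\langle e_i^*,\Phi(z)\rangle$ are bounded analytic by admissibility of $\Phi$), and on the edges $|h(it)|\le\|\Phi(it)\|_t$ and $|h(1+it)|\le\|\Phi(1+it)\|_{\ell_q}$ by the boundary ball estimates together with norm duality and H\"older. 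The three-lines theorem then gives
\[
1=|h(\theta)|\le\Big(\sup_t\|\Phi(it)\|_t\Big)^{1-\theta}\Big(\sup_t\|\Phi(1+it)\|_{\ell_q}\Big)^{\theta}\le M(\Phi),
\]
and taking the infimum over all such $\Phi$ yields $\|x\|_F\ge 1$.

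The step I expect to be the main obstacle is the clean justification of this pairing inside the Coifman--Cwikel--Rochberg--Sagher--Weiss framework of \cite{5a,5b}: one must confirm that $h(z)=\langle G(z),\Phi(z)\rangle$ genuinely belongs to the class for which the three-lines theorem applies (bounded, continuous up to the boundary, analytic inside) and that the coordinatewise pairing is the correct duality between the primal interpolation scale $(X_t,\ell_q)$ and the dual scale $(X_t^*,\ell_{q'})$. By contrast, the membership $\frac{1}{f(m)}\sum_{i\in K}e_i^*\in B(X_t^*)$, read directly off the implicit norm equation of $X_t$, and the exponent arithmetic leading to $\langle G(\theta),x\rangle=1$ are routine once the interpolation framework is fixed.
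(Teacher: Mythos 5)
Your proof is correct in substance, but it takes a genuinely different route from the paper's. The paper disposes of the Fact in one line: it quotes Proposition 1 of \cite{F1}, the lower $\ell_p$-estimate $\frac{1}{f(m)^{1-\theta}}\bigl(\sum_{k=1}^m\|x_k\|_F^p\bigr)^{1/p}\le\bigl\|\sum_{k=1}^m x_k\bigr\|_F$ for successive vectors, applied to the $m$ scaled coordinate vectors of the constant-coefficient vector $x$; the normalization $\frac{f(m)^{1-\theta}}{m^{1/p}}$ is rigged exactly so that this left-hand side equals $1$. You instead build an explicit dual analytic family $G(z)=f(m)^{z-1}m^{-z/q'}\sum_{i\in K}e_i^*$ --- note this is precisely the shape $\frac{1}{f(n)^{1-z}n^{z-z/q}}(\cdots)$ of the building blocks of the special functionals in the paper, since $z-z/q=z/q'$ --- check that its boundary values lie in the respective dual balls (your singleton-interval argument via the second term of the implicit equation for $\|\cdot\|_t$ is sound, and in fact only needs $\|\cdot\|_t\ge\|\cdot\|_\infty$ rather than $\|e_i\|_t=1$), and run a three-lines argument against an arbitrary admissible interpolator of $x$. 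In effect you reprove, from first principles inside the Coifman--Cwikel--Rochberg--Sagher--Weiss framework, exactly the special case of \cite{F1} Proposition 1 that is needed: this buys self-containedness and exhibits the norming functional explicitly, at the price of the technical verifications you flag (analyticity and boundedness of the scalar pairing $h$ for the admissible class). One caveat you did not flag: your final step identifies $\|x\|_F$ with the sup-based quantity $\inf M(\Phi)$, whereas the paper's formula (2) (Lemma 1 of \cite{F1}) expresses $\|x\|_F$ as an infimum of products of \emph{integral means} against $\mu_0,\mu_1$, and a lower bound against products of sups does not formally imply one against products of integrals. This is repairable rather than fatal: either invoke the standard fact that the two infima coincide (by outer-function renormalization), or strengthen the three-lines step to the harmonic-measure form $\log|h(\theta)|\le(1-\theta)\int_{\R}\log|h(it)|\,d\mu_0(t)+\theta\int_{\R}\log|h(1+it)|\,d\mu_1(t)$ followed by Jensen's inequality, which bounds $1=|h(\theta)|$ directly by the product of integrals appearing in (2); but one of these should be said explicitly.
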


\begin{proof} By \cite{F1} Proposition 1, for any successive vectors $x_1<\cdots<x_m$  in $c_{00}$,
$$\frac{1}{f(m)^{1-\theta}}\big(\sum_{k=1}^m \|x_k\|_F^p\big)^{1/p}
\leq \|\sum_{k=1}^m x_k\|_F \leq \big(\sum_{k=1}^m
\|x_k\|_F^p\big)^{1/p}. $$ In the case of an
$S_{\theta}$-normalized constant coefficients vector the left-hand
side gives the result.
\end{proof}

\begin{proposition}\label{upper} Let $n \in \N$, $\epsilon>0$. There exists a sequence of vectors $v_1,\ldots,v_n$ in $c_{00}$ such that
\begin{itemize}
\item[(a)] the supports of the $v_j$'s are pairwise disjoint,
\item[(b)] each $v_j$ is an $S_{\theta}$-normalized constant
coefficients vector, \item[(c)] for each $j$, $1 \leq \|v_j\|_F
\leq 1+\epsilon$, \item[(d)] for any $a_1,\ldots,a_n$ of modulus
$1$, $\|a_1 v_1+\ldots + a_n v_n\|_F \leq (1+\epsilon)
n^{\theta/q}$.
\end{itemize}
\end{proposition}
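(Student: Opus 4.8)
The plan is to place the $v_j$ on the very supports produced by Proposition \ref{KL} and to transport the good behaviour of those vectors from the left edge of the strip to $\mathcal F$ by interpolation. First I would apply Proposition \ref{KL} with the given $n$ and $\epsilon$ to obtain pairwise disjointly supported $S$-normalized constant coefficients vectors $u_1,\dots,u_n$, say $u_j=\frac{f(m_j)}{m_j}\sum_{i\in K_j}e_i$ with $m_j=|K_j|$, taken far enough on the basis that $\|u_j\|_t\le 1+\epsilon$ and $\|\sum_j b_j u_j\|_t\le 1+\epsilon$ hold uniformly in $t\in S_0^{\infty}$ whenever the $b_j$ have modulus one. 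I then set $v_j=\frac{f(m_j)^{1-\theta}}{m_j^{1/p}}\sum_{i\in K_j}e_i$, the $S_\theta$-normalized vector on the same set $K_j$. Clauses (a) and (b) are then immediate, and the lower bound $\|v_j\|_F\ge 1$ of (c) is exactly the Fact preceding this statement.

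For the two upper bounds I would return to the definition of $\mathcal F$ as an interpolation space, exploiting that an $S_\theta$-normalizer is precisely the $\theta$-geometric mean of the $S$-normalizer $f(m_j)/m_j$ governing the edge $\Re z=0$ (which carries the family $X_t$) and the $\ell_q$-normalizer $m_j^{-1/q}$ governing the edge $\Re z=1$ (which carries $\ell_q$). Concretely I would introduce the analytic functions
$$c_j(z)=\frac{f(m_j)^{1-z}}{m_j^{(1-z)+z/q}},\qquad h_j(z)=c_j(z)\sum_{i\in K_j}e_i,$$
and verify $h_j(\theta)=v_j$ using $1/p=(1-\theta)+\theta/q$. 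A short computation of moduli on the two edges gives $|c_j(it)|=f(m_j)/m_j$, so $h_j(it)$ is a unimodular multiple of $u_j$, and $|c_j(1+it)|=m_j^{-1/q}$, so $h_j(1+it)$ is a unimodular multiple of the $\ell_q$-unit vector $w_j=m_j^{-1/q}\sum_{i\in K_j}e_i$. Since the $\mathcal F$-norm of $h_j(\theta)$ is bounded by the maximum of the suprema of the edge norms of any such interpolating function, clause (c) follows from $\|h_j(it)\|_t=\|u_j\|_t\le 1+\epsilon$ and $\|h_j(1+it)\|_{\ell_q}=\|w_j\|_{\ell_q}=1$.

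For (d) the straightforward route through the lower/upper $\ell_p$-inequalities of \cite{F1} is too weak: it would only produce the factor $n^{1/p}$, and since $1/p=(1-\theta)+\theta/q>\theta/q$ this is worse than wanted. The gain must come from the near-$\ell_\infty$ behaviour of $(u_j)$ on the edge $\Re z=0$ recorded in Proposition \ref{KL}(d). Accordingly I would interpolate the whole sum while inserting a balancing scalar factor, setting
$$\tilde H(z)=n^{-(z-\theta)/q}\sum_{j=1}^n a_j h_j(z),$$
which is analytic and satisfies $\tilde H(\theta)=\sum_j a_j v_j$. On $\Re z=0$ the factor has modulus $n^{\theta/q}$ and $\sum_j a_j h_j(it)=\sum_j a_j\phi_j(t)u_j$ with unimodular $\phi_j(t)$, so Proposition \ref{KL}(d) gives $\|\tilde H(it)\|_t\le (1+\epsilon)n^{\theta/q}$; on $\Re z=1$ the factor has modulus $n^{-(1-\theta)/q}$ and $\sum_j a_j h_j(1+it)$ is a unimodular combination of the disjointly supported $\ell_q$-unit vectors $w_j$, of $\ell_q$-norm $n^{1/q}$, whence $\|\tilde H(1+it)\|_{\ell_q}=n^{-(1-\theta)/q}n^{1/q}=n^{\theta/q}$. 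Taking the maximum of the two edge bounds yields $\|\sum_j a_j v_j\|_F\le (1+\epsilon)n^{\theta/q}$, which is (d). I expect the main obstacle to be making this interpolation bound rigorous in the family framework of \cite{5a,5b}---in particular justifying that the $\mathcal F$-norm of $h(\theta)$ is dominated by the supremum over $t$ of the edge norms---together with the bookkeeping that keeps every edge estimate uniform in $t$, which is exactly what the ``far enough on the basis'' clause of Proposition \ref{KL} provides.
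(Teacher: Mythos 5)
Your proposal is correct and follows the paper's construction exactly: the same vectors $v_j$ built on the supports of the Kutzarova--Lin vectors $u_j$ of Proposition \ref{KL}, the same analytic functions (your $h_j$ are the paper's $F_j(z)=\frac{f(m_j)^{1-z}}{m_j^{1-z+z/q}}\sum_{i\in M_j}e_i$), the same treatment of (a), (b) and the lower bound in (c) via the Fact, and the same two edge computations (Proposition \ref{KL}(d) on $\Re z=0$, disjointly supported $\ell_q$-unit vectors on $\Re z=1$). The one place you diverge is the final interpolation estimate. The paper invokes the exact norm formula of \cite{F1} (its formula (2)): $\|x\|_F=\inf_{G\in\mathcal A_\theta(x)}\big(\int_\R\|G(it)\|_t\,d\mu_0(t)\big)^{1-\theta}\big(\int_\R\|G(1+it)\|_q\,d\mu_1(t)\big)^{\theta}$, applied to the \emph{unweighted} function $F=\sum_j a_jF_j$; the geometric mean of the edge bounds $1+\epsilon$ and $n^{1/q}$ then yields $(1+\epsilon)^{1-\theta}n^{\theta/q}$ with no balancing needed. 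You instead use a sup-over-edges bound, which by itself would only give $\max(1+\epsilon,\,n^{1/q})=n^{1/q}$, and you repair this by inserting the analytic scalar $n^{-(z-\theta)/q}$ to equalize both edges at $n^{\theta/q}$. Both routes work, and the ``main obstacle'' you flag --- justifying domination of $\|h(\theta)\|_F$ by the edge suprema in the family framework of \cite{5a,5b} --- is settled precisely by formula (2), since $\mu_0,\mu_1$ are probability measures and $A^{1-\theta}B^{\theta}\le\max(A,B)$; so your bound is a (weaker) consequence of the product formula, which is exactly why the paper can dispense with the balancing factor. It is worth noting that your balancing device $\lambda^{z-\theta}$ is not wasted effort: it is the very trick the paper uses afterwards, in the proof of Proposition \ref{type}, where the functions $G_j(z)=\lambda_j^{z-\theta}F_j(z)$ are introduced to optimize over weights in the type estimate.
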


\begin{proof} Let $u_1,\ldots,u_n$ be given by Proposition \ref{KL}. Write each $u_j$ in the form
$$u_j=\frac{f(m_j)}{m_j}\sum_{i \in M_j}e_i,$$
where $m_j=|M_j|$. Consider for each $j$ the analytic function
$F_j$ defined on ${\mathcal S}$ by
$$F_j(z)=\frac{f(m_j)^{1-z}}{m_j^{1-z+z/q}} \sum_{i \in M_j}e_i.$$
Let $v_j=F_j(\theta)$ for each $j$. Observe that
$$v_j=\frac{f(m_j)^{1-\theta}}{m_j^{1/p}}\sum_{i \in M_j}e_i,$$
and therefore $v_j$ is an $S_{\theta}$-normalized constant
coefficients vector and has norm at least $1$. Also the $v_j$'s
are disjointly supported. Let $a_1,\ldots,a_n$ be an arbitrary
sequence of complex numbers of modulus $1$. Denote $y=a_1
v_1+\ldots+a_n v_n$ and $F=a_1 F_1+\ldots+a_n F_n$. Since
$\mathcal F$ is an analytic bounded function on $S$ satisfying
$F(\theta)=y$,  it belongs to the set ${\mathcal A}_{\theta}(y)$
of analytic functions defined at the beginning of \cite{F1} 1.2.
By Lemma 1 of \cite{F1} the following formula holds for $x \in
c_{00}$:
$$\|x\|_F=\inf_{G \in {\mathcal A}_{\theta}(x)}\Big(\int_R\|G(it)\|_t d\mu_0(t)\Big)^{1-\theta}
\Big(\int_R\|G(1+it)\|_q d\mu_1(t)\Big)^{\theta} \ \ \ \ \\ \ \ \
\ \ (2),$$ where $\mu_0$ and $\mu_1$ are some probability measures
on $\R$ whose definitions may be found in \cite{F1}. Therefore
$$\|y\|_F \leq \Big(\int_R\|F(it)\|_t d\mu_0(t)\Big)^{1-\theta}
\Big(\int_R\|F(1+it)\|_q d\mu_1(t)\Big)^{\theta}  \ \ \ \ \ \ \ \
\ \  (3),$$

Now for any $t$ in $S_0^{\infty}$,
$$F_j(it)=\frac{f(m_j)}{m_j} \Big(\frac{f(m_j)}{m_j^{1-1/q}}\Big)^{-it} (\sum_{i \in M_j}e_i)=
a_{j,t} u_j,$$ where $a_{j,t}$ has modulus $1$. Therefore
$$F(it)=\sum_{j=1}^n a_j a_{j,t} u_j,$$
and by Proposition \cite{KL}, $$\|F(it)\|_t \leq 1+\epsilon \ \ \
\ \ \ \ \ \ (4).$$  On the $\ell_q$-side we compute that
$$F_j(1+it)=\frac{1}{m_j^{1/q}}\Big(\frac{f(m_j)}{m_j^{1-1/q}}\Big)^{-it} \sum_{i \in M_j}e_i,$$
therefore $\|F_j(1+it)\|_q=1$, and since the vectors $F_j(1+it)$
are disjointly supported,
$$\|F(1+it)\|_q=n^{1/q}. \ \ \ \ \ \ \ \ \ \ \ \ (5)$$

Combining (3)(4) and (5),

$$\|y\|_F \leq (1+\epsilon)^{1-\theta} n^{\theta/q} \leq (1+\epsilon) n^{\theta/q}.$$

Applying (2) to each $v_j$ and considering the estimates obtained
for $F_j(it)$ and $F_j(1+it)$, we also obtain that

$$\|v_j\|_F \leq (1+\epsilon)^{1-\theta} 1^{\theta} \leq 1+\epsilon.$$
\end{proof}

We pass to the proof of Proposition \ref{type}\\

\noindent \emph{Proof of Proposition \ref{type}} First we observe
that since
 $p<q/\theta$ it follows from these estimates that $\mathcal F$ can never be near-Hilbert. To prove the estimates, first note that by \cite{F1} Proposition
1, for any successive sequence of normalized vectors
$x_1,\ldots,x_n$ in $\mathcal F$ we have that
$$\|x_1+\ldots+x_n\| \geq \frac{n^{1/p}}{f(n)^{1-\theta}}.$$
It follows that if $\mathcal F$ has type $t$ then $n^{1/p} \leq M
f(n)^{1-\theta} n^{1/t}$ for some constant $M$, and since
$\mathcal F$ is logarithmic, that $t \leq p$. Therefore $p(F) \leq
p$. On the other hand from Proposition \ref{upper}, we see
immediately that if $\mathcal F$ has cotype $c$ then $c$ must be
at least $q/\theta$, so $q(F) \geq q/\theta$. If $q \geq 2$ then
it follows from the inequality appearing in \cite{F1} Proposition
3 that the modulus of convexity in $\mathcal F$ has power type
$q/\theta$; from which by results of Figiel and Pisier \cite[Thm.
1.e.16]{lindtzaf}  $\mathcal F$ has cotype $q/\theta$. If $q \leq
2$ then the inequality in \cite{F1} provides modulus of power type
$2/\theta$ and therefore cotype $2/\theta$.\\

It only remains to show that $\mathcal F$ has type
$[1-(\theta/2)]^{-1}$ in case (1) and $p$ in case (2). So pick $n$
vectors $x_1,\ldots,x_n$ in $\mathcal F$ and without loss of
generality assume that they are finitely supported and non-zero.
By a result of \cite{5a}, see Theorem 2 of \cite{F1}, we may find
for each $x_j$ an interpolation function $F_j$  such that
$F_j(\theta)=x_j$, and such that almost everywhere in $t$,
$$\|F_j(it)\|_t=\|x_j\|\ {\rm\ \  and\ \ \ } \|F_j(1+it)\|_q=\|x_j\|.$$
Fixing $\lambda_j>0$ for each $j$, we define
$$G_j(z)=\lambda_j^{z-\theta}F_j(z),$$
and observe that $G_j(\theta)=x_j$ and that almost everywhere in
$t$,
$$\|G_j(it)\|_t=\lambda_j^{-\theta}\|x_j\|\ {\rm\ \  and\ \ \ } \|G_j(1+it)\|_q=\lambda_j^{1-\theta}\|x_j\|.$$
Let $\epsilon_j=\pm 1$ for each $j$. By the formula (2) and this
observation we have
$$\|\sum_j \epsilon_j x_j\|_F=\leq \Big(\int_R\|\sum_j \epsilon_j G_j(it)\|_t d\mu_0(t)\Big)^{1-\theta}
\Big(\int_R\|\sum_j \epsilon_j G_j(1+it)\|_q
d\mu_1(t)\Big)^{\theta}$$
$$\leq \Big(\sum_j \lambda_j^{-\theta}\|x_j\|\Big)^{1-\theta} \Big(\int_R\|\sum_j \epsilon_j G_j(1+it)\|_q d\mu_1(t)\Big)^{\theta}.$$
Therefore
$$\|\sum_j \epsilon_j x_j\|_F^{1/\theta} \leq
 \Big(\sum_j \lambda_j^{-\theta}\|x_j\|\Big)^{\frac{1-\theta}{\theta}} \Big(\int_R\|\sum_j \epsilon_j G_j(1+it)\|_q d\mu_1(t)\Big),$$
and
$$2^{-n}\sum_{\epsilon_j=\pm 1}\|\sum_j \epsilon_j x_j\|_F^{1/\theta} \leq
\Big(\sum_j
\lambda_j^{-\theta}\|x_j\|\Big)^{\frac{1-\theta}{\theta}}
\Big(\int_R 2^{-n} \sum_{\epsilon_j=\pm 1}\|\sum_j \epsilon_j
G_j(1+it)\|_q d\mu_1(t)\Big).$$ Now it is known that $\ell_q$ has
type $r=\min(2,q)$, therefore there is a constant $C_q$ such that
$$2^{-n}\sum_{\epsilon_j=\pm 1}\|\sum_j \epsilon_j x_j\|_F^{1/\theta} \leq C_q
\Big(\sum_j
\lambda_j^{-\theta}\|x_j\|\Big)^{\frac{1-\theta}{\theta}}
\Big(\int_R (\sum_j \|G_j(1+it)\|_q^r)^{1/r} d\mu_1(t)\Big)$$
$$\leq C_q \Big(\sum_j \lambda_j^{-\theta}\|x_j\|\Big)^{\frac{1-\theta}{\theta}}
\Big(\sum_j \lambda_j^{(1-\theta)r} \|x_j\|^r\Big)^{1/r}.$$
Picking each $\lambda_j$ of the form $\|x_j\|^\alpha$, $\alpha \in
\R$,
$$\Big(2^{-n}\sum_{\epsilon_j=\pm 1}\|\sum_j \epsilon_j x_j\|_F^{1/\theta}\Big)^{\theta} \leq C_q^{\theta}
\Big(\sum_j \|x_j\|^{1-\alpha\theta}\Big)^{1-\theta} \Big(\sum_j
\|x_j\|^{r+\alpha(1-\theta)r}\Big)^{\theta/r}.$$ Choosing $\alpha$
such that
$$1-\alpha\theta=r+\alpha(1-\theta)r,$$ or equivalently
$$\alpha=\frac{1-r}{\theta+(1-\theta)r},$$
we obtain
$$\Big(2^{-n}\sum_{\epsilon_j=\pm 1}\|\sum_j \epsilon_j x_j\|_F^{1/\theta}\Big)^{\theta} \leq C_q^{\theta} \Big(\sum_j \|x_j\|^{1-\alpha\theta}\Big)^{1-\theta+\theta/r}.$$
Letting $1/m=1-\theta+\theta/r$, it is immediate by the choice of
$\alpha$ that $1-\alpha\theta=m$, and therefore
$$\Big(2^{-n}\sum_{\epsilon_j=\pm 1}\|\sum_j \epsilon_j x_j\|_F^{1/\theta}\Big)^{\theta} \leq C_q^{\theta} \Big(\sum_j \|x_j\|^{m}\Big)^{1/m}.$$
Since $1/\theta>1$, by \cite[Thm. 1.e.13]{lindtzaf} this is enough
to deduce that $\mathcal F$ has type $m$. Now if $q \leq 2$, then
$m=p$ and $\mathcal F$ has type $p$; if $q \geq 2$ then
$1/m=1-\theta/2$ and $\mathcal F$ has type $[1-(\theta/2)]^{-1}$.
This concludes the proof of the proposition.\hfill $\square$\\

%The construction of $\mathcal F$ relies heavily on the technique
%of complex interpolation and so there is no direct definition of a
%real version of this space. However $\mathcal F$ itself may be
%seen as a real Banach space, and as such it is also uniformly
%convex, and as it satisfies the same type and cotype properties,
%is not an UFO. It may be interesting to note that $\mathcal F$ is
%also HI in this context. Indeed, given two $\R$-linear subspaces
%of $\mathcal F$, and real block subspaces $[y_n]$ and $[z_n]$
%approximating these two subspaces, then by the method in
%\cite{F1}, $\ell_p^{n+}$ vectors and RIS vectors are produced in
%$\mathcal F$ by {\em real} linear combinations of the block
%sequences $(y_n)$ and $(z_n)$. This means that \cite{F1} will
%provide arbitrarily close unit vectors in the {\em real} linear
%spans of $y_n$ and $z_n$, which implies that $\mathcal F$ is HI as
%r%eal. This is quite classical by now and was already mentioned in
%the case of the complex Gowers-Maurey's space, which is also HI
%when seen as real (see the remark of Maurey in \cite{M}  and
%Ferenczi \cite{F2} about this).

We conclude with the proof of Lemma \ref{lemma}.

\noindent \emph{Proof of Lemma \ref{lemma}} It is quite similar to the proof
of Lemma 3.3 of \cite{AS}, up to some change and simplification of
notation. By the definition of $\|\cdot\|_t$ in \cite{F1}, a
norming subset $B_t$ of the unit ball of $X_t^*$ is obtained by
the following inductive procedure. Let
$$D_1=\{\lambda_n e_n, n \in \N, |\lambda| \leq 1\}.$$ Given $D_{n-1}$ a subset of $c_{00}$,
 let $D_n^1$ be the set of functionals of the form $$z^*=E\sum_{i=1}^l \alpha_i z_i^*,$$ where
$\sum_{i=1}^l |\alpha_i| \leq 1$, $z_i^* \in D_{n-1}$ and $E$ is
an interval. Let $D_n^2$ be the set of functionals
  of the form
$$z^*=E(\frac{1}{f(l)}\sum_{i=1}^l  z_i^*),$$
where $z_i^* \in D_{n-1}$, $z_1^*<\cdots<z_n^*$, and $E$ is an
interval. Let $D_n^3$ be the set of functionals of the form
$$z^*=EG(it),$$
where $E$ is an interval of integers and $G$ a special analytic
function, therefore
$$G=\frac{1}{\sqrt{f(k)}^{1-z}k^{z-z/q}}(\sum_{i=1}^k  G_i),\ {\rm with\ } G_i=\frac{1}{f(m_i)^{1-z}m_i^{z-z/q}}(\sum_{j=1}^{m_i}G_{i,j}),$$
where $m_1=j_{2k}$ and $m_{j+1}=\sigma(G_1,\ldots,G_j)$.

 Then let $$D_n=D_n^1 \cup D_n^2 \cup D_n^3$$ and let $$B_t=\cup_{n=0}^{\infty}D_n.$$

The result stated in the lemma will be proved for $z^* \in D_n$ by
induction on $n$. For $n=0$, that is, $z^*=\lambda e_i^*$ where
$|\lambda|=1$, we have that $J(z^*)=\emptyset$,  and we just
define $T_0(z^*)=z^*$. Now assuming the conclusion is proved for
any functional in $D_n$, we need to prove it for any $z^*$ in
$D_{n+1}^1, D_{n+1}^2$ or $D_{n+1}^3$.\\

When $z^*=0$ we have $T_0(z^*)=0$ and define $T_j(z^*)=0$ for all
$j \in J$. Although $J(0)=\emptyset$, and therefore $T_j(0)$ does
not appear in the formula of statement of the lemma, notation will
be simplified by giving a value to any $T_j(0)$. We now turn our
attention to $z^* \neq 0$.\\

If $z^* \in D_{n+1}^1$, then $z^*$ has the form $E(\sum_{i=1}^l
\alpha_i z_i^*)$, where $z_i^* \in D_n$, $\sum_{i=1}^l |\alpha_i|
\leq 1$ and $E={\rm ran\ } z^*$. Then we may apply the formula of
\cite{AS},  Lemma 3.3, Case 1, using as they do the fact that
$\cup_{i=1}^l J(E z_i^*) \subset J(z^*)$. That is,
$$T_0(z^*)=\sum_{i=1}^l \alpha_i T_0(Ez_i^*),$$
and
$$T_j(z^*)=\sum_{1 \leq i \leq l, j \in J(Ez_i^*)} \alpha_i T_j(Ez_i^*),$$
for $j \in J(z^*)$ (this sum being possibly $0$ if $j$ belongs to
no $J(Ez_i^*)$).\\

If $z^* \in D_{n+1}^2$, that is $z^*=E(\frac{1}{f(l)}\sum_{i=1}^l
z_i^*)$, where the $z_i^*$ are successive in $D_n$, then we
observe that once again $\cup_{i=1}^l J(Ez_i^*) \subset J(z^*)$,
and also, by the injectivity of $\sigma$,  that $J(Ez_i^*) \cap
J(Ez_s^*) = \emptyset$ whenever $i \neq s$. We  therefore may
apply the formula of \cite{AS}, Lemma 3.3, Case 2:
$$T_0(z^*)=\frac{1}{f(l)}\sum_{i=1}^l T_0(Ez_i^*),$$
and
$$T_j(z^*)=\frac{1}{f(l)}T_j(Ez_i^*)$$
when $j$ belongs to some $J(Ez_i^*)$, or $$T_j(z^*)=0$$
otherwise.\\

Finally, if $z^* \in D_{n+1}^3$, then
$$z^*=E(\frac{1}{\sqrt{f(k)}}\sum_{i=1}^k  z_i^*),\ {\rm with\ }
z_i^*=\frac{1}{f(m_i)}\sum_{j=1}^{m_i}z_{i,j}^*,,$$ where
$m_1=j_{2k}$ and $m_{j+1}=\sigma(G_1,\ldots,G_j)$ for
$G_1,\ldots,G_l$ associated to $z_1^*,\ldots,z_l^*$ by
$z_j^*=G_j(it)$. Let
$$i_1=\min\{i \in \{1,\ldots,l\}: E \cap {\rm supp\ }(z_i^*) \neq \emptyset\}.$$
By the induction hypothesis, we have
$$Ez^*=\frac{1}{\sqrt{f(l)}}\Big(\frac{1}{f(m_{i_1})}\sum_{j=1}^{m_{i_1}}Ez_{i_1,j}^*+
\sum_{i=i_1+1}^l \frac{1}{f(m_i)}\sum_{j=1}^{m_i}Ez_{i,j}^*\Big)$$
$$=\frac{1}{\sqrt{f(l)}}\frac{1}{f(m_{i_1})}\sum_{j=1}^{m_{i_1}}T_0(Ez_{i_1,j}^*)$$
$$+\sum_{i=i_1+1}^l  \frac{1}{\sqrt{f(l)}}\frac{1}{f(m_i)} \sum_{j=1}^{m_i} T_0(Ez^*_{i,j})$$
$$+\sum_{i=i_1}^l \sum_{j=1}^{m_i} \Big(\sum_{k \in J(Ez_{i,j}^*)} \frac{1}{\sqrt{f(l)}}\frac{1}{f(m_i)}T_k(Ez_{i,j}^*)\Big).$$
We then set
$$T_0(Ez^*)=\frac{1}{\sqrt{f(l)}}\frac{1}{f(m_{i_1})}\sum_{j=1}^{m_{i_1}}T_0(Ez_{i_1,j}^*)$$
and after noting that, by injectivity of $\sigma$,
$\{m_{i_1+1},\ldots,m_l\}$ and $J(Ez_{i,j}^*)$, $i=i_1,\ldots,l$,
$j=1,\ldots,m_i$ are mutually disjoint subsets of $J(Ez^*)$
(possibly empty when $Ez_{i,j}^*=0$), we set
$$T_k(Ez^*)=\frac{1}{\sqrt{f(l)}}\frac{1}{f(m_i)} \sum_{j=1}^{m_i} T_0(Ez^*_{i,j}),$$
if $k=m_i$ for some $i$ in $\{i_1+1,\ldots,l\}$,
$$T_k(Ez^*)=\frac{1}{\sqrt{f(l)}}\frac{1}{f(m_i)}T_k(Ez_{i,j}^*),$$
if $k \in J(z_{i,j}^*)$ for some $i \in \{i_1, \ldots, l\}$ and $j
\in \{1,\ldots,m_i\}$, and
$$T_k(Ez^*)=0$$ if $k \in J(Ez^*)$ otherwise. It is then easy to see that the conclusion of the lemma is
satisfied. \hfill $\square$

\author{Jes\'us M. F. Castillo, Departamento de
Matem\'aticas, Universidad
 de Extremadura, Avda de Elvas s/n, 06011
Badajoz, Espa\={n}a. \email{castillo@unex.es}}\\

\author{Valentin Ferenczi, Departamento de Matem\'atica, Instituto de Matem\'atica e
Estat\'istica, Universidade de S\~{a}o Paulo, rua do Mat\~{a}o,
1010, 05508-090 S\~{a}o Paulo, SP, Brazil and  Equipe d'Analyse
Fonctionnelle, Institut de Math\'ematiques, Universit\'e Pierre et
Marie Curie - Paris 6, Case 247, 4 place Jussieu, 75252 Paris
Cedex 05, France.  \email{ferenczi@ime.usp.br}}\\

\author{Yolanda Moreno, Departamento de Matem\'aticas,
Escuela Polit\'ecnica de C\'aceres, Universidad
 de Extremadura, Avda de la Universidad s/n, 07011
C\'aceres, Espa\={n}a. \email{ymoreno@unex.es}}


\begin{thebibliography}{99}





\bibitem{androbean} G.~Androulakis and K.~Beanland, \emph{A hereditarily
indecomposable asymptotic $\ell_2$ Banach space}, Glasgow Math. J.
48 (2006) 503--532.


\bibitem{AS} G. Androulakis and Th. Schlumprecht,
{\em Strictly singular, non-compact operators exist on the space of Gowers and Maurey}, J. London Math. Soc. (2) {\bf 64} (2001), no. 3, 655--674.

\bibitem{anis} R. Anisca, \emph{On the ergodicity of Banach spaces with property
(H)}. Extracta Math. 26 (2011), 165-171.

\bibitem{abr} S. Argyros, K. Beanland and Th. Raikoftsalis,
\emph{A weak Hilbert space with few symmetries}, C.R.A.S. Paris,
ser I, 348 (2010) 1293-1296.

\bibitem{argdeli}
S.A.~Argyros and Deliyanni, \emph{Examples of asymptotically
$\ell_1$ Banach spaces}, Trans. Amer. Math. Soc. 349 (1997)
973--995.

\bibitem{afhorsz} S.A. Argyros, D. Freeman, R. Haydon, E. Odell, Th.
Raikoftsalis, Th. Schlumprecht and D. Zisimopoulou,
\emph{Embedding uniformly convex spaces into spaces with very few
operators}, J. Funct. Anal. 262 (2012) 825-849.


\bibitem{argyhay}
S.A.~Argyros and R.G.~Haydon, \emph{A hereditarily indecomposable
$\mathcal{L}_{\infty}$-space that solve the scalar-plus-compact
problem}, Acta Math. 206 (2011) 1-54.


\bibitem{arra} S.A. Argyros and T. Raikoftsalis, \emph{The cofinal property of the
reflexive indecomposable Banach spaces}. To appear in Ann. Inst.
Fourier (Grenoble).

\bibitem{argtol} S. Argyros and A. Tollias, \emph{Methods in the
theory of hereditarily indecomposable Banach spaces}, Mem. Amer.
Math. Soc. 806 (2004).

\bibitem{accgm} A. Avil\'es, F. Cabello, J. M. F. Castillo, M. Gonz\'alez and Y. Moreno,
\emph{ On separably injective Banach spaces}, Advances in
Mathematics, 234 (2013), 192--216.

\bibitem{avimore}
A.~Aviles and Y.~Moreno, {\it Automorphisms in spaces of
continuous functions on Valdivia compacta}, Topology Appl. 155
(2008) 2027--2030.
%\bibitem{B}
%J. Bourgain, \emph{A counterexample to a complementation problem},
%Compo. Math. 43 (1981) 133--144.
%\bibitem{bourdelb}
%J.~Bourgain and F.~Delbaen. \emph{A class of special
%$\mathcal{L}_\infty$ spaces.} Acta Math. 145 (1980) 155--176.

%\bibitem{bourpisi}
%J.~Bourgain  and G.~Pisier, A construction of ${\mathcal
%L}_\infty$-spaces and related Banach spaces, Bol. Soc. Bras. Mat.
%14 (1983) 109--123.

\bibitem{casjohn} P. G. Casazza and W. B. Johnson, \emph{An
example of an asymptotically Hilbertian space which fails the
approximation property}, Proc. Amer. Math. Soc. 129 (2001)
3017-3023


\bibitem{K} P. G. Casazza, N. J. Kalton, D. Kutzarova and M. Masty\l o,
{\em Complex interpolation and complementably minimal spaces},
Interaction between functional analysis, harmonic analysis, and
probability (Columbia, MO, 1994), 135--143, Lecture Notes in Pure
and Appl. Math., {\bf 175}, Dekker, New York, 1996.



\bibitem{CasazzaNi}
P.G.~Casazza and N.J.~Nielsen, \emph{The Maurey extension property
for Banach spaces with the Gordon-Lewis property and related
structures}, Studia Math. 155 (2003) 1--21.


\bibitem{casshu} P.G. Casazza, T.J. Shura, \textit{Tsirelson's space.} Lecture Notes in
Math. 1363 (1989) Springer-Verlag.

\bibitem{castgonz} J.M.F.Castillo and  M. Gonz\'alez, \emph{
Three-space problems in Banach space theory}, Springer Lecture
Notes in Math. 1667,  1997.

\bibitem{castmoreisr}
J.M.F.~Castillo and Y.~Moreno, \emph{On the
Lindenstrauss-Rosenthal theorem}, Israel J. Math. 140 (2004)
253--270.
%\bibitem{castmorestud}
%J.M.F.~Castillo, Y.~Moreno and J.~Su\'arez. \emph{On
%Lindenstrauss-Pe{\l}czy\'nski spaces.} Studia Math. 174 (2006),
%213--231.
%\bibitem{castmoresua}
%J.M.F.~Castillo, Y.~Moreno and J.~Su\'arez. \emph{On the structure
%of Lindenstrauss-Pe{\l}czy\'nski spaces.} Studia Math. 194 (2009)
%105-115.

\bibitem{castplic} J.M.F. Castillo and A. Plichko,
\emph{Banach spaces in various positions.} J. Funct. Anal. 259
(2010) 2098-2138.


\bibitem{5a} R. Coifman, M. Cwikel, R. Rochberg, Y. Sagher and G. Weiss, {\em Complex interpolation for families of Banach spaces}, Proceedings of Symposia in Pure Mathematics, Vol. 35, Part 2, American Mathematical Society, 1979, 269--282.
\bibitem{5b} R. Coifman, M. Cwikel, R. Rochberg, Y. Sagher and G. Weiss, {\em The complex method for interpolation of operators acting on  families of Banach spaces}, Lecture Notes in Mathematics {\bf 779}, Springer-Verlag, Berlin/Heidelberg/New York, 1980, 123--153.

\bibitem{deliman} I. Deliyanni and A. Manoussakis,
\emph{Asymptotic $\ell_1$ hereditarily indecomposable Banach
spaces}, Ill. J. Math. 51 (2007) 767-803


%\bibitem{ferenc} V.~Ferenczi, \emph{Operators on subspaces of hereditarily indecomposable %Banach spaces},
%Bull. London Math. Soc. 29 (1997)338-344.


\bibitem{F1} V. Ferenczi, {\em A uniformly convex hereditarily indecomposable Banach space}, Israel J. Math. {\bf 102} (1997), 199--225.
%\bibitem{F2}  V. Ferenczi {\em Uniqueness of complex structure and real hereditarily %indecomposable Banach spaces},
%Adv. Math. {\bf 213} (2007), no. 1, 462--488.

\bibitem{FH} V. Ferenczi and P. Habala, \emph{A uniformly convex Banach space whose subspaces fail the Gordon-Lewis property}
Arch. Math.71 (1998) 481--492.

\bibitem{fererose}  V. Ferenczi and Ch. Rosendal {\em Banach spaces without minimal subspaces} J. Func. Anal.
257 (2009) 149--193.

\bibitem{gale} E. Galego, personal communication


\bibitem{gkl} G. Godefroy, N.J. Kalton and G. Lancien,
\emph{Subspaces of $c_0(\N)$ and Lipschitz isomorphisms}, GAFA 10
(2000) 798-820.

\bibitem{Ghyperplanes} W.T. Gowers, \emph{A solution to Banach's hyperplane problem}, Bull. London Math. Soc. (1994) 26 523-530.

\bibitem{gowedicho} W.T. Gowers, \emph{An infinite Ramsey theorem and some Banach-space dichotomies}, Ann. Math.
156 (2002) 797-833.


\bibitem{GowersMaurey}
W.T.~Gowers and B.~Maurey, \emph{Banach spaces with small spaces
of operators}, Math. Ann. 307 (1997) 543--568.

\bibitem{GM} W.T. Gowers and B. Maurey, {\em The unconditional basic sequence problem}, J. Amer. Math. Soc. {\bf 6}, 4 (1993), 851--874.


\bibitem{james} R. C. James, \emph{Bases and reflexivity of Banach
spaces,} Ann. of Math. 52 (1950) 518-527.

\bibitem{johnson} W. B. Johnson, \emph{On finite dimensional
subspaces of Banach spaces with local unconditional structures,}
Studia Math. 51 (1974) 225-240.


\bibitem{johnpisi} W.B. Johnson and G.Pisier, \emph{The proportional UAP characterizes weak Hilbert spaces},
J. London Math. Soc. 44 (1991) 525-536.

\bibitem{johnszank} W.B. Johnson and T. Szankowski \emph{Hereditary approximation property},
Annals of Math. (to appear).


%\bibitem{kaltpeck} N.J. Kalton  and  N. T. Peck, \emph{ Twisted sums
%of sequence spaces and the three space problem}, Trans. Amer.
%Math. Soc. 255 (1979) 1-30.

\bibitem{KL} D. Kutzarova and P. K. Lin,  {\em Remarks about Schlumprecht space}, Proc. Amer. Math. Soc. {\bf 128} (2000), no. 7, 2059--2068.




\bibitem{lindmemo} J. Lindenstrauss, {\it On the extension of compact operators,}
Mem. Amer. Math. Soc. 48 (1964).

\bibitem{lindl1} J. Lindenstrauss, {\it On a certain subspace of
$l_1$}, Bull. Polish Acad. Sci. 12 (1964) 539-542.

\bibitem{lindrose}
J.~Lindenstrauss and H.P.~Rosenthal, \emph{Automorphisms in $c_0,
l_1$ and $m$}, Israel J. Math. 7 (1969) 227--239.

\bibitem{lindtzaf} J. Lindenstrauss  and L. Tzafriri, {\em Classical Banach spaces}, Springer-Verlag, New York, Heidelberg, Berlin, 1979.

\bibitem{M} B. Maurey, {\em Banach spaces with few operators}, Handbook of the geometry of Banach spaces, vol. 2, edited by W.B. Johnson and J. Lindenstrauss, Elsevier, Amsterdam, 2002.



\bibitem{moreplic} Y.~Moreno and A.~Plichko, \emph{On automorphic Banach spaces},
Israel J. Math. 169 (2009) 29--45.
%\bibitem{MilmanPisier} Milman Pisier

\bibitem{pelcrose} A. Pe\l czy\'nski and H. Rosenthal, \emph{Localization techniques in
$L_p$-spaces}, Studia Math. 52 (1975), 263-289.

\bibitem{PiVol}G.~Pisier, \emph{The volume of convex bodies and Banach space
geometry}, Cambridge Tracts in Math. 94, Cambridge Univ. Press,
Cambridge, 1989.

\bibitem{rei} O. Reinov, \emph{Banach spaces without the approximation property},
Funct. Anal. and its Appl. 16, (1982) 315-317; Doi:
10.1007/BF01077865.


\bibitem{S} Th. Schlumprecht {\em An arbitrarily distortable Banach space}, Israel Journal of
Mathematics {\bf 76} (1991), 81--95.

\bibitem{szan} A. Szankowski, \emph{Subspaces without the approximation property},
Israel J. Math. 30 (1978) 123--129.


\bibitem{tarb} M. Tarbard, \emph{Hereditarily indecomposable separable $\mathcal L_\infty$ with $\ell_1$ dual
having few operators, but not very few operators.} J. London Math.
Soc. 85 (2012) 737-764.


\bibitem{zipp} M. Zippin, {\em The separable extension
problem}, Israel J. Math. 26 (1977), 372-387.


\bibitem{Z} M. Zippin, \emph{Extension of bounded
linear operators}, in Handbook of the Geometry of Banach spaces
vol 2 (W.B. Johnson and J. Lindenstrauss eds.), Elsevier, 2003;
pp. 1703-1741.
\end{thebibliography}
\end{document}